\DeclareMathAlphabet{\mathpzc}{OT1}{pzc}{m}{it}
\newcommand{\ORD}{{\rm ORD}}
\newtheorem{theorem}{Theorem}[section]
\newtheorem{lemma}[theorem]{Lemma}
\newtheorem{proposition}[theorem]{Proposition}
\newtheorem{corollary}[theorem]{Corollary}
\newtheorem{definition}[theorem]{Definition}
\newtheorem{notation}[theorem]{Notation}
\newtheorem{claim}[theorem]{Claim}
\numberwithin{equation}{section}
\newtheorem{convention}[theorem]{Convention}
\newcommand{\cof}{{\rm cof}}
\newcommand{\dom}{{\rm dom}}
\newcommand{\M}{\mathcal{M}}
\newcommand{\rest}{\restriction}
\newcommand\axiom{\mathrm}
\newcommand\ZFC{\axiom{ZFC}}
\newcommand{\forces}{\Vdash}
\theoremstyle{remark}
\newtheorem{remark}[theorem]{Remark}
\let\qed@empty\openbox 
\def\@begintheorem#1#2[#3]{%
\deferred@thm@head{%
\the\thm@headfont\thm@indent
\@ifempty{#1}
{\let\thmname\@gobble}
{\let\thmname\@iden}%
\@ifempty{#2}
{\let\thmnumber\@gobble\global\let\qed@current\qed@empty}
{\let\thmnumber\@iden\xdef\qed@current{#2}}%
\@ifempty{#3}
{\let\thmnote\@gobble}
{\let\thmnote\@iden}%
\thm@swap\swappedhead
\thmhead{#1}{#2}{#3}%
\the\thm@headpunct\thmheadnl\hskip\thm@headsep
}\ignorespaces
}
\renewcommand{\qedsymbol}{%
\ifx\qed@thiscurrent\qed@empty
\qed@empty
\else
\fbox{\scriptsize\qed@thiscurrent}%
\fi
}
\renewcommand{\proofname}{%
Proof%
\ifx\qed@thiscurrent\qed@empty
\else
\ of \qed@thiscurrent
\fi
}
\xpretocmd{\proof}{\let\qed@thiscurrent\qed@current}{}{}
\newenvironment{proof*}[1]
{\def\qed@thiscurrent{\ref{#1}}\proof}
{\endproof}
\begin{document}


\title{On Indestructible Strongly Guessing Models}

\author{Rahman Mohammadpour}
\address{Mathematical Institute of the Polish Academy of Sciences (Gdansk branch), Antoniego Abrahama 18, 81-825, Sopot, Poland}
\email{rahmanmohammadpour@gmail.com}
\urladdr{https://sites.google.com/site/rahmanmohammadpour}

\author{Boban Veli\v{c}kovi\'{c}}
\address{Institut de Math\'ematiques de Jussieu - Paris Rive Gauche, Universit\'e Paris  Cit\'e 8 Place Aur\'elie Nemours, 75205 Paris Cedex 13, France}
\email{boban@math.univ-paris-diderot.fr}
\urladdr{http://www.logique.jussieu.fr/~ boban}

\keywords{Guessing model, Indestructible guessing model, Magidor model, Virtual model, Side conditions.}
\subjclass[2020]{03E35, 03E55, 03E57, 03E65.}

\begin{abstract} 
In  \cite{MV} we defined and proved the consistency of the principle ${\rm GM}^+(\omega_3,\omega_1)$ which implies that many consequences
of strong forcing axioms hold simultaneously at  $\omega_2$ and $\omega_3$. 
In this paper we formulate a strengthening of ${\rm GM}^+(\omega_3,\omega_1)$ that we call ${\rm SGM}^+(\omega_3,\omega_1)$.
We also prove, modulo the consistency of two supercompact cardinals, that ${\rm SGM}^+(\omega_3,\omega_1)$ is consistent with ZFC.
In addition to all the consequences of ${\rm GM}^+(\omega_3,\omega_1)$, the principle ${\rm SGM}^+(\omega_3,\omega_1)$,
together with some mild cardinal arithmetic assumptions that hold in our model,  implies that any forcing that adds a new subset of $\omega_2$ 
either adds a real or collapses some cardinal. This gives a partial answer to   a question of Abraham  \cite{AvrahamPhD} and extends a previous result of Todor\v{c}evi\'{c} \cite{Todorcevic82} in this direction. 

\end{abstract}

\maketitle

\section{Introduction}
One of the driving themes of research in set theory in recent years has been the search for higher forcing axioms. 
Since most applications of strong forcing axioms such as the Proper Forcing Axiom (PFA) and Martin's Maximum (MM) 
can be factored through some simple, but powerful combinatorial principles, it is  natural to look for higher cardinal versions of such principles. 
One such principle is ${\rm ISP}(\omega_2)$  which was formulated by  Viale  and Wei\ss~\cite{VW2011}.
They derived it from PFA and showed that it has a number of  structural consequences on the set-theoretic universe. 
For instance, it implies the tree property at $\omega_2$, and  the failure of $\square(\lambda)$, for regular $\lambda \geq \omega_2$.
Moreover, Krueger \cite{Krueger2020} showed that it implies the Singular Cardinal Hypothesis (SCH).
A strengthening of ${\rm ISP}(\omega_2)$ was studied by Cox and Krueger \cite{CK2017}.
They showed that this strengthening, which we refer to as ${\rm SGM}(\omega_2,\omega_1)$, is consistent with $2^{\aleph_0} >\aleph_2$. 
It is therefore a natural candidate for a generalization to higher cardinals. 
In \cite{MV} we introduced one such generalization,  ${\rm GM}^+(\omega_3,\omega_1)$, and showed that if there are two supercompact cardinals
then there is a generic extension $V[G]$ of the universe in which ${\rm GM}^+(\omega_3,\omega_1)$ holds. 
In addition to ${\rm ISP}(\omega_2)$, the principle ${\rm GM}^+(\omega_3,\omega_1)$ implies that the tree property also holds at $\omega_3$, and
that the approachability ideal $I[\omega_2]$ restricted to ordinals of cofinality $\omega_1$ is the non stationary ideal restricted to this set. 
The consistency of the latter was previously shown by Mitchell \cite{MI2009} starting from a model with a greatly Mahlo cardinal. 
In the current paper we further strengthen the principle ${\rm GM}^+(\omega_3,\omega_1)$ in the spirit of Cox and Krueger \cite{CK2017}.
We call this new principle ${\rm SGM}^+(\omega_3,\omega_1)$. We show its consistency  by a modification of the forcing notion from \cite{MV}, 
again starting from two supercompact cardinals. 
As a new application we show that ${\rm SGM}^+(\omega_3,\omega_1)$, together with some mild cardinal arithmetic assumptions that hold
in our model, implies that any forcing that adds a new subset of $\omega_2$  either adds a real or collapses some cardinal. 
This gives a partial answer to a question of Abraham  \cite{AvrahamPhD} and extends a previous result of Todor\v{c}evi\'{c} \cite{Todorcevic82} in this direction. 

The paper is organized as follows. In \S 2 we recall some preliminaries and background facts. In \S3 we state the main theorem 
and derive the application of ${\rm SGM}^+(\omega_3,\omega_1)$ to the problem of Abraham mentioned above. 
In \S4 we review the definitions on virtual models from \cite{MV}. 
In \S5 we describe the main forcing and prove the relevant facts about it. The pieces are then put together in \S 6 to derive the main theorem of this paper. 
We warn the reader that the current paper relies heavily on the concepts and results from \cite{MV}, hence having access to that paper is necessary
for the full understanding of our results.

\section{Preliminaries}

\subsection*{Special trees}
Let  $T=(T,\leq_T)$ be a tree of height $\omega_1$. Recall that $T$ is called \emph{special} if there is a function $f:T\to \omega$,
such that  for every $s<_T t$ in $T$, we have  $f(s)\neq f(t)$. 
If $T$ is a tree of height $\omega_1$, the standard forcing $\mathbb S(T)$ to generically  specialize $T$ 
consists of finite partial specializing functions, ordering by reverse inclusion. 
If $T$ has no cofinal branches then $\mathbb S(T)$ has the countable chain condition, see \cite{BaumMalRei}. 
We say that $T$ is \emph{weakly special} if there is a function $f:T \to \omega$
such that whenever $s,t,u\in T$ and $f(s)=f(t)=f(u)$, if $s<_T t$ and $s<_Tu$ then $t$ and $u$ are comparable. 
Baumgartner \cite[Theorem 7.3]{Baumgartnersurvey} proved that for a  tree $T$ of height $\omega_1$ 
which has no branches of length $\omega_1$, $T$ is special iff $T$ is weakly special. 
He also showed \cite[Theorem 7.5]{Baumgartnersurvey} that if every tree of height and size $\omega_1$ that has no branches
of length $\omega_1$ is special, then every tree of height and size $\omega_1$ that has at most $\omega_1$ branches is weakly special. 
We recall the following well-known fact, see \cite[Proposition 4.3]{CK2017} for a proof.

\begin{proposition}\label{sealing} 
If $V\subseteq W$ are models of set theory with $\omega_1^V=\omega_1^W$, $T\in V$ is a tree of  height $\omega_1$ which
is weakly special in $V$, then every  branch of $T$ in $W$ of length $\omega_1$ is in $V$. 
\end{proposition}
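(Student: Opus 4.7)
The plan is to use the coloring $f:T\to\omega$ witnessing weak specialness in $V$ to reconstruct the branch $b\in W$ from data lying in $V$. The key idea: the restriction of $f$ to $b$ must be constant on an uncountable set, and the weakly special property then forces that set to determine a chain already definable in $V$.

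First, working in $W$, I would apply the pigeonhole principle. Since $b$ has length $\omega_1^W=\omega_1^V$ and $f$ takes values in $\omega$, there exists $n\in\omega$ such that $A=\{t\in b : f(t)=n\}$ is uncountable. Pick any $s_0\in A$. Because the set of $<_T$-predecessors of $s_0$ is countable (being indexed by a countable ordinal, an absolute fact since the levels of $T$ are computed from $T$ alone), there are still uncountably many $t\in A$ with $s_0<_T t$.

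Next, I would pass back to $V$ and form the set
\[
C \;=\; \{s_0\}\cup \{t\in T : f(t)=n \text{ and } s_0 <_T t\},
\]
which belongs to $V$. Applying the weakly special property with the three elements $s_0,t,u$ (all of color $n$, with $s_0<_T t$ and $s_0<_T u$) shows that any two members of $C$ are $<_T$-comparable, so $C$ is a chain in $V$. Moreover $C\supseteq\{t\in A:s_0\leq_T t\}$, which is uncountable and lies along the branch $b$; hence $C$ meets $\omega_1$ many distinct levels of $T$ and so has order type $\omega_1$.

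Finally, let $D$ be the $<_T$-downward closure of $C$, again formed in $V$. Then $D\in V$ is a downward closed chain of length $\omega_1$, i.e.\ a cofinal branch of $T$ in $V$. Since $C\subseteq b$ and $b$ is downward closed in $T$, we have $D\subseteq b$. But $b$ has exactly one element at each level below $\omega_1$, and $D$, being downward closed of length $\omega_1$, meets every level below $\omega_1$; hence $D=b$ and therefore $b\in V$. The only delicate point is the pigeonhole step, which depends essentially on the hypothesis $\omega_1^V=\omega_1^W$ so that ``uncountable'' has the same meaning in both models; once $C$ is isolated in $V$, the remainder is routine.
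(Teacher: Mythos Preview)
Your argument is correct and is the standard proof of this fact. The paper itself does not give a proof; it simply cites \cite[Proposition 4.3]{CK2017}. One small point: you assert ``$C\subseteq b$'' without justification, but this does follow from what you have already shown --- since $C$ is a chain and $C\cap b$ is cofinal in the levels of $T$, any $t\in C$ lies below some $u\in C\cap b$, and downward closure of $b$ gives $t\in b$. With that one line added, the proof is complete.
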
 
\begin{proof}[\nopunct]
\end{proof}

\subsection*{Guessing models}	
Throughout this paper by a model $M$, we mean a set or a class  such that  $(M,\in)$ satisfies a sufficient fragment of ${\rm ZFC}$.  For a set or class $M$, we say that a set $x\subseteq M$ is {\em bounded in} $M$ if there is $y\in M$ such that $x\subseteq y$. Let us call a transitive model $R$  {\em powerful}  if it includes every set bounded in $R$. We say that  $x$ is \emph{guessed} in $M$ if there is $x^*\in M$ with $x^*\cap M=x\cap M$. Suppose that $\gamma$ is an $M$-regular cardinal. We say $x$ is \emph{$\gamma$-approximated} in $M$ if for  every $a\in M$ with $|a|^M<\gamma$, we have $a\cap x\in M$. We say that $M$ is \emph{$\gamma$-guessing in a model} $N$ if  for every $x\in N$ bounded in $M$, if $x$ is $\gamma$-approximated in $M$, then $x$ is guessed in $M$.
	
\begin{definition}[\cite{V2012}]\label{guessing-def} 
Let $\gamma$ be a regular cardinal. 
A set  $M$ is said to be $\gamma$-{\em guessing} if  $M$ is $\gamma$-guessing in $V$.
\end{definition}
	
Recall from  \cite{Hamkins2001} that  a pair $(M,N)$ of transitive models with $\gamma\in M\subseteq N$ has the 
$\gamma$-{\em approximation property}, if  $M$ is $\gamma$-guessing in $N$.

\begin{theorem}[Krueger \cite{Krueger2020}]\label{transitivity of approx}
Let $M_0\subseteq M_1\subseteq M_2$ be transitive models. If $(M_0,M_1)$ and $(M_1,M_2)$ have the $\omega_1$-approximation property, then so does $(M_0,M_2)$.
\end{theorem}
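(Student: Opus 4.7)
The plan is to deduce, for every $x \in M_2$ that is bounded in $M_0$ and $\omega_1$-approximated in $M_0$, that $x \in M_0$. The strategy is the natural two-stage reduction: first show $x \in M_1$, then apply the $(M_0, M_1)$-approximation property to conclude $x \in M_0$. The second stage is immediate once the first is achieved, since $x$ is bounded in $M_0$ and $\omega_1$-approximated in $M_0$ by hypothesis.

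For the first stage, the plan is to verify that $x$ is $\omega_1$-approximated in $M_1$ (noting that $x$ is automatically bounded in $M_1$, since $M_0 \subseteq M_1$) and then invoke the $(M_1, M_2)$-approximation property. Thus the core claim is: for every $a \in M_1$ with $|a|^{M_1} \leq \aleph_0$, $a \cap x \in M_1$. Fix $z \in M_0$ with $x \subseteq z$; by the transitivity of $M_0$, $x \subseteq z \subseteq M_0$, and replacing $a$ by $a \cap z \in M_1$, we may assume $a \subseteq z$. A tempting move is to apply $(M_1, M_2)$-approximation directly to $a \cap x \in M_2$ (which is bounded in $M_1$ by $a$), but verifying the hypothesis reduces to showing $(c \cap a) \cap x \in M_1$ for every countable $c \in M_1$, which is the same shape of problem and produces an infinite regress.

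The main obstacle, and the key step, is to break this circularity by exploiting the $(M_0, M_1)$-approximation property. My plan is to show that every countable $a \in M_1$ with $a \subseteq z$ is contained in some countable $\tilde a \in M_0$ (an $\omega_1$-covering consequence of $(M_0, M_1)$-approximation): once this is available, $a \cap x = a \cap (\tilde a \cap x)$, and since $\tilde a \cap x \in M_0 \subseteq M_1$ by the $\omega_1$-approximation of $x$ in $M_0$ and $a \in M_1$, we conclude $a \cap x \in M_1$. The technical heart of the proof is therefore the covering step. I expect this to require a contradiction argument in which an uncoverable $a$, after intersection with appropriate countable sets of $M_0$, produces a subset of $z$ which is in $M_1$, bounded in $M_0$, and $\omega_1$-approximated in $M_0$, yet not in $M_0$ — contradicting the $(M_0, M_1)$-approximation property — most likely under the mild assumption $\omega_1^{M_0} = \omega_1^{M_1} = \omega_1^{M_2}$, which is itself a standard consequence of the $\omega_1$-approximation property in the settings of interest.
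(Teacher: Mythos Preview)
The paper does not provide a proof of this theorem: the proof environment is empty and the result is simply attributed to Krueger. So there is no argument in the paper to compare your attempt against.

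Your two-stage strategy is the natural one, and your reduction to the covering step is correct: once every $M_1$-countable $a \subseteq z$ is contained in some $M_0$-countable $\tilde a \in M_0$, the identity $x \cap a = a \cap (x \cap \tilde a)$ immediately gives $x \cap a \in M_1$, and the rest follows as you say. Your side remark about $\omega_1^{M_0} = \omega_1^{M_1}$ is also correct, and it is not merely an extra assumption: if $\omega_1^{M_0}$ were countable in $M_1$, an increasing cofinal $\omega$-sequence $s \subseteq \omega_1^{M_0}$ in $M_1$ would satisfy $s \cap c$ finite for every $M_0$-countable $c$ (such $c$ are bounded in $\omega_1^{M_0}$), so $s$ would be $\omega_1$-approximated in $M_0$ but not in $M_0$, contradicting the hypothesis. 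The same argument shows more generally that no ordinal of $M_0$-uncountable cofinality acquires countable cofinality in $M_1$.

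The genuine gap is that you have not proved the covering step, and this is where the entire difficulty lies. Your sketch (``a contradiction argument in which an uncoverable $a$ \ldots\ produces a subset of $z$ which is \ldots\ $\omega_1$-approximated in $M_0$ yet not in $M_0$'') names the right target but gives no construction. The obstacle is real: for an uncoverable $a \in M_1$, the intersections $a \cap c$ with $M_0$-countable $c$ typically do \emph{not} lie in $M_0$ (since $M_1$ may have new reals), so $a$ itself is not $\omega_1$-approximated in $M_0$, and it is not clear what derived object would be. The easy cases (ordinals of $M_0$-uncountable cofinality, successors) reduce by the cofinality-preservation above and induction, but the case of a singular bound of countable $M_0$-cofinality does not yield to a straightforward induction: one can cover each $a \cap \lambda_n$ by an $M_0$-countable $b_n$, but the sequence $\langle b_n \rangle$ lives in $M_1$, and assembling an $M_0$-cover from it requires exactly the kind of approximation argument you have not supplied.

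In short, you have correctly located the heart of the matter but not carried it out. Consult Krueger's paper for the missing argument.
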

\begin{proof}[\nopunct]
\end{proof}

\begin{theorem}[Cox--Krueger  \cite{CK-Quotient}]\label{guessing vs approx}
Suppose that $R$ is a powerful model and  $M\prec R$.   Let also $\gamma$ be a regular cardinal in $M$ with $\gamma\subseteq M$. Then the following are equivalent.
\begin{enumerate}
\item $M$ is a $\gamma$-guessing model.
\item The pair $(\overline M,V)$ has the $\gamma$-approximation property.
\end{enumerate} 
\end{theorem}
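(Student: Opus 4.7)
The plan is to use the Mostowski collapse $\pi : M \to \overline{M}$ as a bridge between the two properties. A useful preliminary observation is that since $\gamma \subseteq M$, any $a \in M$ with $|a|^M < \gamma$ is in fact a subset of $M$: working inside $M$, pick a bijection $f : \delta \to a$ with $\delta = |a|^M < \gamma$; then every element of $a$ has the form $f(\alpha)$ with $\alpha < \gamma \subseteq M$ and $f \in M$, hence lies in $M$. Consequently for such $a$ one has $\pi(a) = \{\pi(u) : u \in a\}$ and $|\pi(a)|^{\overline{M}} = |a|^M < \gamma$. Also $\pi$ is the identity on $\gamma$.

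For the direction $(1) \Rightarrow (2)$, let $x \in V$ be $\gamma$-approximated in $\overline{M}$ and bounded in $\overline{M}$, say $x \subseteq \pi(z)$ with $z \in M$. Lift $x$ back to $M$ by setting $y = \{u \in z \cap M : \pi(u) \in x\}$, so $y \subseteq z$ is bounded in $M$. To verify that $y$ is $\gamma$-approximated in $M$, fix $a \in M$ with $|a|^M < \gamma$. By the preliminary observation $a \subseteq M$, so for $u \in a$, membership in $y$ is equivalent to $\pi(u) \in x$ (the requirement $u \in z$ is automatic, since $\pi(u) \in x \subseteq \pi(z)$ forces $u \in z$). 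Thus $a \cap y$ is precisely the $\pi$-preimage inside $a$ of $\pi(a) \cap x$. By $\gamma$-approximation of $x$ in $\overline{M}$, $\pi(a) \cap x \in \overline{M}$, and its $\pi$-preimage $w \in M$ satisfies $w \subseteq a \subseteq M$ and $w = a \cap y$. So $y$ is $\gamma$-approximated in $M$, hence by $(1)$ is guessed by some $y^* \in M$. A direct computation shows $\pi(y^*) = \{\pi(u) : u \in y^* \cap M\} = \{\pi(u) : u \in z \cap M,\ \pi(u) \in x\} = x$, so $x \in \overline{M}$ as required.

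For $(2) \Rightarrow (1)$, let $x \in V$ be $\gamma$-approximated in $M$ with $x \subseteq z \in M$. Push forward to $x' := \pi[x \cap M] \subseteq \pi(z) \subseteq \overline{M}$. For $b \in \overline{M}$ with $|b|^{\overline{M}} < \gamma$, write $b = \pi(a)$ with $a \in M$ and $|a|^M < \gamma$; since $a \subseteq M$, we compute $b \cap x' = \pi(a) \cap \pi[x \cap M] = \pi[a \cap x]$, and by $\gamma$-approximation of $x$ in $M$ we have $a \cap x \in M$, so $\pi[a \cap x] = \pi(a \cap x) \in \overline{M}$. Thus $x'$ is $\gamma$-approximated in $\overline{M}$, and by $(2)$ applied to the pair $(\overline{M},V)$ we get $x' \in \overline{M}$. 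Writing $x' = \pi(x^*)$ for some $x^* \in M$ and using injectivity of $\pi$ yields $x^* \cap M = x \cap M$, so $x$ is guessed in $M$.

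The main obstacle is purely bookkeeping: one must keep clear the distinction between an element $a \in M$, its collapse $\pi(a) \in \overline{M}$, and the pointwise image $\pi[a]$, and similarly be careful that a subset of $\overline{M}$ in $V$ need not itself lie in the range of $\pi$ on a single element. The hypotheses $M \prec R$ with $R$ powerful (which ensures the collapse is well defined and that elementarity transfers cardinality and the existence of bijections) and $\gamma \subseteq M$ (which forces $a \subseteq M$ whenever $|a|^M < \gamma$) are exactly what is needed to make these identifications go through cleanly.
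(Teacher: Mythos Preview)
Your proof is correct. The paper itself gives no proof of this theorem; it is stated with attribution to Cox--Krueger and followed by an empty proof environment, so there is nothing to compare against. Your argument via the Mostowski collapse, using $\gamma\subseteq M$ to ensure that every $a\in M$ with $|a|^M<\gamma$ satisfies $a\subseteq M$ (and hence $\pi(a)=\pi[a]$), is the standard one and all the bookkeeping is carried out cleanly.
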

\begin{proof}[\nopunct]
\end{proof}
	
\subsection*{Indestructible and special guessing models}	
The notion of an indestructible guessing model was introduced by Cox and Krueger  \cite{CK2017}. 
\begin{definition}
An $\omega_1$-guessing model $M$ of size $\omega_1$ is \emph{indestructible} if $M$ remains $\omega_1$-guessing in any outer 
universe with the same $\omega_1$.
\end{definition}
Let us first recall that a set $X$ of size $\omega_1$ is \emph{internally unbounded} if
$X\cap \mathcal P_{\omega_1}(X)$ is cofinal in $\mathcal P_{\omega_1}(X)$, and is \emph{internally club} if $X\cap \mathcal P_{\omega_1}(X)$ contains a closed unbounded  subset of $\mathcal P_{\omega_1}(X)$.
It is easily seen that if $M$ is an $\omega_1$-sized elementary submodel of a  powerful model $R$, then $M$ is internally unbounded (I.U.) if and only if there is an I.U. sequence for $M$, i.e., a  $\subseteq$-increasing and $\in$-sequence $\langle M_\alpha:\alpha<\omega_1\rangle$ of countable sets  with union $M$, and that $M$ is internally club (I.C.) if and only if there is an I.C. sequence for $M$, i.e. a $\subseteq$-continuous  I.U. sequence for $M$.
	
It was shown by  Krueger \cite{Krueger2020} that if $R$ is a powerful model and $M\prec R$ with $\omega_1\subseteq M$ is an  $\omega_1$-guessing model of size $\omega_1$, then
$M$ is internally unbounded. 

Let us fix an  I.U. sequence  $\langle M_\alpha:\alpha<\omega_1\rangle$ for an I.U. $\omega_1$-guessing model $M$ of size $\omega_1$. 
For every $\alpha<\omega_1$, let 
\[
T_\alpha(M)\coloneqq \{(z,f)\in M:  f:z\cap M_\alpha\rightarrow 2\}.
\]
Let also 
\[
T(M)=\bigcup_{\alpha<\omega_1}T_\alpha(M).
\]
We order $T(M)$ by $(z,f)\leq (z',f')$ if and only if $z=z'$ and $f\subseteq f'$.

Notice that $T(M)\subseteq M$, and hence $|T(M)|\leq\omega_1$. 
Clearly $T(M)$ is a tree of height $\omega_1$. 
Assume that
$b$ is an uncountable branch through $T(M)$. Then there is $z\in M$ such that for all $(z',f')\in b$, we  have $z'=z$. 
Let 
\[
F= \bigcup \{  f :  (z,f) \in b \}.
\]
Then $F:z \cap M \mapsto 2$ is a function which is $\omega_1$-approximated in $M$. 
Since $M$ is an $\omega_1$-guessing model, there is $F^*\in M$ with $F\rest (z\cap M)=F^*\rest (z\cap M)$.
Given $F^*$, we can recover the branch $b$, so $T(M)$ has at most $\omega_1$ branches. 
By  \cref{sealing} we now have the following  \cite[Proposition 4.4]{CK2017}.

\begin{proposition}[Cox-Krueger \cite{CK2017}]\label{tree-guessing}
Suppose  $M$ is an I.U.  $\omega_1$-guessing model of cardinality $\omega_1$.
Then there is a tree $T$ of size and height $\omega_1$ with $\omega_1$ cofinal branches
such that $T$ being weakly special implies that $M$ is an indestructible $\omega_1$-guessing. 
	
\end{proposition}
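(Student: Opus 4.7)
The plan is to verify that the tree $T(M)$ constructed in the paragraph preceding the statement is the desired tree. The construction gives a tree of height $\omega_1$, and since $T(M)\subseteq M$ with $|M|=\omega_1$, it has size $\omega_1$. The bound on branches is already established: any $\omega_1$-branch $b$ through $T(M)$ must concentrate on a single first coordinate $z$, and the amalgamated function $F=\bigcup\{f:(z,f)\in b\}$ is $\omega_1$-approximated in $M$, so by the $\omega_1$-guessing property there is $F^{*}\in M$ guessing $F$, and $b$ is recoverable from $F^{*}$.

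The substance of the proposition is the implication that weak specialness of $T(M)$ entails indestructibility. Let $W\supseteq V$ be any outer model with $\omega_1^{W}=\omega_1^{V}$, and suppose $x\in W$ is bounded in $M$, say $x\subseteq z$ with $z\in M$, and $x$ is $\omega_1$-approximated in $M$. I want to produce $x^{*}\in M$ with $x^{*}\cap M=x\cap M$. For each $\alpha<\omega_1$, define $f_{\alpha}:z\cap M_{\alpha}\to 2$ to be the characteristic function of $x\cap M_{\alpha}$. Because $z\cap M_{\alpha}\in M$ is countable in $M$ and $x$ is $\omega_1$-approximated in $M$, we get $x\cap(z\cap M_{\alpha})\in M$, so $f_{\alpha}\in M$ and therefore $(z,f_{\alpha})\in T_{\alpha}(M)$. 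The chain $b=\{(z,f_{\alpha}):\alpha<\omega_1\}$ is an $\omega_1$-branch of $T(M)$ lying in $W$.

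Now apply \autoref{sealing}: $T(M)$ is weakly special in $V$, $\omega_1^{V}=\omega_1^{W}$, so the branch $b$ already belongs to $V$. Working in $V$, set $F=\bigcup\{f:(z,f)\in b\}$, which is a function on $z\cap M$ agreeing with the characteristic function of $x$ on $z\cap M$. By the preamble argument, $F$ is $\omega_1$-approximated in $M$, so by the $\omega_1$-guessing property of $M$ in $V$ there exists $F^{*}\in M$ with $F^{*}\rest(z\cap M)=F\rest(z\cap M)$. Letting $x^{*}=\{a\in z:F^{*}(a)=1\}\in M$ yields $x^{*}\cap M=x\cap M$, which is exactly the guessing condition in $W$.

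The main point to watch is the construction of the branch $b$ in $W$ from the hypothesis that $x$ is $\omega_1$-approximated in $M$; this is where the fixed I.U.~sequence $\langle M_{\alpha}:\alpha<\omega_1\rangle\in V$ is essential, since it is what ensures the levels $f_{\alpha}$ lie in $M$. Once the branch has been produced in $W$, the sealing proposition immediately pulls it back into $V$, and the guessing property of $M$ (which holds in $V$ by assumption) converts it into the guessing set $x^{*}$. Thus no new guessing argument is required in $W$: the work is entirely shifted onto weak specialness of $T(M)$.
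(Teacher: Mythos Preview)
Your proof is correct and follows exactly the approach the paper intends: the paper constructs $T(M)$ in the paragraph preceding the proposition, observes it has at most $\omega_1$ branches, and then simply says the result follows from Proposition~\ref{sealing}, leaving the proof body empty. You have supplied precisely the details the paper omits---constructing the branch in $W$ from an $\omega_1$-approximated set, pulling it back to $V$ via weak specialness, and applying the guessing property in $V$.
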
    
\begin{proof}[\nopunct] 
\end{proof}

\subsection*{Strong properness}

\begin{definition}[Mitchell \cite{MI2005}]\label{strong-generic} Let $\mathbb P$ be a forcing notion and $A$ a set. We say that $p\in \mathbb P$
is {\em  $(A,\mathbb P)$-strongly generic} if for every $q\leq p$ there is a condition $q\rest  A\in A$ such that any $r\in A$ with
$r\leq  q\rest  A$   is compatible with $q$. 
\end{definition}

\begin{definition}
Let $\mathbb P$ be a forcing notion, and $\mathcal S$ be a collection of sets. 
We say that $\mathbb P$ is {\em strongly proper} for $\mathcal S$ if for every $A\in \mathcal S$ and $p\in A\cap \mathbb P$,
there exists an $(A,\mathbb P)$-strongly generic condition $q\leq p$. 
\end{definition}

A forcing notion $\mathbb P$ is called strongly proper, if   for every sufficiently large  regular cardinal $\theta$ there is a club $\mathcal C$ in $\mathcal P_{\omega_1}(H_\theta)$ such that  $\mathbb P$ is 
strongly proper for  $\mathcal C$.
\begin{definition}
A forcing notion $\mathbb  P$ has the $\gamma$-approximation property if for every $V$-generic filter $ G\subseteq \mathbb P$, the pair $(V,V[G])$ has the $\gamma$-approximation property.
\end{definition}
The following are standard.
\begin{lemma}\label{SP implies approx}
Every strongly proper forcing has the $\omega_1$-approximation property.
\end{lemma}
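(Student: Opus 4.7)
The plan is to follow the classical Mitchell-style argument. Let $\dot x$ be a $\mathbb{P}$-name, $p \in \mathbb{P}$, and $y \in V$ with $p \Vdash \dot x \subseteq \check y$ and $p \Vdash$ ``$\dot x$ is $\omega_1$-approximated in $V$''. The goal is to show $p \Vdash \dot x \in \check V$. I would argue by contradiction: suppose some $p' \leq p$ forces $\dot x \notin V$, and without loss of generality assume $p = p'$. Pick a regular $\theta$ large enough that $\mathbb{P}$ is strongly proper for a club $\mathcal C \subseteq [H_\theta]^\omega$, and choose a countable $A \in \mathcal C$ with $\mathbb{P}, p, \dot x, y \in A$. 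Let $q \leq p$ be an $(A, \mathbb{P})$-strongly generic condition.

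The key intermediate claim is that $q \Vdash \dot x \cap (A \cap \check y) = \check{\bar x}$ for a specific $\bar x \in V$ (in fact $\bar x \in A$). To establish this, I would combine three ingredients. (i) Since $\dot x$ is forced to be $\omega_1$-approximated in $V$ and $A \cap y \in V$ is countable, any generic $G$ containing $q$ yields $\dot x[G] \cap (A \cap y) \in V$. (ii) For each countable $c \in A$ with $c \subseteq y$, the set $E_c := \{s \in \mathbb{P} : \exists v \in V,\ s \Vdash \dot x \cap \check c = \check v\}$ is dense below $p$ and, by elementarity, lies in $A$ with witnesses $v$ chosen in $A$. (iii) Strong genericity of $q$ lets me pull, for any $r \leq q$, a condition $d \in A \cap E_c$ compatible with $r$ below the restriction $r \restriction A \in A$. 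Iterating (ii)--(iii) along an enumeration in $A$ of countable subsets of $y$, and invoking strong genericity again to glue the resulting descending sequence of conditions in $A \cap \mathbb{P}$ into a common refinement compatible with $q$, produces a coherent ground-model candidate $\bar x \in A$ forced by $q$ to equal $\dot x \cap (A \cap y)$.

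The contradiction is then extracted from the assumption $p \Vdash \dot x \notin V$: for any hypothetical $z \in V$ with $z \subseteq y$ the set of conditions forcing $\dot x \ne z$ is dense below $p$, so in particular there exists $r \leq q$ forcing $\dot x$ to differ from every candidate extending $\bar x$. But any such disagreement must occur outside $A$, and iterating the construction across a stationary family of countable submodels covering arbitrarily large initial segments of $y$ forces $\dot x$ to coincide with a coherently assembled $V$-object, contradicting $p \Vdash \dot x \notin V$. The main obstacle I anticipate is the chaining step: a descending sequence of conditions in a countable submodel $A$ need not admit a common lower bound inside $A$, and strong genericity of $q$ (rather than mere $A$-properness) is precisely what glues such a sequence into a single condition compatible with $q$. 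This is the step where strong properness, as opposed to properness, does the essential work.
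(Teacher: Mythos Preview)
Your setup is right---take a countable elementary $A \prec H_\theta$ containing the relevant objects and a strongly $(A,\mathbb P)$-generic $q \leq p$---but the execution of both the key claim and the contradiction is off.

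First, you cannot expect $q$ itself to decide $\dot x \cap (A \cap y)$; you need to extend $q$ to some $q' \leq q$ doing so. This is immediate: since $A \cap y$ is countable and $\dot x$ is forced to be $\omega_1$-approximated in $V$, some $q' \leq q$ and some $\bar x \in V$ satisfy $q' \Vdash \dot x \cap (A \cap y) = \check{\bar x}$. No iteration over countable $c \in A$ is needed, and crucially, your ``gluing'' step is a genuine misunderstanding: strong genericity does \emph{not} provide lower bounds for descending sequences in $A$. That is what $\sigma$-closure would give. What strong genericity gives you is only that every $r \in A$ extending $q' \restriction A$ is compatible with $q'$.

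Second, the contradiction is obtained from a single $A$, not from a stationary family of submodels. The correct step (and this is where strong, rather than ordinary, properness is used) is to show that $q' \restriction A \in A$ already decides $\dot x$. If not, then since $q' \restriction A, \dot x, y \in A$, elementarity yields $\zeta \in A \cap y$ and $r_0, r_1 \in A$ below $q' \restriction A$ with $r_0 \Vdash \zeta \in \dot x$ and $r_1 \Vdash \zeta \notin \dot x$. By strong genericity both $r_i$ are compatible with $q'$, so common extensions $s_i \leq q', r_i$ force contradictory information about $\zeta \in \dot x$---impossible, since $q'$ already decides $\dot x \cap (A \cap y)$ and $\zeta \in A \cap y$. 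Hence $q' \restriction A$ decides $\dot x$, contradicting $p \Vdash \dot x \notin V$. This is the argument the paper has in mind (it appears in full as Proposition~\ref{guessingbystronglyproper} in the companion paper); the lemma itself is stated without proof as standard.
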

\begin{proof}[\nopunct]
\end{proof}

\begin{lemma}
Suppose  $R$ is a powerful model, $M\prec R$ is an $\omega_1$-guessing model, and that  
$\mathbb P$ is a forcing with the $\omega_1$-approximation property.
Then $M$ remains  $\omega_1$-guessing in $V^{\mathbb P}$.  
\end{lemma}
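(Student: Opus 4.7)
The plan is to reduce to the $\omega_1$-approximation property of the Mostowski collapse of $M$, lift it through the forcing extension using Krueger's transitivity theorem, and then undo the collapse to recover the guessing property in $V[G]$.

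Let $\pi \colon M \to \overline{M}$ denote the Mostowski collapsing map. Since $R$ is powerful, $M \prec R$, and $\omega_1 \subseteq M$ (a standing assumption for $\omega_1$-guessing models), the Cox--Krueger theorem yields that $(\overline{M}, V)$ has the $\omega_1$-approximation property. By the hypothesis on $\mathbb{P}$, $(V, V[G])$ has the $\omega_1$-approximation property for every $V$-generic filter $G \subseteq \mathbb{P}$, so Krueger's transitivity theorem gives that $(\overline{M}, V[G])$ has the $\omega_1$-approximation property as well.

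Working now in $V[G]$, fix $x$ bounded in $M$ and $\omega_1$-approximated in $M$; we must produce $x^* \in M$ with $x^* \cap M = x \cap M$. Set $\overline{x} := \pi[x \cap M]$. Any bound $y \in M$ for $x$ collapses to a bound $\pi(y) \in \overline{M}$ for $\overline{x}$. To check $\omega_1$-approximation of $\overline{x}$ in $\overline{M}$, take $\overline{a} \in \overline{M}$ of $\overline{M}$-cardinality less than $\omega_1$ and set $a := \pi^{-1}(\overline{a}) \in M$. By elementarity $|a|^M < \omega_1$, and since $\omega_1 \subseteq M$ this forces $a \subseteq M$; thus $a \cap x \in M$ by the hypothesis on $x$, and since $a \cap x \subseteq M$, the collapse commutes with intersection, giving
\[
\overline{a} \cap \overline{x} \;=\; \pi[a] \cap \pi[x \cap M] \;=\; \pi(a \cap x) \;\in\; \overline{M}.
\]
Applying the $\omega_1$-approximation property of $(\overline{M}, V[G])$ then gives $\overline{x} \in \overline{M}$, and $x^* := \pi^{-1}(\overline{x}) \in M$ satisfies $x^* \cap M = x \cap M$ by the injectivity of $\pi$.

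The main obstacle is the careful translation between $\omega_1$-approximation for the non-transitive model $M$ and that for its transitive counterpart $\overline{M}$; this hinges on $\omega_1 \subseteq M$, which ensures that every element of $M$ of small $M$-cardinality is contained in $M$, so that $\pi$ commutes with intersection on such sets. Once this bookkeeping is in place, Krueger's transitivity theorem provides the real content.
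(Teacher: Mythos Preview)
Your proof is correct and follows the same approach as the paper, which simply says the lemma ``follows easily from'' Krueger's transitivity theorem and the Cox--Krueger equivalence. You have carried out exactly that: use Cox--Krueger to pass from $M$ being $\omega_1$-guessing to $(\overline{M},V)$ having the $\omega_1$-approximation property, apply transitivity to get $(\overline{M},V[G])$, and then translate back. The only difference is that where the paper would cite Cox--Krueger again for the final step, you unfold the argument by hand --- a reasonable choice, since in $V[G]$ the model $R$ need no longer be powerful, so it is not entirely clear that the theorem as stated applies; your direct verification sidesteps this concern.
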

\begin{proof}
It follows easily from \cref{transitivity of approx,guessing vs approx}.
\end{proof}

\subsection*{Magidor models}
The following definition is motivated by Magidor's reformulation of a supercompact cardinal, see \cite{MA1971}.

\begin{definition}[Magidor model, \cite{MV}]\label{Magidor models}
We say that  a model $M$  is a $\kappa$-{\em Magidor model} if, letting $\overline{M}$ be the transitive collapse of $M$ and $\pi$ the collapsing map, 
$\overline{M}=V_{\bar\gamma}$, for some $\bar\gamma <\kappa$ with ${\rm cof}(\bar\gamma) \geq \pi (\kappa)$, and $V_{\pi(\kappa)}\subseteq M$. 
If $\kappa$ is clear from the context, we omit it. 
\end{definition}

\begin{proposition}\label{Magidor-stationary} Suppose $\kappa$ is supercompact and $\mu >\kappa$ with ${\rm cof}(\mu) \geq \kappa$. 
Then the set of $\kappa$-Magidor models is stationary in $\mathcal P_{\kappa}(V_\mu)$.
\end{proposition}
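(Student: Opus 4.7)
The plan is a standard reflection argument from supercompactness. Fix an arbitrary function $F\colon [V_\mu]^{<\omega} \to V_\mu$; the goal is to produce some $M \in \mathcal{P}_\kappa(V_\mu)$ closed under $F$ that is a $\kappa$-Magidor model. By supercompactness of $\kappa$, choose $\lambda \geq |V_\mu|$ together with an elementary embedding $j\colon V \to N$ with critical point $\kappa$, $j(\kappa) > \lambda$, and $N^\lambda \subseteq N$; in particular $j \restriction V_\mu \in N$, so $M^\ast := j[V_\mu]$ lies in $N$.

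The core step is to verify, in $N$, that $M^\ast$ is a $j(\kappa)$-Magidor submodel of $V_{j(\mu)}^N$, of $N$-cardinality below $j(\kappa)$, and closed under $j(F)$. The transitive collapse of $M^\ast$ is $V_\mu$ via $\pi = (j\restriction V_\mu)^{-1}$, so $\overline{M^\ast} = V_{\bar\gamma}$ with $\bar\gamma = \mu < j(\kappa)$. Since $\kappa < \mu$ we have $\kappa \in V_\mu$, hence $j(\kappa) \in M^\ast$ with $\pi(j(\kappa)) = \kappa$. The closure $N^\lambda \subseteq N$ gives $\cof^N(\mu) = \cof^V(\mu) \geq \kappa = \pi(j(\kappa))$, and $j$ fixes $V_\kappa$ pointwise, so $V_\kappa = V_{\pi(j(\kappa))} \subseteq M^\ast$. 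The cardinality bound $|M^\ast|^N = |V_\mu|^V \leq \lambda < j(\kappa)$ is immediate. Finally, for closure under $j(F)$, any finite tuple in $M^\ast$ has the form $(j(y_1),\ldots,j(y_n))$ with $y_i \in V_\mu$, and elementarity gives
\[
  j(F)(j(y_1),\ldots,j(y_n)) \;=\; j\bigl(F(y_1,\ldots,y_n)\bigr) \;\in\; M^\ast.
\]

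Once the claim is verified, elementarity of $j$ applied to the first-order statement ``there exists $M \in \mathcal{P}_{\kappa}(V_\mu)$ that is a $\kappa$-Magidor model closed under $F$'' (parameters $\kappa,\mu,F$), which holds in $N$ with parameters $j(\kappa),j(\mu),j(F)$, transfers the existence back to $V$, producing the required $M$. Since $F$ was arbitrary, the set of such $M$ is stationary.

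I do not foresee a genuine obstacle. The one clause that needs attention is the cofinality requirement $\cof(\bar\gamma) \geq \pi(j(\kappa))$, which is precisely where the hypothesis $\cof(\mu) \geq \kappa$ is used; the collapse map $\pi$ translates the latter into the former. The supercompactness parameter $\lambda$ must be chosen large enough to simultaneously ensure $N^\mu \subseteq N$ (absoluteness of $\cof(\mu)$) and the required cardinality bound on $M^\ast$, but both are handled uniformly by taking $\lambda \geq |V_\mu|$.
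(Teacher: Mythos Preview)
Your proof is correct, but it takes a different route than the paper's. You use the usual supercompactness embedding $j:V\to N$ with $N^\lambda\subseteq N$, verify that $M^*=j[V_\mu]$ is a $j(\kappa)$-Magidor model in $N$ closed under $j(F)$, and then reflect the existential statement back to $V$ via elementarity.

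The paper instead invokes Magidor's characterization of supercompactness (Theorem~\ref{factMagidor} in \cite{MV}): for any $\gamma>\mu$ there are $\bar\kappa<\bar\gamma<\kappa$ and an elementary $j:V_{\bar\gamma}\to V_\gamma$ with critical point $\bar\kappa$, $j(\bar\kappa)=\kappa$, and $F\in j[V_{\bar\gamma}]$. Letting $\bar\mu$ satisfy $j(\bar\mu)=\mu$, the model $N=j[V_{\bar\mu}]$ is then \emph{directly} a $\kappa$-Magidor submodel of $V_\mu$ closed under $F$: its collapse is $V_{\bar\mu}$ with $\bar\mu<\kappa$, the collapse sends $\kappa$ to $\bar\kappa$, $V_{\bar\kappa}\subseteq N$, and $\cof(\bar\mu)\geq\bar\kappa$ follows from $\cof(\mu)\geq\kappa$ by elementarity.

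The difference is essentially one of direction. Your argument produces the Magidor model on the $N$-side and pulls back; the paper produces it on the $V$-side immediately, since the very definition of Magidor model is tailored to Magidor's reformulation. The paper's route avoids the auxiliary verifications that $V_\mu^N=V_\mu$ and $\cof^N(\mu)=\cof^V(\mu)$, and makes the name ``Magidor model'' self-explanatory. Your route has the advantage of using only the standard definition of supercompactness, with no appeal to Magidor's theorem.
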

\begin{proof}
See \cite[Proposition 3.21]{MV}.
\end{proof}

\subsection*{Some lemmata}

\begin{lemma}[Neeman \cite{GiltonNeeman}]\label{proper vs strong proper}
Suppose that $M$ is a $0$-guessing model which is sufficiently elementary in some transitive model $A$.
Suppose that $\mathbb P\in M$ is a forcing.
Then every  $(M,\mathbb P)$-generic condition is $(M,\mathbb P)$-strongly generic.
\end{lemma}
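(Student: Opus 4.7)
The plan is to prove the contrapositive: if no condition in $M$ serves as $q \upharpoonright M$, then $q$ is not $(M, \mathbb{P})$-generic. The obstruction will be a maximal antichain extracted, via the $0$-guessing hypothesis, from the set of conditions in $M$ that are incompatible with $q$.

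Fix $q \leq p$ and set $X = \{r \in M \cap \mathbb{P} : r \perp q\}$. Then $X \subseteq \mathbb{P} \cap M$, hence $X$ is bounded in $M$ by $\mathbb{P}$. Since $M$ is $0$-guessing and the condition of being $0$-approximated is vacuous (nothing has cardinality $<0$ in $M$), the guessing property applies to $X$ and yields some $X^* \in M$ with $X^* \cap M = X$; by intersecting with $\mathbb{P}$ we may take $X^* \subseteq \mathbb{P}$. Now suppose toward a contradiction that no $s \in M \cap \mathbb{P}$ has all its $M$-extensions compatible with $q$. Then for every $s \in M \cap \mathbb{P}$ there is $r \in M \cap \mathbb{P}$ with $r \leq s$ and $r \in X$, so $X = X^* \cap M$ is dense in $\mathbb{P} \cap M$. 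By the elementarity of $M$ in the ambient transitive model, the statement ``$X^*$ is dense in $\mathbb{P}$'' then holds externally as well.

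Working inside $M$, I would pick a maximal antichain $X' \subseteq X^*$; since $X^*$ is dense in $\mathbb{P}$, $X'$ is in fact a maximal antichain of $\mathbb{P}$, and $X' \in M$. The $(M, \mathbb{P})$-genericity of $p$ is inherited by $q \leq p$, so $X' \cap M$ is predense below $q$, and in particular some $a \in X' \cap M$ is compatible with $q$. But $a \in X' \subseteq X^*$ together with $a \in M$ gives $a \in X^* \cap M = X$, which by definition of $X$ means $a$ is incompatible with $q$ — a contradiction. Thus the desired $q \upharpoonright M$ exists.

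The main subtlety is recognizing that $0$-guessing is exactly the notion to apply: the $0$-approximation hypothesis is vacuous, so only boundedness of $X$ in $M$ is needed, which is immediate from $X \subseteq \mathbb{P}$. Once $X^* \in M$ is produced, the passage from density of $X$ in $M \cap \mathbb{P}$ to density of $X^*$ in $\mathbb{P}$ is a routine elementarity step, and the contradiction is then read off directly from the definition of $(M, \mathbb{P})$-genericity; no further combinatorial work is required.
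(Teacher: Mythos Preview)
Your argument is correct. The paper does not supply its own proof of this lemma (it is cited from \cite{GiltonNeeman} and left without proof), so there is nothing to compare against; your write-up is a clean, standard way to establish the result. The key point you identify is exactly right: $0$-guessing, read through the paper's \cref{guessing-def}, says that \emph{every} bounded subset of $M$ is $M$-guessed (the approximation hypothesis being vacuous), which lets you pull the externally defined incompatibility set $X=\{r\in M\cap\mathbb P: r\perp q\}$ into $M$ as some $X^*$, after which density and elementarity finish the job.
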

\begin{proof}[\nopunct]
\end{proof} 
	
We need the following 
	
\begin{lemma}\label{approx does not add guessing function}
Let $\mathbb P\in H_\theta$ be a forcing  with the $\omega_1$-approximation property. Assume that $\mathcal{P}(\mathbb P)\in H_\theta$.
Suppose  $M\prec H_\theta$ is countable and  $\mathbb P\in M$. Let $Z\in M$ and let  $f:M\cap Z\rightarrow 2$ be a function that is not guessed in  $M$.
If $p\in\mathbb P$ is $(M,\mathbb P)$-generic, then 
$$p\Vdash ``\check{f} \text{ is not guessed in } M[\dot{G}]."$$
\end{lemma}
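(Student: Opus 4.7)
The plan is to argue by contradiction, transporting the situation through the Mostowski collapse of $M$ and invoking the elementary version of the $\omega_1$-approximation property inside $\bar M$. Suppose some $q \leq p$ and $\mathbb{P}$-name $\dot{f}^* \in M$ satisfy $q \Vdash \dot{f}^*[\dot{G}] \cap M[\dot{G}] = \check{f}$. Fix a $V$-generic $G \ni q$ and let $F := \dot{f}^*[G] \in M[G]$; then $F \cap M[G] = f$, and because $M \subseteq M[G]$ and $f \subseteq M$, also $F \cap M = f$.

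The first ingredient is the standard identity $M[G] \cap V = M$, obtained from strong $(M,\mathbb{P})$-genericity, which for countable $M$ is delivered by \cref{proper vs strong proper}. With it in hand, for any $a \in M$ with $|a|^M < \omega_1^M$ we have $a \subseteq M$, and hence $a \cap f = a \cap F$ lies both in $M[G]$ (since $a, F \in M[G]$) and in $V$ (being a subset of $f \in V$), so $a \cap f \in M[G] \cap V = M$. Thus $f$ is $\omega_1$-approximated in $M$ in the internal sense.

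The main step passes to the Mostowski collapse. Let $\pi : M \to \bar M$ be the collapse and $\hat\pi : M[G] \to \bar M[\bar G]$ its canonical extension, with $\bar G := \pi''(G \cap M)$ an $\bar M$-generic filter on $\bar{\mathbb{P}} := \pi(\mathbb{P})$. Setting $\bar F := \hat\pi(F)$, the recursive definition of Mostowski collapses gives $\bar F = \pi''(F \cap M[G]) = \pi''(f) =: \bar f$, where $\bar f(\pi(a)) := f(a)$ for $a \in M \cap Z$. Consequently $\bar f \in \bar M[\bar G]$, $\bar f \subseteq \bar M$, is bounded in $\bar M$ by $\bar Z \times 2$, and is $\omega_1$-approximated in $\bar M$ because $\bar a \cap \bar f = \pi''(a \cap f) \in \bar M$ whenever $\bar a = \pi(a)$ with $a \in M$ being $M$-countable. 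By elementarity $\bar M$ thinks $\bar{\mathbb{P}}$ has the $\omega_1$-approximation property, so applying the principle to the pair $(\bar M, \bar M[\bar G])$ yields $\bar f \in \bar M$. Inverting $\pi$ gives $f \in M$, contradicting the hypothesis since $f$ would then trivially be guessed in $M$ by itself.

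The principal obstacle is the identity $M[G] \cap V = M$: the $\omega_1$-approximation property of $\mathbb{P}$ by itself is not formally strong enough, so one must upgrade $(M,\mathbb{P})$-genericity to strong genericity via \cref{proper vs strong proper}. Everything else is a routine exploitation of the naturality of the approximation property under the Mostowski collapse.
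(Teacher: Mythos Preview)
Your argument is essentially correct but takes a different route from the paper, and there are two minor slips worth flagging.

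First, the citation of \cref{proper vs strong proper} for the identity $M[G]\cap V=M$ is misplaced: that lemma requires $M$ to be $0$-guessing, which a countable $M\prec H_\theta$ is not. Fortunately the identity $M[G]\cap V=M$ is the standard consequence of ordinary $(M,\mathbb P)$-genericity, so nothing is lost---just drop the reference. Second, the final line ``inverting $\pi$ gives $f\in M$'' is not quite right: from $\bar f\in\bar M$ you obtain some $h\in M$ with $\pi(h)=\bar f=\pi''(f)$, hence $h\cap M=f$, i.e.\ $h$ \emph{guesses} $f$ in $M$. You do not get $f\in M$ itself (indeed $M\cap Z\notin M$ when $Z$ is uncountable), but the contradiction with the hypothesis is the same.

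The paper's proof avoids the Mostowski collapse entirely and applies the $\omega_1$-approximation property directly in $V$. Starting from a guess $g\in M[G]$ with $g\restriction(M\cap Z)=f$, one checks $g\restriction x\in M$ for each $x\in[Z]^\omega\cap M$; then elementarity of $M[G]$ in $H_\theta[G]$ gives that $g$ is countably approximated in $V$, so the approximation property of $\mathbb P$ in $V$ yields $g\in V$, whence $g\in M[G]\cap V=M$. This is shorter and sidesteps the need to argue that the internal approximation property of $\bar{\mathbb P}$ (a forcing statement in $\bar M$) transfers to the actual pair $(\bar M,\bar M[\bar G])$---a step you pass over but which, while true via the forcing theorem for countable transitive models, does require a word of justification. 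Your collapsed approach works, but the paper's is more economical.
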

\begin{proof}
Pick a $V$-generic filter  $G\subseteq\mathbb P$  containing $p$. Assume towards a contradiction that for some function $g:Z\rightarrow 2$ in $M[G]$, $g\rest{M\cap Z}=f$.  Suppose  $x\in [Z]^\omega\cap M$.  Then $x\subseteq M$ and hence $g\rest{x}=f\rest {x}\in V$ as   $M[G]\cap V= M$. We have that $g\rest{x} \in M$. Now by  the elementarity of $M[G]$ in $H_\theta[G]$ and the definability of $V$ in $V[G]$ (see \cite{Laver-def-V,Woodin-def-V},) we have  $g$ is countably approximated in $V$. Since $\mathbb P$ has
the $\omega_1$-approximation property, it follows that $g\in V$ and hence $g\in M$. Therefore $f$ is guessed in $M$, which is a contradiction. 
\end{proof}
	
\begin{lemma}\label{decision is guessed}
Let  $\mathbb P$ be a forcing. Assume  $\dot f:Z\rightarrow 2$ is a $\mathbb P$-name for a function and  $M$ is a model with $\dot f,Z\in M$. 
Let $p\in\mathbb P$ be an $(M,\mathbb P)$-generic condition that forces  $\dot f\rest{M\cap Z}=\check{g}$, for some function $g$.
If $g$ is guessed in $M$, then $p$ decides $\dot f$.
\end{lemma}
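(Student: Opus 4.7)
The plan is to prove that $p \Vdash \dot f = \check g^*$, where $g^* \in M$ is the set witnessing that $g$ is guessed in $M$. By the elementarity of $M$ in a sufficiently large $H_\theta$ containing the parameters $\mathbb P, \dot f, Z$, we may assume $g^*$ is a function $Z \to 2$ in $M$ with $g^* \restriction (M \cap Z) = g$; showing $p \Vdash \dot f = \check g^*$ is then what it means for $p$ to decide $\dot f$.

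Suppose toward a contradiction that some $q \leq p$ and $z \in Z$ satisfy $q \Vdash \dot f(\check z) \neq \check g^*(\check z)$. Consider the $\mathbb P$-name
\[
\dot B = \{\, z \in Z : \dot f(z) \neq g^*(z)\,\} \in M,
\]
so that $q \Vdash \dot B \neq \emptyset$, and define
\[
A = \{\, r \in \mathbb P : r \Vdash \dot B = \emptyset \ \text{ or }\ (\exists z' \in Z)\, r \Vdash \check z' \in \dot B \,\} \in M.
\]
The set $A$ is open and dense in $\mathbb P$, by a routine application of the maximum principle combined with density of decision.

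By $(M, \mathbb P)$-genericity of $p$, the set $A \cap M$ is predense below $p$, so some $q^* \in A \cap M$ is compatible with $q$. Since $q \Vdash \dot B \neq \emptyset$, any common extension of $q$ and $q^*$ also forces $\dot B \neq \emptyset$, which rules out the first alternative in the definition of $A$. Hence, by the elementarity of $M$, there exists $z' \in M \cap Z$ with $q^* \Vdash \check z' \in \dot B$. A common extension $r \leq q,\, q^*$ then forces $\dot f(\check z') \neq \check g^*(\check z') = \check g(\check z')$, the last equality because $z' \in M \cap Z$. On the other hand $r \leq p$ and $z' \in M \cap Z$, so by the hypothesis on $p$ we also have $r \Vdash \dot f(\check z') = \check g(\check z')$, the desired contradiction.

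The main obstacle is the model-theoretic manipulation of $M$: promoting the guess $g^*$ to a total function $Z \to 2$, verifying that $\dot B$ and $A$ remain in $M$, and pulling the decision witness $z'$ down into $M \cap Z$ all require a modest amount of elementarity of $M$ in an ambient $H_\theta$. Once these are granted, the proof is a standard application of $(M, \mathbb P)$-genericity against the open dense set $A$.
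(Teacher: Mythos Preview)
Your proof is correct and follows essentially the same approach as the paper's: pick a witness $g^*\in M$ (the paper calls it $h$) for the guess, define in $M$ a dense set of conditions that either confirm $\dot f=g^*$ or pin down a specific point of disagreement, use $(M,\mathbb P)$-genericity to meet this set in $M$, and derive a contradiction since the disagreement point lies in $M\cap Z$. Your dense set $A$ is a mild refinement of the paper's $D$ in that you explicitly include the ``$\dot B=\emptyset$'' alternative, making $A$ globally dense rather than merely dense below $q$; this is a cosmetic improvement, not a genuinely different route.
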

\begin{proof}
Suppose that $h\in M$ is a function so that $h\rest  M\cap Z=g$, we will show that $p\Vdash ``\dot f=\check h"$. Fix $q\leq p$.  We claim that there is a condition below $q$ that forces $\dot f=\check h$.
If not, the set
$$D=\{r\in\mathbb P:\exists\zeta\in Z \text{ such that } r\Vdash \check h(\zeta)\neq \dot f(\zeta)\}$$
that belongs to $M$ is pre-dense below $q$.
Since  $q$ is  $(M,\mathbb P)$-generic, 
there is $r\in D\cap M$  compatible with $q$. So there exists $\zeta\in M\cap Z$ such that $r\Vdash ``\dot f(\zeta)\neq \check h(\zeta)=\check g(\zeta)"$. This is a contradiction, since $r$ is  compatible with $q$ and  $q$ forces $\dot f(\zeta)=\check g(\zeta)$.
\end{proof}

\begin{lemma}\label{Baumgartner}
Assume  $M\prec H_\theta$ is countable and  $Z\in M$. Suppose that $z\mapsto f_z$
is a function on $[Z]^\omega$ in $M$ so that
$f_z$ is a function with $z\subseteq{\rm dom}(f_z)$. 
Let  $f: M\cap Z\rightarrow 2$ be a function  that is not guessed in $ M$.
Let $B\in M$ be  a cofinal subset of $[Z]^\omega$. Then  there is  $B^*\in M$ which is a  cofinal subset of $B$ such that for every $z\in B^*$, $f_z\nsubseteq f$.
\end{lemma}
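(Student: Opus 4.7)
The plan is to argue by contradiction. Suppose no cofinal $B^{*}\in M$ of the required form exists; equivalently, every cofinal $C\subseteq B$ with $C\in M$ contains some $z$ with $f_{z}\subseteq f$. I will use this to define, inside $M$, a function that guesses $f$ on $M\cap Z$, contradicting the hypothesis on $f$.

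First I would make a harmless reduction to the case where each $f_{z}$ for $z\in B$ takes values in $2$. The set $B'=\{z\in B:\ran(f_{z})\subseteq 2\}$ lies in $M$. If $B\setminus B'$ is cofinal in $B$, we are already done, since every $z\in B\setminus B'$ trivially satisfies $f_{z}\not\subseteq f$; otherwise, by the standard fact that if a union of two subsets of $[Z]^{\omega}$ is cofinal then at least one summand is, $B'$ itself is cofinal, and I replace $B$ by $B'$.

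Next, for each $\zeta\in Z$ and each $i\in 2$, I would consider in $M$ the set
$$C^{i}_{\zeta}:=\{z\in B:\zeta\in\dom(f_{z})\text{ and }f_{z}(\zeta)=i\},$$
and define a function $f^{*}:Z\to 2$ in $M$ by setting $f^{*}(\zeta)=0$ if $C^{0}_{\zeta}$ is cofinal in $[Z]^{\omega}$ and $f^{*}(\zeta)=1$ otherwise. For $\zeta\in M\cap Z$ the union $C^{0}_{\zeta}\cup C^{1}_{\zeta}=\{z\in B:\zeta\in\dom(f_{z})\}$ is cofinal in $[Z]^{\omega}$, since any $y\in[Z]^{\omega}$ can be enlarged to contain $\zeta$ and then extended within $B$. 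By the same union trick as above, at least one of $C^{0}_{\zeta},C^{1}_{\zeta}$ is cofinal, hence $C^{f^{*}(\zeta)}_{\zeta}$ is cofinal and lies in $M$. By the contradiction hypothesis, there is $z\in C^{f^{*}(\zeta)}_{\zeta}$ with $f_{z}\subseteq f$, and therefore $f(\zeta)=f_{z}(\zeta)=f^{*}(\zeta)$. Thus $f^{*}\in M$ guesses $f$ on $M\cap Z$, contradicting the assumption that $f$ is not guessed in $M$.

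The main conceptual step is identifying the right definable family $\{C^{i}_{\zeta}\}$ and noticing that, under the contradictory hypothesis, cofinality of $C^{i}_{\zeta}$ inside $M$ already pins down the value $f(\zeta)=i$; everything else is a routine verification together with the small initial reduction on the range of the $f_{z}$.
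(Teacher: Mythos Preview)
Your proof is correct and follows essentially the same approach as the paper: both arguments hinge on the sets $C_\zeta^i$ (the paper calls them $A_\zeta^\epsilon$) and on defining a would-be guessing function from the pattern of their cofinality. The paper presents this as a direct case split (either some $\zeta$ has both $A_\zeta^0$ and $A_\zeta^1$ cofinal, or else the cofinality pattern defines a function $h\in M$ that must differ from $f$ somewhere), while you wrap the same mechanism in a proof by contradiction and define $f^*$ uniformly; your explicit reduction to $\ran(f_z)\subseteq 2$ is a small extra care that the paper leaves implicit.
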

\begin{proof}
For every $\zeta\in Z$, let
$$A_\zeta^\epsilon=\{z\in B: f_z(\zeta)=\epsilon\}, \text{ where } \epsilon=0,1.$$
Note that the sequence 
$$\langle A_\zeta^\epsilon:\zeta\in Z,\epsilon\in\{0,1\}\rangle$$
belongs to $M$.
We are done if there is some $\zeta\in Z$ such that both $A_\zeta^0$ and $A_\zeta^1$ are cofinal in $B$, as then one can find by elementarity such
$\zeta\in  M\cap Z$ and let $B^*=A_\zeta^{1-f(\zeta)}$.
Therefore, let us assume that  for every $\zeta\in  Z$, there is a unique $\epsilon\in\{0,1\}$ 
such that  $A_\zeta^\epsilon$ is cofinal in $B$.
Now define  $h$ on $ Z$ by letting $h(\zeta)$ be $\epsilon$ if and only if 
$A_\zeta^\epsilon$ is cofinal in $[Z]^\omega$. Obviously  $h$ belongs to  $M$. But 
$h\rest{M\cap Z}\neq f$,
since $f$ is not guessed in $M$. So there must exist $\zeta\in  M\cap Z$ such
that $h(\zeta)\neq f(\zeta)$, which in turn implies that $A_\zeta^{1-f(\zeta)}$ is a cofinal subset of $B$ and  belongs to $M$.
Let $B^*$ be $A_\zeta^{1-f(\zeta)}$.
\end{proof}

\begin{definition}\label{spec-poset} Let $T$ be a tree of height $\omega_1$. 
We say that $p \in \mathbb S(T)$ if 	
$p$ is a partial function,  ${\rm dom}(p)$ is a finite subset of $T$, 
$p : {\rm dom}(p)\to \omega$, and if  $x,y\in {\rm dom}(p)$ are distinct and $p(x)=p(y)$, then $x$ and $y$ are incomparable in $T$. 
The order on $\mathbb S (T)$ is the reverse inclusion. 
\end{definition} 
It is well-known that if $T$ has no uncountable branches, then $\mathbb S(T)$ has the ccc. We also need the following fact which 
was shown by Chodounsk\'y-Zapletal \cite{ChoZap}. We provide a proof for completeness. 

\begin{proposition}[Chodounsk\'y--Zapletal \cite{ChoZap}] \label{spec-poset-approx} Suppose $T$ is a tree of height $\omega_1$ without uncountable branches. 
Then $\mathbb S(T)$ has the $\omega_1$-approximation property. 
\end{proposition}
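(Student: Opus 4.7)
The plan is to proceed by contradiction. Suppose $\dot{A}$ is an $\mathbb{S}(T)$-name for a subset of an ordinal $\lambda$, that $p_0 \Vdash \dot{A}$ is $\omega_1$-approximated, and, for contradiction, that some $p \leq p_0$ forces $\dot{A} \notin V$. The aim is to derive a contradiction with the hypothesis that $T$ has no uncountable branches.

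First I would fix a continuous $\in$-increasing chain $\langle M_\xi : \xi < \omega_1\rangle$ of countable elementary submodels of a large $H_\theta$, with $p, \dot{A}, T, \mathbb{S}(T)\in M_0$ and $\bigcup_\xi M_\xi \supseteq \lambda$; write $\delta_\xi := M_\xi\cap\omega_1$. By the $\omega_1$-approximation property, $p\Vdash\dot{A}\cap M_\xi\in V$ for every $\xi$, and the ccc of $\mathbb{S}(T)$ (which holds because $T$ has no uncountable branch) yields for each $\xi$ a countable set $\mathcal{G}_\xi\subseteq V$ of possible values of $\dot{A}\cap M_\xi$ below $p$. I would then package $\bigsqcup_\xi \mathcal{G}_\xi$ into a tree $U\in V$ of height $\omega_1$ with countable levels, by declaring $g<_U g'$ (for $g\in\mathcal{G}_\xi$, $g'\in\mathcal{G}_{\xi'}$, $\xi<\xi'$) iff $g = g' \upharpoonright M_\xi$. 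In any $V$-generic $G \ni p$, the sequence $B_G := \langle \dot{A}[G]\cap M_\xi : \xi<\omega_1\rangle$ is an uncountable branch of $U$, and $B_G\notin V$ since otherwise $\dot{A}[G]=\bigcup B_G \in V$, contradicting the choice of $p$. Hence $\mathbb{S}(T)$ would have added a fresh uncountable branch to the ground-model tree $U$.

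The key step, and the main obstacle, is to convert this new branch of $U$ into an uncountable branch of $T$ itself. For each $\xi$, fix a witness $r_\xi \in G$ with $r_\xi\Vdash \dot{A}\cap M_\xi = B_G(\xi)$. A reflection argument using $B_G\notin V$ produces a stationary $S\subseteq\omega_1$ on which $B_G(\xi)\notin M_\xi$; for such $\xi$, elementarity forbids $r_\xi\in M_\xi$, so $\dom(r_\xi)$ contains some $T$-node at level $\geq\delta_\xi$. A $\Delta$-system and pigeonhole refinement on $\xi\in S$ then isolates, for each $\xi\in S$, a ``new'' node $t_\xi\in\dom(r_\xi)\setminus M_\xi$ together with a common skeleton and color-pattern across $S$. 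The main work is to show that the mutual compatibility of all $r_\xi\in G$, combined with the same-color incomparability rule of $\mathbb{S}(T)$-conditions and the $\omega_1$-approximation hypothesis, forces $\{t_\xi : \xi\in S\}$ into a cofinal chain of $T$, contradicting the hypothesis.

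The delicate point is precisely this last extraction: the incomparability clause of $\mathbb{S}(T)$-conditions pushes naturally towards uncountable antichains rather than chains, so one must identify the right family of essential nodes to track inside each $r_\xi$ (for instance, immediate successors of a $\Delta$-system root, chosen so that their colors force comparability through compatibility). Making this combinatorial bookkeeping precise, and explicitly using the approximation property to constrain how $r_\xi$'s decisions on $\dot{A}\cap M_\xi$ propagate through the finite-support structure of $\mathbb{S}(T)$, is where I expect the bulk of the technical difficulty to lie.
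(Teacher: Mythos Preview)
Your reduction is sound up to the point where you have a ground-model $\omega_1$-tree $U$ (with countable levels) to which $\mathbb S(T)$ adds a fresh cofinal branch; this is indeed equivalent to failure of the $\omega_1$-approximation property. The problem is the final step, where you try to extract an uncountable chain in $T$ from the witnesses $r_\xi\in G$. After a $\Delta$-system and color refinement, the ``new'' nodes $t_\xi\in\dom(r_\xi)\setminus M_\xi$ with a fixed color $n$ are forced by pairwise \emph{compatibility} in $G$ to be pairwise \emph{incomparable} in $T$, not comparable: if $t_\xi<_T t_{\xi'}$ and $r_\xi(t_\xi)=r_{\xi'}(t_{\xi'})=n$, then $r_\xi\cup r_{\xi'}$ is not a condition. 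So the mechanism you describe produces an uncountable antichain, which is no contradiction. Your suggestion to look at ``immediate successors of a $\Delta$-system root, chosen so that their colors force comparability through compatibility'' does not repair this: compatibility in $\mathbb S(T)$ never forces comparability of distinct nodes. You have correctly located the obstacle but not overcome it.

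The paper's argument is structurally different and supplies exactly the missing idea. It works with a \emph{single} countable $M\prec H_\theta$, a condition $q$ deciding $\dot f\restriction M$ to be some $g$ not guessed in $M$, and an assignment $x\mapsto(p_x,g_x)$ in $M$ with $p_x\restriction x=q\restriction M$ and the same number of ``high'' nodes. Lemma~\ref{Baumgartner} gives a cofinal $C^*\in M$ with $g_x\nsubseteq g$, so $p_x\perp q$ for $x\in C^*\cap M$. The crucial step is then the Claim: for each pair $(i,j)$ of indices into the high parts of $q$ and $p_x$, one applies Lemma~\ref{Baumgartner} \emph{again}, this time to the characteristic function of $\mathrm{pred}_T(t(i))$ restricted to $M$; either it is not guessed in $M$ (and the lemma directly yields a refinement on which $t(i),t_x(j)$ are incomparable), or it is guessed, whence $\mathrm{pred}_T(t(i))\cap M$ is a countable chain coded in $M$, and a short argument again gives incomparability. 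Iterating over all $(i,j)$ produces $x\in M$ with $p_x$ and $q$ compatible, a contradiction. The point is that the tree structure of $T$ enters via the \emph{predecessor functions} $h_{t(i)}$, not via colors of conditions in a generic; this is the ingredient your approach lacks.
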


\begin{proof}
Suppose $\mu$ is a cardinal and $\dot{f}$ is an $\mathbb S(T)$-name for a function from $\mu$ to $2$ which is countably approximated in $V$. 
It suffices to show that some condition in $\mathbb S(T)$ decides $\dot{f}$.  
Fix a sufficiently large regular cardinal $\theta$ and a countable $M\prec H_\theta$ containing all the relevant objects. 
Since $\dot{f}$ is forced to be countably approximated in $V$ we can pick a condition $p \in \mathbb S(T)$ and a function $g: M \cap \mu \to 2$
such that $p$ forces that $\dot{f}\rest (M \cap \mu) = \check{g}$. Since $\mathbb S(T)$ is ccc, $p$ is $(M,\mathbb S(T))$-generic. 
Therefore, by  \cref{decision is guessed} we may assume that $g$ is not guessed in $M$. 
Let  $d= {\rm dom}(p)$, let $d_0 = d\cap M$, and $p_0 = p \rest d_0$.
Let $n=| d\setminus d_0|$. 
Let $C$ be the collection of $x\in [T \cup \mu]^\omega$ such that $d_0\subseteq x$ and $T \cap x$ is an initial segment of $T$ under $\leq_T$.
If $x\in C\cap M$, then there is a condition $p_x\leq p_0$, and a function $g_x: \mu \cap x \to 2$  such that $p_x \rest x = p_0$, $| p_x \setminus p_0 | =n$, and $p_x$ forces
that $\dot{f}\rest (x \cap \mu)$ equals $g_x$. This is witnessed by the condition $p$. By the elementarity of $M$ such a pair 
$(p_x,g_x)$ exists, for all $x\in C$. Moreover, we can pick the assignment $x\mapsto (p_x,g_x)$ in $M$. Let $d_x= {\rm dom}(p_x)$. 
By  \cref{Baumgartner} we can find $C^*\in M$, a cofinal subset of $C$ under inclusion such that $g_x\nsubseteq g$, for all $x\in C^*$. 
This implies that $p_x$ and $p$ are incompatible, for all $x\in C^*\cap M$.   
Let us fix an enumeration $d\setminus d_0=\{ t(0),\ldots, t(n-1) \}$, and, for each $x\in C^*$, an enumeration $d_x \setminus d_0 = \{ t_x(0), \ldots, t_x(n-1) \}$.

\begin{claim} Let  $B\in M$ be a cofinal subset of $C^*$, and $i,j<n$. Then there is  $B^*\in M$, a cofinal subset of $B$, 
such that for every $x\in B^* \cap M$, $t(i)$ and $t_x(j)$ are incomparable.  
\end{claim} 
\begin{proof} For $t\in T$, let ${\rm pred}_T(t)$ be the set of predecessors of $t$ in $T$, and let $h_t$ be the characteristic function of ${\rm pred}_T(t)$
as a subset of $T$. Let $h= h_{t(i)}\rest (T \cap M)$. For $x\in B$ let $h_x= h_{t_x(j)} \rest (T\cap x)$. Suppose first that $h$ is not guessed in $M$. 
Then by  \cref{Baumgartner} we can find $B^*\in M$, a cofinal subset of $B$, such that $h_x\nsubseteq h$, for all $x\in B^*$. 
This implies that if $x\in B^*\cap M$ then $t(i)$ and $t_x(j)$ do not have the same predecessors in $T\cap x$ and hence are incomparable, as desired. 
Suppose now that $h$ is guessed in $M$ and let $k\in M$ be a function such that $ k \rest (T\cap M)=h$. Since $h^{-1}(1)$ is a chain and is closed downwards in $T$, 
the same is true for $k^{-1}(1)$.  $T$ has no uncountable chains, so $k^{-1}(1)$ is countable and hence we must have $k^{-1}(1)\subseteq M$.
Moreover, $k^{-1}(1)= {\rm pred}_T(t(i))\cap M$. By the elementarity of $M$ there is $s\in M$ such that $k^{-1}(1) = {\rm pred}_T(s)$.
Now, let $B^*$ be the set of $x\in B$ such that $s\in x$. Then $B^*\in M$ is a cofinal subset of $B$. 
If $x\in B^*\cap M$, then since $t_x(j)\notin x$, it follows that  $t_x(j)\notin {\rm pred}_T(s)$. It follows that $t_x(j)$ is not below $t(i)$. 
Also, $t_x(j)$ cannot be above $t(i)$ in $T$, since then  $t_x(j)$ and all its predecessors in $T$ are in $M$ and $t(i)$ is not in  $M$.
It follows that $B^*$ is as required. 
\end{proof} 

Now, applying the above claim repeatedly for all pairs $i,j <n$ we can find a cofinal subset $B$ of $C^*$ such that, for  all $x\in B\cap M$, 
every element of $d_x\setminus d_0$ is incomparable with every element of $d\setminus d_0$.  It follows that if $x\in B\cap M$
then $p_x \cup p \in \mathbb S(T)$, which is a contradiction. 

\end{proof}

\section{The main theorems}

We  recall the guessing model principle introduced by  Viale  and Wei\ss~\cite{VW2011}.
We let	${\rm GM}(\kappa,\gamma)$ be the statement that ${\rm GM}(\kappa,\gamma, H_\theta)$ holds,
for all sufficiently large regular $\theta$, where ${\rm GM}(\kappa,\gamma, H_\theta)$ states that
\[
\mathfrak G_{\kappa,\gamma}(H_\theta) = \{ M\in {\mathcal P}_\kappa(H_\theta): M\prec H_\theta  \mbox{ and } M \mbox{ is $\gamma$-guessing} \} 
\]
is stationary in ${\mathcal P}_\kappa(H_\theta)$.

\begin{definition}[Cox--Krueger \cite{CK2017}]
$\rm SGM(\omega_2,\omega_1)$ states that for every sufficiently large   regular cardinal $\theta$ 
the following set is stationary in $\mathcal P_{\omega_2}(H_\theta)$.
\[
\mathfrak G_{\omega_2,\omega_1}(H_\theta) = \{ M\in {\mathcal P}_{\omega_2}(H_\theta): M\prec H_\theta
\mbox{ and } M \mbox{ is indestructibly $\omega_1$-guessing} \}
\]		
\end{definition}
	
Notice that it was proved in \cite{CK2017} that Suslin's Hypothesis follows from $\rm SGM(\omega_2,\omega_1)$, but not from $\rm GM(\omega_2,\omega_1)$.

Let us now recall the notion of a strongly $\gamma$-guessing model introduced and studied by the authors in \cite{MV}.

\begin{definition}\label{Strong guessing}
Let   $\gamma \leq \kappa$  be regular uncountable cardinals. A model  $M$ of cardinality $\kappa^+$
is called {\em strongly $\gamma$-guessing} if it is the union of an $\in$-increasing chain $\langle M_\xi : \xi <\kappa^+\rangle$
of $\gamma$-guessing models of cardinality $\kappa$ such that $M_\xi= \bigcup \{ M_\eta : \eta < \xi\}$, for every $\xi$ of cofinality $\kappa$. 
\end{definition}
It was proved in Remark 2.14 of \cite{MV} that every strongly $\gamma$-guessing model is $\gamma$-guessing.
	
\begin{definition}\label{GM+}
${\rm GM}^+(\kappa^{++},\gamma)$ is the statement that ${\rm GM}^+(\kappa^{++},\gamma, H_\theta)$ holds,
for all sufficiently large regular $\theta$, where ${\rm GM}^+(\kappa^{++},\gamma, H_\theta)$ states that
\[
\mathfrak G^+_{\kappa^{++},\gamma}(H_\theta) = \{ M\in {\mathcal P}_{\kappa^{++}}(H_\theta): M\prec H_\theta  \mbox{ and } M \mbox{ is strongly $\gamma$-guessing} \}. 
\] 
is stationary in ${\mathcal P}_{\kappa^{++}}(H_\theta)$.
\end{definition}

\begin{definition}
A model  $M$ of cardinality $\omega_2$
is {\em indestructibly strongly $\omega_1$-guessing} if it is the union of an increasing chain $\langle M_\xi : \xi <\omega_2\rangle$
of indestructibly $\omega_1$-guessing models of cardinality $\omega_1$ such that 
$M_\xi= \bigcup \{ M_\eta : \eta < \xi\}$, for every $\xi$ of cofinality $\omega_1$. 
\end{definition}

\begin{definition}
The principle ${\rm SGM}^+(\omega_3,\omega_1)$ states that $\mathfrak S^+_{\omega_3,\omega_1}(H_\theta)$ is stationary, 
for all large enough $\theta$, where
$$\mathfrak G^+_{\omega_3,\omega_1}(H_\theta)\coloneqq 
\{ M\in \mathcal P_{\omega_3}(H_\theta) : M\prec H_\theta \text{ is   indestructibly  and strongly } \omega_1\text{-guessing}\}.$$
\end{definition}

\begin{definition}
For a regular cardinal $\kappa$, we let  $\rm{ MP(\kappa^+)}$ denote the statement that		
every forcing that adds a new subset of $\kappa^+$ whose initial segments are in the 
ground model  collapses some cardinal $\leq 2^{\kappa}$.
\end{definition} 
Todor\v{c}evi\'{c} \cite{Todorcevic82} showed that if every tree of size and height $\omega_1$ with at most $\omega_1$ cofinal branches  is weakly special and $2^{\aleph_0} < \aleph_{\omega_1}$ 
then $\rm MP(\omega_1)$ holds.  The principle was also studied  by Golshani and Shelah in \cite{GolShe}, where they showed that ${\rm MP}(\kappa^+)$ is consistent, for every regular cardinal $\kappa$.

\begin{proposition}[Cox--Krueger \cite{CK2017}]
If  $\rm SGM(\omega_2,\omega_1)$ and $2^{\aleph_0}< \aleph_{\omega_1}$ then  ${\rm MP}(\omega_1)$ holds.
\end{proposition}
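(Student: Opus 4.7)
The plan is to derive from $\mathrm{SGM}(\omega_2,\omega_1)$ the hypothesis of Todor\v{c}evi\'{c}'s theorem cited just above the proposition, namely that every tree of size and height $\omega_1$ with at most $\omega_1$ cofinal branches is weakly special. By Baumgartner \cite[Theorem 7.5]{Baumgartnersurvey}, this in turn reduces to showing that every Aronszajn tree of size $\omega_1$ is special under $\mathrm{SGM}(\omega_2,\omega_1)$.

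Fix such an Aronszajn tree $T$. The specializing forcing $\mathbb S(T)$ is ccc and has the $\omega_1$-approximation property by \cref{spec-poset-approx}. Using $\mathrm{SGM}(\omega_2,\omega_1)$, pick an indestructibly $\omega_1$-guessing $M \prec H_\theta$ of size $\omega_1$ with $\omega_1 \subseteq M$ and $T,\mathbb S(T) \in M$; then $T \subseteq M$, and by indestructibility $M$ remains $\omega_1$-guessing in any $V$-generic extension by $\mathbb S(T)$. Let $G$ be such a generic and $f = \bigcup G$; since $T \cup \omega \subseteq M$ we have $f \subseteq M$, whence $f \cap M = f$.

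The main step is to show that $f$ is $\omega_1$-approximated in $M$ inside $V[G]$. Fix a countable $a \in M$. By the $\omega_1$-approximation property of $\mathbb S(T)$, the set $D_a$ of conditions deciding $\dot f \cap a$ is dense in $\mathbb S(T)$, and $D_a \in M$ by elementarity. Let $A \subseteq D_a$ be a maximal antichain with $A \in M$; since $\mathbb S(T)$ is ccc and $\omega_1 \subseteq M$, $A$ is countable and $A \subseteq M$. Genericity produces $p \in G \cap A \subseteq G \cap D_a \cap M$; this $p$ decides $f \cap a = v$ for some $v$, and by elementarity $v \in M$, so $f \cap a \in M$. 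The guessing property of $M$ in $V[G]$ now yields $f^* \in M$ with $f^* \cap M = f$, and since $f^* \subseteq T \times \omega \subseteq M$ this forces $f^* = f$, giving $f \in M \subseteq V$. As this holds for every $V$-generic $G$, the trivial condition forces $\dot f \in \check M$; by ccc the Boolean values $[[\dot f = \check g]]$ for $g \in M$ partition into countably many nonzero pieces, and each such $g$ is a specialization of $T$ in $V$ (specialization being first-order in $g$ and $T$). Hence $T$ is special in $V$.

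The main obstacle is the upgrade from ``$f \cap a \in V$'' to ``$f \cap a \in M$''; this is where indestructible guessing is genuinely used, and it hinges on the fact that the ccc of $\mathbb S(T)$ combined with $\omega_1 \subseteq M$ makes every condition automatically $(M,\mathbb S(T))$-generic, so that maximal antichains in $M$ are already contained in $M$. Having established that every Aronszajn tree of size $\omega_1$ is special, Baumgartner's theorem promotes this to weak specialization of every tree of size $\omega_1$ with at most $\omega_1$ cofinal branches, and Todor\v{c}evi\'{c}'s theorem combined with $2^{\aleph_0} < \aleph_{\omega_1}$ then delivers $\mathrm{MP}(\omega_1)$.
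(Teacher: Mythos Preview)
Your argument has a genuine gap in the step where you claim that the set $D_a$ of conditions deciding $\dot f\cap a$ is dense in $\mathbb S(T)$. The $\omega_1$-approximation property says only that if a name is \emph{forced to be} countably approximated in $V$ then it lies in $V$; it does not guarantee that arbitrary names have their countable pieces decided. In fact the generic specialization $f$ is \emph{not} countably approximated in $V$: for any $\alpha<\omega_1$ the set $a=T_\alpha\times\omega$ is countable and belongs to $M$, and $f\cap a$ determines $f\restriction T_\alpha$. Since $T_\alpha$ is an antichain, the values $f(t)$ for $t\in T_\alpha$ are mutually generic, so $f\restriction T_\alpha$ is a Cohen-generic function $T_\alpha\to\omega$ and is not in $V$. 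Thus $D_a$ is empty, $f$ is not $\omega_1$-approximated in $M$, and you cannot invoke the guessing property to pull $f$ back into $V$.

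The paper gives no proof of this proposition (it is cited from Cox--Krueger), but the intended argument is the direct one whose $\omega_2$ analogue is spelled out in the proof of \cref{nice application}. Suppose $x\subseteq\omega_1$ is new with all initial segments in $V$, and no cardinal $\le 2^{\aleph_0}$ is collapsed; then $\omega_1$ is preserved. Set $\mathfrak X=\{x\cap\gamma:\gamma<\omega_1\}$. A cofinality/pigeonhole argument using $2^{\aleph_0}<\aleph_{\omega_1}$ (exactly as in \cref{nice application}, one cardinal down) produces $M\in V$ of $V$-size $\omega_1$ with $|M\cap\mathfrak X|=\omega_1$ in the extension. Cover $M$ by an indestructibly $\omega_1$-guessing $N\prec H_\theta$ of size $\omega_1$ with $\omega_1\subseteq N$. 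Now $x$ \emph{is} $\omega_1$-approximated in $N$: given countable $a\in N$, pick $\gamma'<\omega_1$ with $a\cap\omega_1\subseteq\gamma'$ and $x\cap\gamma'\in M\subseteq N$, so $x\cap a=(x\cap\gamma')\cap a\in N$. Indestructibility then gives $x\in N\subseteq V$, a contradiction. The crucial difference from your approach is that the object fed to the guessing model (the new subset of $\omega_1$ with ground-model initial segments) is genuinely countably approximated there, whereas the generic specialization is not.
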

\begin{proof}[\nopunct]
\end{proof} 
	
We shall prove that $\rm SGM^+(\omega_3,\omega_1)$ implies ${\rm MP}(\omega_2)$, and since ${\rm SGM}(\omega_2,\omega_1)$ follows from
${\rm SGM}^+(\omega_3,\omega_1)$, we get the simultaneous consistency of ${\rm MP}(\omega_1)$ and ${\rm MP}(\omega_2)$.

\begin{theorem}\label{nice application}
Suppose that $V\subseteq W$ are transitive models of $\rm ZFC$. 
Assume  that $\rm SGM^+(\omega_3,\omega_1)$ 
and $2^{\omega_1}<\aleph_{\omega_{2}}$ hold in $V$. 
Suppose that $W$ has a  subset of $\omega^V_2$ that does not belong to $V$.
Then either $\mathcal P^V(\omega_1)\neq \mathcal P^W(\omega_1)$ or  some $V$-cardinal  $\leq 2^{\omega_1}$ is no longer a cardinal in $W$.
\end{theorem}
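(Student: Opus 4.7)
Argue by contradiction: suppose $W \supseteq V$ contains $X \in W \setminus V$ with $X \subseteq \omega_2^V$, $\mathcal P^V(\omega_1) = \mathcal P^W(\omega_1)$, and every $V$-cardinal $\le 2^{\omega_1}$ preserved. In $V$, pick (by $\operatorname{SGM}^+(\omega_3,\omega_1)$ plus standard reflection) an $M \prec H_\theta$ of size $\omega_2$ that is indestructibly strongly $\omega_1$-guessing, with $\omega_2 \subseteq M$. Write $M = \bigcup_{\xi<\omega_2} M_\xi$ with each $M_\xi$ indestructibly $\omega_1$-guessing of size $\omega_1$, the chain being $\subseteq$-continuous at cofinality $\omega_1$; fix I.U. sequences $\langle M_\xi^\alpha : \alpha<\omega_1\rangle$ for each $M_\xi$. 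The target is $X \cap M \in V$, which together with $\omega_2 \subseteq M$ yields $X \in V$, contradicting the choice of $X$.

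For each $\xi$, first show $X \cap M_\xi \in V$. Each $X \cap M_\xi^\alpha$ is a countable set of ordinals, coded via a $V$-bijection $M_\xi^\alpha \cap \omega_2 \to \omega$ as a real; since $\mathcal P^V(\omega_1) = \mathcal P^W(\omega_1)$ rules out new reals, $X \cap M_\xi^\alpha \in V$. The $\omega_1$-sequence $\langle X \cap M_\xi^\alpha : \alpha<\omega_1\rangle$ then codes, via a $V$-bijection $\omega_1 \times M_\xi \to \omega_1$, as a subset of $\omega_1$ in $W$, and hence is in $V$ by hypothesis; so too is its union $X \cap M_\xi$. Next, by Proposition~\ref{tree-guessing} the tree $T(M_\xi)$ witnesses indestructibility of $M_\xi$: under the construction underlying $\operatorname{SGM}^+$, $T(M_\xi)$ is weakly special in $V$, and by Proposition~\ref{sealing} it acquires no new cofinal branches in $W$. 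The characteristic function $\chi_{X \cap M_\xi \cap \omega_2}$ defines a cofinal branch through $T(M_\xi)$, which therefore already lies in $V$; applying $\omega_1$-guessing of $M_\xi$ gives $F^*_\xi \in M_\xi$ with $F^*_\xi \cap (M_\xi \cap \omega_2) = X \cap M_\xi \cap \omega_2$.

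Finally, assemble: $\mathrm{cof}(\omega_1)$-continuity of the chain forces $F^*_\xi$ at those levels to be determined by $\langle F^*_\eta : \eta<\xi\rangle$, while at the remaining $\omega_2$-many stages the values live in a candidate space of size $\le 2^{\omega_1} < \aleph_{\omega_2}$. Applying the $\omega_1$-guessing of the large model $M$ to the $V$-computable family of such candidate $\omega_2$-sequences recovers $\langle F^*_\xi : \xi<\omega_2\rangle$ in $V$, and hence $X \cap M = \bigcup_\xi F^*_\xi \cap (M_\xi \cap \omega_2) \in V$.

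The main obstacle is in the middle paragraph: verifying that $\chi_{X \cap M_\xi \cap \omega_2}$ truly gives rise to a cofinal branch through $T(M_\xi)$, i.e., that each truncation pair $(\omega_2, \chi_X \restriction (M_\xi^\alpha \cap \omega_2))$ belongs to $M_\xi$. Here one must combine elementarity of $M_\xi$ with the fact, already established, that the whole $\omega_1$-sequence of truncations is an element of $V$, so that a $V$-name for this sequence can be absorbed into $M_\xi$ by elementarity; tools such as Lemmas~\ref{approx does not add guessing function} and \ref{Baumgartner} may be needed to rule out "non-approximation" at countable submodels of $M_\xi$. The global assembly similarly hinges on the $\mathrm{cof}(\omega_1)$-continuity of the chain and on the cardinal-arithmetic bound $2^{\omega_1} < \aleph_{\omega_2}$; without either, the reconstruction of $\langle F^*_\xi\rangle$ inside $V$ collapses.
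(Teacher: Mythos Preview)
Your approach has a genuine gap that you yourself flag as ``the main obstacle,'' and your suggested patch does not close it. The recurring problem is this: at several points you show that a set (such as $X\cap M_\xi^\alpha$, or $X\cap M_\xi$, or a restriction of $\chi_X$) lies in $V$, and then want it to lie in the guessing model $M_\xi$ (or $M$). But membership in $V$ gives no leverage for membership in $M_\xi$: the model $M_\xi$ has size $\omega_1$, while there are $2^{\omega_1}$ subsets of $\omega_1$ in $V$, so most of them are simply not in $M_\xi$. Elementarity of $M_\xi$ in $H_\theta^V$ cannot ``absorb'' a specific $\omega_1$-sequence; it only tells you that some sequence with the same first-order properties exists in $M_\xi$, not the particular one you built from $X$. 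Consequently the pairs $(\omega_2,\chi_X\restriction(M_\xi^\alpha\cap\omega_2))$ need not lie in $T(M_\xi)$, and $X\cap M_\xi$ need not be $\omega_1$-approximated in $M_\xi$. The same difficulty recurs in your assembly step: knowing each $F^*_\xi$ lies in a candidate space of size $\le 2^{\omega_1}$ does not make the $\omega_2$-sequence $\langle F^*_\xi\rangle$ approximated in $M$.

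The paper's argument avoids this entirely by a covering trick that you are missing, and this is precisely where the hypothesis $2^{\omega_1}<\aleph_{\omega_2}$ enters. Since $\mathcal P^V(\omega_1)=\mathcal P^W(\omega_1)$, every initial segment $x\cap\gamma$ (for $\gamma<\omega_2$) already lies in $V$; set $\mathfrak X=\{x\cap\gamma:\gamma<\omega_2\}$. A short minimality argument using $2^{\omega_1}<\aleph_{\omega_2}$ and the preservation of cardinals $\le 2^{\omega_1}$ produces a set $M\in V$ with $|M|^V=\omega_2$ and $|M\cap\mathfrak X|=\omega_2$ in $W$. Now cover $M$ by an indestructibly strongly $\omega_1$-guessing $N$ (with $\omega_2\subseteq N$). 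The point is that $N$ now contains cofinally many initial segments $x\cap\gamma'$, so for any countable $a\in N$ one finds such a $\gamma'$ above $\sup a$ and computes $x\cap a=(x\cap\gamma')\cap a\in N$. Thus $x$ is genuinely $\omega_1$-approximated in $N$; since $\omega_1,\omega_2$ are preserved, $N$ remains $\omega_1$-guessing in $W$, so $x$ is guessed in $N$ and hence $x\in N\subseteq V$, a contradiction. The cardinal-arithmetic bound is used to locate the covering set $M$, not to reassemble pieces at the end.
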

\begin{proof}
Let $x\in W\setminus V$ be a  subset of $\omega^V_2$, and suppose that $\mathcal P^V(\omega_1)=\mathcal P^W(\omega_1)$.
We will show that  some cardinal  $\leq 2^{\omega_1}$ is no longer a cardinal in $W$.
By $\mathcal P^V(\omega_1)= \mathcal P^W(\omega_1)$, 
every initial segment of $x$ belongs to $V$. Let  \[
\mathfrak X=\{x\cap\gamma:\gamma<\omega_2\}.
\]
Note that  $\mathfrak X$ is  bounded in $V$.
Assume towards a contradiction that every $V$-cardinal  $\leq 2^{\omega_1}$ remains a  cardinal in $W$.
Working in $W$,  let $\mu\geq\omega_2$ be the least cardinal so that there is a set $M$ in $V$ of cardinality $\mu$ such that $M\cap \mathfrak X$ is of size $\omega_2$. Thus $\mu\leq 2^{\omega_1}$.
\begin{claim}
     $\mu=\omega_2$. 
\end{claim}
\begin{proof}
Assume towards a contradiction that $\mu>\omega_2$ and $M$ is a witness to it,  then one can work in $V$ and write  $M$ as the union of an increasing sequence $\langle M_\xi:\xi<{\rm cof^{V}}(\mu)\rangle$ of subsets of $M$ in $V$ whose size are less than $\mu$.
Since $\mu\leq 2^{\omega_1}<\aleph_{\omega_2}$
and every cardinal $\leq 2^{\omega_1}$ is a cardinal in $W$, we have ${\rm cof}^W(\mu)={\rm cof}^V(\mu)\neq\omega_2$. 
Thus either $\mu$ is of cofinality at most $\omega_1$,
and then by the pigeonhole principle, there is $\xi<{\rm cof}(\mu)$ such that $|M_\xi\cap\mathfrak X|=\omega_2$, or $\mu$
is regular, and then there is some $\xi<\mu={\rm cof}(\mu)$ such that  $M\cap \mathfrak X\subseteq M_\xi$. In either case, we
get a  contradiction since $|M_\xi|<\mu$.
\end{proof}
By the above claim, $\mu=\omega_2$. Let $M$ be a witness for $\mu=\omega_2$, and let $\mathfrak X'=M\cap\mathfrak X$. Notice that $V\models |M|=\omega_2$. Since $M$ is in $V$ and  $V$ satisfies  $\rm SGM^{+}(\omega_3,\omega_1)$, one can
cover $M$ by an indestructibly strongly $\omega_1$-guessing model $N$ of size $\omega_2$.
Working in $W$,  $x$ is countably approximated in $N$, since if $\gamma\in N\cap \omega_2$, then there is
$\gamma'>\gamma$ in $N$ such that $x\cap\gamma'\in\mathfrak X'\subseteq N$, and hence $x\cap\gamma\in N$. On the other hand, $N$ is a guessing model in $W$ by $\rm SGM^+(\omega_3,\omega_1)$ in $V$ and both $\omega_1$ and $\omega_2$ are cardinals in $W$.
Thus $x$ is guessed in $N$, but then $x$ must be in $N$ since $\omega_2 \subseteq N$. 
Therefore, $x$ is in $V$, which is a contradiction!
\end{proof}

The following corollaries are immediate.
\begin{corollary}
Suppose that $V\subseteq W$ are transitive models of $\rm ZFC$. Assume that $\rm SGM^+(\omega_3,\omega_1)$, $2^{\omega_0}<\aleph_{\omega_{1}}$ and $2^{\omega_1}<\aleph_{\omega_{2}}$ hold in $V$.
Suppose that $W$ has a new subset of $\omega^V_2$.
Then either $W$ contains a real which is not in $V$ or some cardinal $\leq 2^{\omega_1}$ in $V$ is collapsed  in $W$. 
\end{corollary}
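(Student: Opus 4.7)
The plan is to apply Theorem~\ref{nice application} first, and then invoke the straightforward analog of that theorem one cardinal down in order to handle the remaining case. Let $x\in W\setminus V$ be a subset of $\omega_2^V$. By Theorem~\ref{nice application} applied in $V$, either some $V$-cardinal $\leq 2^{\omega_1}$ is collapsed in $W$, in which case we are done, or $\mathcal P^V(\omega_1)\neq\mathcal P^W(\omega_1)$. Assume the latter and fix a new subset $y$ of $\omega_1^V$ in $W$; we need to produce either a new real or a cardinal collapse.

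The idea is to run the exact argument of Theorem~\ref{nice application} with the triple $(\omega_3,\omega_2,\omega_1)$ replaced by $(\omega_2,\omega_1,\omega)$, using that $\rm SGM(\omega_2,\omega_1)$ (which follows from $\rm SGM^+(\omega_3,\omega_1)$) and $2^{\omega_0}<\aleph_{\omega_1}$ hold in $V$. Specifically, assume toward a contradiction that $W$ contains no new real and that every $V$-cardinal $\leq 2^{\omega_1}$ remains a cardinal in $W$; in particular $\omega_1^V=\omega_1^W$. Since $\mathcal P^V(\omega)=\mathcal P^W(\omega)$, every initial segment $y\cap\gamma$ with $\gamma<\omega_1^V$ belongs to $V$, so $\mathfrak X=\{y\cap\gamma:\gamma<\omega_1^V\}$ is bounded in $V$. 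Working in $W$, let $\mu\geq\omega_1^V$ be the least cardinal such that some $M\in V$ of cardinality $\mu$ satisfies $|M\cap\mathfrak X|=\omega_1^V$. The same cofinality and pigeonhole analysis as in Theorem~\ref{nice application}, combined with $\mu\leq 2^{\omega_0}<\aleph_{\omega_1}$, forces $\mu=\omega_1^V$.

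Now, using $\rm SGM(\omega_2,\omega_1)$ in $V$, cover $M$ by an indestructibly $\omega_1$-guessing elementary submodel $N$ of size $\omega_1^V$. Working in $W$, and using $\omega_1^V=\omega_1^W$ so that $N$ remains $\omega_1$-guessing, $y$ is countably approximated in $N$: for $\gamma\in N\cap\omega_1$ pick $\gamma'>\gamma$ in $N$ with $y\cap\gamma'\in M\cap\mathfrak X\subseteq N$, so $y\cap\gamma\in N$. Since $N$ guesses $y$ and $\omega_1^V\subseteq N$, we conclude $y\in N\subseteq V$, contradicting that $y$ is new. The only delicate point I foresee is confirming that the $\omega_1$-level analog runs verbatim; this reduces to verifying that indestructibility of $N$ yields $\omega_1$-guessing in $W$ (ensured by $\omega_1^V=\omega_1^W$), and that $2^{\omega_0}<\aleph_{\omega_1}$ plays exactly the role that $2^{\omega_1}<\aleph_{\omega_2}$ did in the minimality-of-$\mu$ step of the proof of Theorem~\ref{nice application}.
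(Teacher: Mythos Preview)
Your proof is correct and matches the paper's intended argument. The paper leaves the proof empty (marking it as immediate), and you have correctly unpacked why: apply Theorem~\ref{nice application} to reduce to the case $\mathcal P^V(\omega_1)\neq\mathcal P^W(\omega_1)$, then rerun the same argument one level down using $\mathrm{SGM}(\omega_2,\omega_1)$ (a consequence of $\mathrm{SGM}^+(\omega_3,\omega_1)$) together with $2^{\aleph_0}<\aleph_{\omega_1}$, which is precisely the Cox--Krueger result (Proposition~3.7) in its outer-model form.
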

\begin{proof}[\nopunct]
\end{proof}
\begin{corollary}
Assume   $2^{\aleph_0}<\aleph_{\omega_1}$ and $2^{\aleph_1}<\aleph_{\omega_2}$ hold. 
Then $\rm SGM^+(\omega_3,\omega_1)$ implies both ${\rm MP}(\omega_1)$ and ${\rm MP}(\omega_2)$.
\end{corollary}
\begin{proof}[\nopunct]
\end{proof}
Our main theorem reads as follows.
\begin{theorem}[Main Theorem]\label{main-theorem}
Assume there are two supercompact cardinals. There is a generic extension $V[G]$ of $V$ in which  
${\rm SGM}^+(\omega_3,\omega_1)$  and $2^{\aleph_0}=2^{\aleph_1}=2^{\aleph_2}=\aleph_3$ hold.
\end{theorem}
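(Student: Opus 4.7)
The plan is to build $V[G]$ as a forcing extension by a modification of the main forcing $\mathbb{P}$ from \cite{MV} that was used to obtain $\mathrm{GM}^+(\omega_3,\omega_1)$. Recall that the forcing in \cite{MV} starts with two supercompact cardinals $\kappa<\lambda$, first does a suitable Laver-style preparation and a Mitchell-style collapse so that $\kappa$ becomes $\omega_2$ and $\lambda$ becomes $\omega_3$, and the side-condition part, indexed by virtual Magidor models, manufactures for every large enough $\theta$ a stationary set of models $M\in\mathcal{P}_{\omega_3}(H_\theta)$ that are strongly $\omega_1$-guessing. What is still missing is the indestructibility of each guessing submodel $M_\xi$ of size $\omega_1$ appearing in the chain $\langle M_\xi : \xi<\omega_2\rangle$ witnessing the strong guessing of $M$.

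Following the Cox--Krueger strategy distilled in \cref{tree-guessing}, the way to turn each such $M_\xi$ into an indestructibly $\omega_1$-guessing model is to specialize the associated tree $T(M_\xi)$: once $T(M_\xi)$ is weakly special, any $\omega_1$-preserving outer universe preserves the $\omega_1$-guessing property of $M_\xi$. So the plan is to interleave with the main forcing from \cite{MV} a Baumgartner-style specialization component for the family of trees $T(M_\xi)$ associated to the $\omega_1$-sized guessing submodels that are forced into the chain. Concretely, the modified forcing $\mathbb{Q}$ adds to each condition $p$ of $\mathbb{P}$ a finite specializing function $s_p\in\bigoplus_{M_\xi}\mathbb{S}(T(M_\xi))$, with natural compatibility requirements extending those of $\mathbb{P}$; this is analogous to how Cox and Krueger produce $\mathrm{SGM}(\omega_2,\omega_1)$ from $\mathrm{GM}(\omega_2,\omega_1)$.

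With the forcing in place I would verify the four key properties in turn. First, $\mathbb{Q}$ still has the full package of strong properness proved for $\mathbb{P}$ in \cite{MV} at the level of countable virtual models, guessing submodels of size $\omega_1$, and Magidor models of size ${<}\omega_2$; here one uses that the specializing component is ccc and that the trees $T(M_\xi)$ have no uncountable branches (which is where \cref{tree-guessing} is used), so that adding $\mathbb{S}$-conditions does not ruin any of the generic extensions or residue-quotient arguments of \cite{MV}. Second, by \cref{spec-poset-approx} each $\mathbb{S}(T(M_\xi))$ has the $\omega_1$-approximation property, and combining this with the $\omega_1$-approximation of the base forcing via \cref{SP implies approx,transitivity of approx} shows that $\mathbb{Q}$ still preserves the $\omega_1$-approximation property at every relevant quotient. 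Third, the $\omega_2$-approximation and Magidor-model preservation arguments of \cite{MV} go through because the added finite specializing conditions contribute only $\omega_1$-sized information. Fourth, cardinal arithmetic: since the preparation makes $2^{\aleph_0}=\aleph_3$ in \cite{MV} and the extra component has size $\aleph_2$ per tree with $\aleph_3$ many trees, a standard nice-name count yields $2^{\aleph_0}=2^{\aleph_1}=2^{\aleph_2}=\aleph_3$ in $V[G]$.

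The main obstacle I expect is the third step: showing that the addition of the ccc specialization coordinates does not destroy the delicate strong-properness-with-residues machinery of \cite{MV} for Magidor and virtual models. The trees $T(M_\xi)$ depend on the internally unbounded sequences chosen for the $M_\xi$'s, which are themselves generic objects, so one must check that for a virtual model $N$ the quotient of $\mathbb{Q}$ by $N\cap\mathbb{Q}$ still factors as a two-step iteration where the second step is a finitely-supported product of $\mathbb{S}(T(M_\xi))$-forcings that remains ccc in the intermediate model. Once this factorization is established, the residue analysis of \cite{MV} adapts verbatim, \cref{proper vs strong proper,approx does not add guessing function,decision is guessed,Baumgartner} provide all the auxiliary guessing and incompatibility arguments, and $\mathrm{SGM}^+(\omega_3,\omega_1)$ is witnessed by Magidor models in $V[G]$ exactly as in \cite{MV}.
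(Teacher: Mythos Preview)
Your overall strategy—augment the \cite{MV} side-condition forcing with a specialization component and verify that the strong-properness and approximation machinery survives—is exactly what the paper does. But there is one important difference in execution that dissolves the obstacle you flag at the end.

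You propose to specialize precisely the trees $T(M_\xi)$ attached to the $\omega_1$-sized guessing models in the generic chain. As you observe, those trees are themselves generic objects, so building the iteration around them creates a circularity. The paper avoids this entirely: it fixes a bookkeeping predicate $U$ on $V_\lambda$ that enumerates \emph{all} $\mathbb Q^\kappa_\lambda$-names for trees of size and height $\omega_1$ without uncountable branches, and at each $\gamma\in E$ the working part $w_p(\gamma)$ contributes a finite piece of a specializing function for the tree named by $U(\gamma)$. Thus the iteration $\mathbb Q^\kappa_\lambda$ is defined outright, with no reference to the guessing models it will eventually produce. In $V[G_\lambda]$ every tree of size and height $\omega_1$ without cofinal branches is special, so in particular every $T(M_\xi)$ is weakly special, and \cref{tree-guessing} then upgrades $\mathrm{GM}^+(\omega_3,\omega_1)$ to $\mathrm{SGM}^+(\omega_3,\omega_1)$ for free. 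This means the final step of the argument is not a quotient analysis per guessing model, but simply: show that $\mathrm{GM}^+(\omega_3,\omega_1)$ still holds after adding the specialization component.

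On the technical side, your description of the quotient as ``a two-step iteration where the second step is a finitely-supported product of $\mathbb S(T(M_\xi))$-forcings'' is not how the paper proceeds. The main technical work is a direct inductive proof that $\mathbb Q^\kappa_\alpha$ and the quotient $\mathbb R^N_\alpha=(\mathbb Q^\kappa_\alpha\restriction N)/G_{\alpha,N}$ by a Magidor model $N$ have the $\omega_1$-approximation property (\cref{approx} and the Key \cref{Key lemma in app}); this uses the Factorization \cref{project to quotient} and the elementary-submodel lemmas \cref{approx does not add guessing function,decision is guessed,Baumgartner} you mention, but never needs the specialization part to be ccc over an intermediate model in the way you describe. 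Once those approximation properties are in hand, the argument that Magidor models give $\omega_1$-guessing submodels (\cref{Magidor-guessing,lambda-Magidor-guessing}) is essentially the same as in \cite{MV}.
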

\begin{proof}[\nopunct]
\end{proof}
\section{An overview of virtual models}
	
We review the theory of virtual models from the authors' paper \cite{MV}.

\subsection*{The general setting}

We consider the language $\mathcal L$ obtained by adding a single constant symbol $c$ and a predicate $U$ to the standard language of set theory. 
Let us say that an $\mathcal L$-structure $\mathcal A=(A,\in, \kappa, U)$ is \emph{suitable}
if $A$ is a transitive set  and $\mathcal A\models``\ZFC"$ in the expanded language, 
where $\kappa=c^{\mathcal A}$ is an uncountable regular in $\mathcal A$. 
For a suitable structure $\mathcal A$ and an ordinal  $\alpha\in A\setminus\kappa$,
 let $\mathcal A_\alpha=(A\cap V_\alpha,\in,\kappa, U\cap V_\alpha)$. Finally, we let
\[
E_{\mathcal A}=\{ \alpha \in {\rm ORD}^A : \mathcal A_\alpha \prec \mathcal A \}. 
\]
Note that $E_A$ is a closed subset of $\ORD^{\mathcal A}$. It is not definable in $\mathcal A$, but $E_{\mathcal A}\cap \alpha$
is uniformly definable in $\mathcal A$ with parameter $\alpha$, for each
$\alpha \in E_{\mathcal A}$. 
For $\alpha \in E_{\mathcal A}$,  we let ${\rm next}_{\mathcal A}(\alpha)$ be the least ordinal in $E_{\mathcal A}$ above $\alpha$,
if such an ordinal exists. Otherwise, we leave ${\rm next}_{\mathcal A}(\alpha)$ undefined.
We shall often abuse  notation and refer to the structure $\mathcal A$ by $A$.
	
\begin{lemma}\label{alphasharpinE}  
Suppose $M$ is an elementary submodel of  a suitable structure $A$,  and that $\alpha \in E_A$. 
If $(M\cap \ORD^A)\setminus\alpha\neq\varnothing$, then ${\rm min}(M\cap\ORD^A\setminus\alpha)\in E_A$.
\end{lemma}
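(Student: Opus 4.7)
The plan is to reduce to the Tarski--Vaught criterion one $\mathcal L$-formula at a time. For each formula $\varphi$, let $C_\varphi$ denote the class of $\gamma\in\ORD^A$ such that $A_\gamma$ reflects $\varphi$; this is definable in $A$ without parameters, and by the Levy reflection scheme inside $A\models\ZFC$ it is a closed unbounded subclass of $\ORD^A$. Since $E_A=\bigcap_\varphi C_\varphi$, it suffices to show $\beta\in C_\varphi$ for every $\varphi$, where $\beta:=\min(M\cap\ORD^A\setminus\alpha)$.

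If $\beta=\alpha$ there is nothing to prove, so assume $\alpha<\beta$. By minimality of $\beta$, no element of $M\cap\ORD^A$ lies in the interval $[\alpha,\beta)$. Fix an $\mathcal L$-formula $\varphi$ and suppose, toward a contradiction, that $\beta\notin C_\varphi$. Then closedness of $C_\varphi$ gives
\[
\gamma:=\sup(C_\varphi\cap\beta)<\beta,
\]
and $\gamma$ is definable in $A$ from the single parameter $\beta$; since $\beta\in M\prec A$, we obtain $\gamma\in M$.

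The two constraints on $\gamma$ now clash. On one hand, $\gamma\in M\cap\ORD^A$ with $\gamma<\beta$ forces $\gamma<\alpha$ by the definition of $\beta$. On the other hand, $\alpha\in E_A\subseteq C_\varphi$ together with $\alpha<\beta$ places $\alpha$ in $C_\varphi\cap\beta$, so $\gamma=\sup(C_\varphi\cap\beta)\geq\alpha$. This contradiction yields $\beta\in C_\varphi$; as $\varphi$ was arbitrary, $\beta\in E_A$.

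The argument is essentially formal, and I do not anticipate real difficulty. The only \emph{trick} is to identify the right vehicle, namely the parameter-free definable closed unbounded classes $C_\varphi$, which convert the elementarity $M\prec A$ into control over a putative last reflection point below $\beta$ and then let one exploit the gap hypothesis that $M$ contains no ordinals in $[\alpha,\beta)$.
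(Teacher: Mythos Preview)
Your proof is correct and takes a genuinely different route from the paper's argument (given in \cite{MV} and \cite{Velisemiproper}).

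The paper proceeds by a direct Tarski--Vaught argument: assuming $A_\beta\not\prec A$, it picks a witnessing formula $\varphi(y,\bar x)$ and tuple $\bar x\in A_\beta$; by elementarity of $M$ (and since $\beta\in M$) such a tuple can be found in $M\cap A_\beta$. The gap $M\cap[\alpha,\beta)=\varnothing$, applied to the rank of $\bar x$, forces $\bar x\in A_\alpha$, and then $A_\alpha\prec A$ supplies a witness $y'\in A_\alpha\subseteq A_\beta$, contradicting the choice of $\varphi$ and $\bar x$.

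Your approach packages the same ingredients differently: rather than pulling the \emph{tuple} $\bar x$ into $M$, you pull the \emph{ordinal} $\gamma=\sup(C_\varphi\cap\beta)$ into $M$ via parameter-free definability of the reflection class $C_\varphi$. The gap then applies to $\gamma$ instead of to the rank of $\bar x$. This is a clean, modular argument that makes the role of L\'evy reflection explicit; the paper's version is a touch more elementary in that it unwinds Tarski--Vaught by hand and does not invoke a named theorem. One small point worth making explicit in your write-up: for closedness of $C_\varphi$ you need ``reflects $\varphi$'' to mean reflects $\varphi$ together with its subformulas (the standard reading in L\'evy reflection), since naive single-formula reflection classes need not be closed.
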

	
\begin{proof}
See  \cite[Lemma 1.1]{Velisemiproper} or  \cite[Lemma 3.1]{MV}.
\end{proof}
	
If $M$ is a submodel of a suitable
structure $A$ and $X$ is a subset of $A$, let the operation ${\rm Hull}$  be defined by
\[
{\rm Hull}(M,X) =\{ f(\bar{x}) : f\in M, \bar{x}\in X^{<\omega}, f \mbox{ is a function, and } \bar{x}\in \dom (f)\}.
\]
	
For $\alpha \in E_A$, we let ${\mathscr A}_\alpha$ denote the set of all
transitive $\mathcal L$-structures $\mathcal H\in A$ which are elementary extensions of $\mathcal A_\alpha$ and have the same cardinality as $A_\alpha$.
Note that if $\mathcal H \in \mathscr A_\alpha$ and $\alpha \in \mathcal H$, then $E_{\mathcal H}\cap \alpha=E_A\cap \alpha$.
For $\mathcal H\in \mathscr A_\alpha$, we will refer to $A_\alpha$ as the {\em standard part} of $\mathcal H$. Note that if $\mathcal H$ has nonstandard elements, then $\alpha \in E_{\mathcal H}$.

\begin{lemma}\label{hull-elementary} 
Suppose $M$ is an elementary submodel of  a suitable structure $A$,  and that $X\subseteq A$.
Let $\delta= \sup (M\cap \ORD^A)$, and suppose that $X\cap A_\delta$ is nonempty.
Then ${\rm Hull}(M,X)$ is the least elementary submodel of $A$ containing $M$ and $X\cap A_\delta$ as subsets.
\end{lemma}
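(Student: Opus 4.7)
My plan is to prove this in four steps: (a) $M \subseteq H$, (b) $X \cap A_\delta \subseteq H$, (c) $H \prec A$, and (d) any elementary submodel $N' \prec A$ with $M \cup (X\cap A_\delta) \subseteq N'$ satisfies $H \subseteq N'$, where $H := {\rm Hull}(M,X)$. Together these yield the lemma. A preliminary remark will be useful throughout: since $M \prec A$ is closed under successor, $M \cap \ORD^A$ has no maximum, so $\delta$ is a limit ordinal and $A_\delta = V_\delta^A$; moreover, for any $f\in M$, the ordinal ${\rm rank}(\dom(f))$ lies in $M\cap\ORD^A$ by elementarity, hence is below $\delta$, so $\dom(f)\subseteq A_\delta$.

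For (a) and (b), fix some $x_0 \in X\cap A_\delta$ (nonempty by hypothesis). Since $\delta=\sup(M\cap\ORD^A)$, there is $\beta\in M\cap\ORD^A$ with $x_0\in V_\beta^A$. Given $m\in M$, the constant function $V_\beta^A\to\{m\}$ is definable from $m,\beta \in M$, hence lies in $M$, and witnesses $m \in H$. For $x\in X\cap A_\delta$, similarly choose $\beta\in M\cap\ORD^A$ large enough that $x\in V_\beta^A$ and use the identity function on $V_\beta^A$, which lies in $M$, to witness $x\in H$.

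For (c), apply the Tarski--Vaught test. Given $a_1,\dots,a_n\in H$ with $A\models\exists v\,\varphi(v,\bar a)$, write each $a_i=f_i(\bar x_i)$ with $f_i\in M$ and $\bar x_i\in X^{<\omega}\cap \dom(f_i)$, then concatenate to obtain a single tuple $\bar x\in X^{<\omega}$ and, by composing each $f_i$ with the appropriate projection, functions $f_i'\in M$ with common domain $D\in M$ so that $a_i=f_i'(\bar x)$ and $\bar x\in D$. The statement ``there exists a function $g$ on $D$ such that for every $\bar y\in D$ with $A\models\exists v\,\varphi(v,f_1'(\bar y),\dots,f_n'(\bar y))$, $\varphi(g(\bar y),f_1'(\bar y),\dots,f_n'(\bar y))$ holds'' is true in $A$ by AC applied inside $A$ (this uses that $A$ is a transitive \emph{set} modeling ZFC), and the parameters $D,f_1',\dots,f_n',\varphi$ lie in $M$, so by elementarity such a $g$ exists in $M$. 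Then $b:=g(\bar x)\in H$ and $A\models\varphi(b,\bar a)$, as required.

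For (d), let $N'\prec A$ with $M\cup(X\cap A_\delta)\subseteq N'$ and take $f(\bar x)\in H$ with $f\in M$ and $\bar x\in\dom(f)\cap X^{<\omega}$. By the preliminary remark, $\bar x\in\dom(f)\subseteq A_\delta$, so every coordinate of $\bar x$ lies in $X\cap A_\delta\subseteq N'$; closure of $N'$ under tuples (pairing) then gives $\bar x\in N'$, and since $f\in M\subseteq N'$, elementarity of $N'$ in $A$ yields $f(\bar x)\in N'$. The main technical point is the Tarski--Vaught step, where the bookkeeping for assembling the single tuple $\bar x$ and the internal choice function $g$ must be done entirely inside $M$, using elementarity rather than an external Skolem construction; the rank bound ${\rm rank}(\dom(f))<\delta$ is what makes step (d) (and hence minimality) go through cleanly.
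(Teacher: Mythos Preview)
Your proof is correct and follows essentially the same approach as the paper's own proof (given in the companion paper \cite{MV}). Both use constant functions and identity functions on rank-initial segments $A_\gamma$ with $\gamma\in M$ to get $M$ and $X\cap A_\delta$ inside the hull, both verify Tarski--Vaught by pulling a choice function $g$ back into $M$ via elementarity, and both observe that minimality follows because any $f\in M$ has $\dom(f)\subseteq A_\delta$. The only cosmetic difference is that you concatenate the tuples $\bar x_i$ into a single $\bar x$ and work with a common domain $D$, whereas the paper keeps separate domains $D_1,\dots,D_n$ and defines $g$ on their product; your version is arguably cleaner since it makes explicit that the resulting argument $\bar x$ lies in $X^{<\omega}$, as the Hull definition requires.
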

\begin{proof}
See   \cite[Lemma 1.2]{Velisemiproper} or \cite[Lemma 3.3]{MV}.
\end{proof}
\subsection*{Virtual models in \texorpdfstring{$V_\lambda$}{}}
Assume that $\kappa<\lambda$ are supercompact and  inaccessible cardinals, respectively. Consider the $\mathcal L$-structure \[
V_\lambda\coloneqq(V_\lambda,\in,\kappa, U).
\]
Let $E=E_{V_\lambda}$ and ${\rm next}(\alpha)={\rm next}_{V_\lambda}(\alpha)$.
	
\begin{definition}
Suppose $\alpha \in E$. We let $\mathscr{V}_\alpha$ denote the collection of all substructures $M$ of $V_\lambda$ of
size less than $\kappa$ so that
if we let $A={\rm Hull}(M,V_\alpha)$, then $A\in \mathscr A_\alpha$ and $M$ is an elementary 
submodel of $ A$. The members of $\mathscr{V}_\alpha$ are called $\alpha$-{\em models}. 
\end{definition}
	
We write $\mathscr{V}_{< \alpha}$ for $\bigcup \{\mathscr{V}_\gamma : \gamma \in E \cap \alpha\}$ and  $\mathscr V$ for $\mathscr{V}_{< \lambda}$.
The collections $\mathscr{V}_{\leq \alpha}$ and $\mathscr{V}_{\geq \alpha}$ are defined in the obvious way. 
For $M\in \mathscr V$,  $\eta(M)$ denotes the unique ordinal $\alpha$ such that $M\in \mathscr V_\alpha$.
Note that if $M \in \mathscr V_\alpha$, then $\sup(M \cap \ORD)\geq \alpha$.
In general, $M$ is not elementary in $V_\lambda$, in fact, this only happens if $M \subseteq V_\alpha$.
In this case, we will say that $M$ is a {\em standard $ \alpha$-model}.
We refer to the members of $\mathscr V$ as
{\em virtual models}. We also refer to members of $\mathscr V^A$, for some
suitable structure $A$ with $A\subseteq V_\lambda$, as {\em general virtual models}.

Suppose $M,N\in \mathscr V$ and $\alpha \in E$.
We say that an isomorphism $\sigma:M\rightarrow N$ is an $\alpha$-{\em isomorphism}
if it has an extension to an isomorphism $\bar{\sigma}:{\rm Hull}(M,V_\alpha)\rightarrow {\rm Hull}(N,V_\alpha)$.
We say that $M$ and $N$ are $\alpha$-{\em isomorphic} and write $M\cong_{\alpha}N$ if
there is an $\alpha$-isomorphism between them. Note that   $\sigma$ and $\bar\sigma$, if they exist,  are unique.

It is clear that $\cong_{\alpha}$ is an equivalence relation, for every $\alpha\in E$. If $\alpha <\beta$ are in $E$, then
for every $\beta$-model $M$ there is a canonical representative of the $\cong_\alpha$-equivalence
class of $M$ which is an $\alpha$-model.

\begin{definition}\label{projection-models-def} 
Suppose $\alpha$ and $\beta$ are members of  $E$
and $M$ is a $\beta$-model. Let $\overline{{\rm Hull}(M,V_\alpha)}$ be the transitive collapse of
${\rm Hull}(M,V_\alpha)$, and let $\pi$ be the collapsing map. We define  $M\rest \alpha$ to be $\pi[M]$,
i.e., the image of $M$ under the collapsing map of ${\rm Hull}(M,V_\alpha)$.
\end{definition}
	
Note  that if $A\in \mathscr A_\alpha$ then $\mathscr V_\alpha^A \subseteq \mathscr V_\alpha$. 
Therefore, if $A,B\in \mathscr A_\alpha$, $M \in \mathscr V^{A}$, and $N \in \mathscr V^B$, we can 
still write $M \cong_\alpha N$ if $M \rest \alpha = N \rest \alpha$. It is straightforward to check that if 
$\alpha \leq \beta$ are in $E$, and $M\in \mathscr V$. Then
$(M   \rest   \beta)  \rest   \alpha = M  \rest   \alpha$.
For each $\alpha\in E$, the virtual version of the membership relation is defined as follows.

Suppose $M,N\in \mathscr V$  and $\alpha \in E$. 
We write $M\in_\alpha N$ if there is $M'\in N$ with $M' \in \mathscr V^N$ such that $M'\cong_\alpha M$. 
If this is the case, we say that $M$ is $\alpha$-in $N$.

\begin{lemma}\label{projection-membership} Let $\alpha\leq \beta$ be in $ E$.
Suppose $M,N \in \mathscr V_{\geq \beta}$ and $M\in_\beta N$. 
Then $M  \rest   \alpha \in_\alpha N  \rest   \alpha$.
\end{lemma}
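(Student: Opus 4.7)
The plan is to build a witness for $M\rest\alpha\in_\alpha N\rest\alpha$ by pushing the $\beta$-witness through the transitive collapse of $\text{Hull}(N,V_\alpha)$.

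First, I would upgrade the hypothesis $M\cong_\beta M'$ to $M\cong_\alpha M'$. Let $\bar\sigma:\text{Hull}(M,V_\beta)\to\text{Hull}(M',V_\beta)$ be the witnessing $\beta$-isomorphism. Since $V_\beta$ is transitive and is included as a subset of both hulls, an easy $\in$-induction on rank (using that the hulls are elementary in $V_\lambda$ and hence satisfy extensionality) shows $\bar\sigma\rest V_\beta=\text{id}$. Restricting $\bar\sigma$ to $\text{Hull}(M,V_\alpha)$ then yields an isomorphism onto $\text{Hull}(M',V_\alpha)$, so $M\cong_\alpha M'$ and in particular $M\rest\alpha=M'\rest\alpha$.

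Next, I fix the $\beta$-witness $M'\in N$ with $M'\in\mathscr V^N$ and $M'\cong_\beta M$. Let $\rho$ be the collapsing map of $\text{Hull}(N,V_\alpha)$, and set $M^*:=\rho(M')$. Since $M'\in N\subseteq\text{Hull}(N,V_\alpha)$, the element $M^*$ lies in $\rho[N]=N\rest\alpha$. Moreover, because $M'\subseteq N$, every value $f(\bar x)$ with $f\in M'$ and $\bar x\in V_\alpha^{<\omega}\cap\dom(f)$ is equally an application with $f\in N$, hence $\text{Hull}(M',V_\alpha)\subseteq\text{Hull}(N,V_\alpha)$ and $\rho$ is defined on it. Using $\rho\rest V_\alpha=\text{id}$, a short computation gives $\rho[\text{Hull}(M',V_\alpha)]=\text{Hull}(M^*,V_\alpha)$; taking the transitive collapse of both sides and tracking the images of $M'$ and $M^*$ yields $M^*\rest\alpha=M'\rest\alpha=M\rest\alpha$. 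By the cross-structure reading of $\cong_\alpha$ recorded in the excerpt, this is exactly $M^*\cong_\alpha M\rest\alpha$.

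Finally, to conclude $M\rest\alpha\in_\alpha N\rest\alpha$ it remains to verify that $M^*\in\mathscr V^{N\rest\alpha}$. This follows because $\rho$ restricts to an isomorphism between the suitable structure associated with $N$ and that associated with $N\rest\alpha$, and being a virtual model is a first-order condition preserved by such an isomorphism. The main obstacle is not any single computation but the bookkeeping: keeping straight where each hull and each collapse is computed, and confirming that the virtual-model structure on $N$ really is transported by $\rho$ to the corresponding structure on $N\rest\alpha$ so that $M^*$ functions as a genuine virtual model inside it.
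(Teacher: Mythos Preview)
Your overall strategy coincides with the paper's: take the $\beta$-witness $M'\in N$, note that $M\cong_\beta M'$ implies $M\cong_\alpha M'$, push $M'$ through the collapse $\rho$ of ${\rm Hull}(N,V_\alpha)$, and verify that $\rho(M')\in N\rest\alpha$ serves as an $\alpha$-witness. However, one step has a genuine gap.

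You assert $M'\subseteq N$ in order to obtain ${\rm Hull}(M',V_\alpha)\subseteq{\rm Hull}(N,V_\alpha)$, but $M'\subseteq N$ need not hold. The model $N$ has size ${<}\kappa$ and is not transitive; for instance, if $N$ is countable and $M'$ is a Magidor model (so $|M'|$ is uncountable), then $M'\in N$ is perfectly possible while $M'\not\subseteq N$. The paper's argument instead uses the cardinality bound $|M'|<\kappa$: since $M'\in N$ and $N$ is elementary in a suitable structure, there is a bijection $f\colon\xi\to M'$ in $N$ with $\xi<\kappa$; as every $\alpha\in E$ satisfies $\alpha\geq\kappa$, we have $\xi\subseteq V_\alpha$, so each $f(\eta)\in{\rm Hull}(N,V_\alpha)$. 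This yields $M'\subseteq{\rm Hull}(N,V_\alpha)$, from which ${\rm Hull}(M',V_\alpha)\subseteq{\rm Hull}(N,V_\alpha)$ and also $\rho[M']=\rho(M')$ follow. With this correction the rest of your argument goes through exactly as written.

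A minor side remark: the hulls are not in general elementary in $V_\lambda$; for a virtual $\gamma$-model $M$ one has ${\rm Hull}(M,V_\beta)\prec{\rm Hull}(M,V_\gamma)\in\mathscr A_\gamma$, which is a transitive end-extension of $V_\gamma$ but not an elementary substructure of $V_\lambda$. Your $\in$-induction showing $\bar\sigma\rest V_\beta={\rm id}$ only needs extensionality and the transitivity of $V_\beta$ inside both hulls, so this inaccuracy does not affect the argument.
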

\begin{proof}
See  \cite[Proposition 3.15]{MV}.
\end{proof}

\begin{definition}
For $\alpha \in E$, we let $\mathscr C_\alpha$ denote the collection of countable models in  $\mathscr V_\alpha$, and   we  let $\mathscr U_{\alpha}$ be the collection of all $M\in \mathscr V_{\alpha}$ that are $\kappa$-Magidor models. 
\end{definition}
The collections $\mathscr C_{<\alpha}$, $\mathscr C_{\leq\alpha}$,  $\mathscr C_{\geq \alpha}$, $\mathscr U_{<\alpha}$, $\mathscr U_{\leq\alpha}$, and $\mathscr U_{\geq \alpha}$ are defined similarly. We write $\mathscr C$ for $\mathscr C_{<\lambda}$, and $\mathscr U$ for $\mathscr U_{< \lambda}$,
$\mathscr C_{\rm st}$ for the collection of standard models in $\mathscr C$, and $\mathscr U_{\rm st}$ for the standard models in $\mathscr U$. Note that  both classes $\mathscr C$ and $\mathscr U$  are closed under projections.

\begin{lemma}\label{cstationaryinV}
$\mathscr C_{\rm st}$ contains a club in ${\mathcal P}_{\omega_1}(V_\lambda)$, and  $\mathscr U_{\rm st}$ is stationary in $\mathcal P_{\kappa}(V_\lambda)$.
\end{lemma}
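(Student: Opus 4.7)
The plan is to treat both claims uniformly by reducing each to showing that, for $M \prec V_\lambda$ of the appropriate size, $\alpha_M := \sup(M \cap \ORD) \in E$ holds on a club. The central observation is that any such $M$ is automatically a standard $\alpha_M$-model. Indeed, $V_\lambda$ has no largest ordinal, so by elementarity $M$ has no largest ordinal either, whence every ordinal of $M$ lies strictly below $\alpha_M$. Since the rank function is definable in $V_\lambda$, every element of $M$ has its rank in $M \cap \ORD$, yielding $M \subseteq V_{\alpha_M}$. From $\alpha_M \in E$ we get $V_{\alpha_M} \prec V_\lambda$, and combined with $M \prec V_\lambda$ this gives $M \prec V_{\alpha_M}$ by Tarski--Vaught. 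Applying \cref{hull-elementary} with $\delta = \alpha_M$ yields $\mathrm{Hull}(M, V_{\alpha_M}) = V_{\alpha_M} \in \mathscr A_{\alpha_M}$, so every clause of the standard $\alpha_M$-model definition is met.

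Next, I would verify that for each regular uncountable cardinal $\sigma \leq \kappa$, the set
\[
D_\sigma := \{M \in \mathcal P_\sigma(V_\lambda) : M \prec V_\lambda \text{ and } \alpha_M \in E\}
\]
is a club in $\mathcal P_\sigma(V_\lambda)$. Closure under $\subseteq$-increasing chains of length less than $\sigma$ is immediate from the closedness of $E$ in $\lambda$: if $\{M_i\}$ is such a chain with union $M$, then $\alpha_M = \sup_i \alpha_{M_i}$ lies in $E$. For unboundedness, given $X \in \mathcal P_\sigma(V_\lambda)$, I would recursively let $M_0$ be the $\mathcal L$-Skolem hull of $X$ in $V_\lambda$, and $M_{n+1}$ be the $\mathcal L$-Skolem hull of $M_n \cup \{\alpha_n\}$, where $\alpha_n \in E$ is chosen above $\alpha_{M_n}$, possible since $E$ is unbounded in $\lambda$ and $\alpha_{M_n} < \lambda$. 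Then $M := \bigcup_{n<\omega} M_n$ lies in $D_\sigma$ and extends $X$.

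For Part 1, the inclusion $D_{\omega_1} \subseteq \mathscr C_{\rm st}$ supplied by the reduction exhibits a club inside $\mathscr C_{\rm st}$. For Part 2, \cref{Magidor-stationary} applied with $\mu = \lambda$ (using $\mathrm{cof}(\lambda) = \lambda > \kappa$) gives a stationary set $S$ of $\kappa$-Magidor models in $\mathcal P_\kappa(V_\lambda)$; the intersection $S \cap D_\kappa$ is stationary, and every element of it is simultaneously a standard $\alpha_M$-model and a $\kappa$-Magidor model, hence lies in $\mathscr U_{\rm st}$.

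The main technical point is the identification $\mathrm{Hull}(M, V_{\alpha_M}) = V_{\alpha_M}$: the $\supseteq$ inclusion depends on $\alpha_M$ actually being a limit of $M \cap \ORD$ so that \cref{hull-elementary} forces the hull to contain all of $V_{\alpha_M}$, while the $\subseteq$ inclusion requires $M \subseteq V_{\alpha_M}$, which itself rests on $M$ having no maximum ordinal---a consequence of elementarity in $V_\lambda$.
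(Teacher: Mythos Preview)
Your argument is correct and follows essentially the same route as the paper (which defers to \cite[Propositions~3.19 and~3.24]{MV}): one shows that elementary submodels $M\prec V_\lambda$ with $\sup(M\cap\ORD)\in E$ are automatically standard $\alpha_M$-models, that such $M$ form a club, and then for $\mathscr U_{\rm st}$ one intersects this club with the stationary set of $\kappa$-Magidor models coming from \cref{Magidor-stationary}. The only cosmetic difference is that the referenced proof obtains the club more directly by taking elementary submodels of the expanded structure $(V_\lambda,\in,\kappa,U,E)$---elementarity in $E$ immediately forces $M\cap E$ to be cofinal in $M\cap\ORD$, hence $\alpha_M\in E$ by closure of $E$---whereas you build the club $D_\sigma$ by an explicit $\omega$-chain construction interleaving Skolem hulls with points of $E$; both yield the same club.
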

\begin{proof}
See \cite[Propositions 3.19 and  3.24]{MV}.
\end{proof}

\begin{definition}\label{def-active}
Let $M\in \mathscr V$. We say that $M$ is {\em active} at $\alpha \in E$ if 
$\eta (M)\geq \alpha$ and ${\rm Hull} (M,V_{\kappa_M})\cap E \cap \alpha$ is unbounded in $E\cap \alpha$,
where $\kappa_M = \sup (M\cap \kappa)$. We say that $M$ is {\em strongly active} at $\alpha$ if $\eta(M)\geq \alpha$
and $M\cap E\cap \alpha$ is unbounded in $E\cap \alpha$. 
\end{definition}
	
One can easily see that a Magidor model is active at $\alpha$ if and only if it is strongly active at $\alpha$.

\begin{notation}\label{a(M)} 
For a model $M\in \mathscr V$, let $a(M)= \{ \alpha \in E : M \mbox{ is active at } \alpha \}$ and $\alpha (M)=\max(a(M))$.
\end{notation}
Note that $a(M)$ is a closed subset of $E$ of size at most $|{\rm Hull}(M,V_{\kappa_M})|$.
We now  recall the definition of a {\em meet} from \cite{MV} which is the virtual version of intersection  defined only for  two models of different types. 
Suppose $N\in \mathscr U$ and $M\in \mathscr C$. Let $\overline{N}$ be the transitive collapse of $N$,
and let $\pi$ be the collapsing map. Note that if $\overline{N}\in M$, then $\overline{N}\cap M$ is a countable elementary submodel 
of $\overline{N}$. Then $\overline{N}\cap M \in \overline{N}$ since $\overline{N}$ is closed under countable sequence.
Note that $\pi^{-1}(\overline{N}\cap M)= \pi^{-1}[\overline{N}\cap M]$, and this model is elementary in $N$.

\begin{definition}\label{meet-def} Suppose $N\in \mathscr U$ and $M \in \mathscr C$. Let $\alpha = \max (a(N)\cap a(M))$. 
We will define $N \land M$ if $N\in_\alpha M$. Let $\overline{N}$ be the transitive collapse of $N$, and let $\pi$
be the collapsing map. 
Set
\[ 
\eta = \sup (\sup (\pi^{-1}[\overline{N}\cap M]\cap {\rm ORD})\cap E\cap (\alpha +1)).
\]
We define the meet of $N$ and $M$ to be $N\land M=\pi^{-1}[\overline{N}\cap M]\restriction\eta$.

\end{definition}

\begin{remark}\label{remark on meet properties}
It was proved in \cite{MV} that $N\land M\in \mathscr C_\eta$ is an $\eta$-model, and, furthermore, if $N\land M$ is active at $\gamma$, then $(N\land M)\rest  \gamma=N\rest  \gamma\land M\rest  \gamma$.
More intersection-like properties of $\land$ are found  on \cite[Pages 14--16]{MV}.
\end{remark}

\begin{notation}
Let $\alpha \in E$ and let  $\mathcal M$ be a set of virtual models.
We let \[
\mathcal M\rest\alpha=\{M\rest\alpha:M\in\mathcal M\} \mbox{ and } 
\mathcal M^\alpha=\{M\rest\alpha:~M\in \mathcal M \text{ is active at } \alpha\}.
\]
\end{notation}

\begin{definition}
Let $\alpha \in E$ and let $\mathcal M$ be a subset of $\mathscr U \cup \mathscr C$.
We say $\mathcal M$ is an $\alpha$-{\em chain} if for all distinct  $M,N\in\mathcal M$, either $M\in_\alpha N$ or $N\in_\alpha M$,
or there is a $P\in\mathcal M$ such that either $M\in_\alpha P\in_\alpha N$  or $N\in_\alpha P\in_\alpha  M$.
\end{definition}
	
\begin{lemma}\label{finite-chain}
Suppose $\alpha \in E$ and $\mathcal M$ is a finite subset of  $\mathscr U \cup \mathscr C$.
Then $\mathcal M$ is an $\alpha$-chain if and only if there is an enumeration 
$\{ M_i : i <n \}$ of $\mathcal M$ such that $M_0\in_\alpha \cdots \in_\alpha M_{n-1}$. 
\end{lemma}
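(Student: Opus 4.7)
The plan is to reduce the lemma to standard order-theoretic facts about finite strict linear orders, leveraging transitivity and antisymmetry of the relation $\in_\alpha$ on $\mathscr V$, both of which we expect to invoke from \cite{MV}.

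For the $(\Leftarrow)$ direction, I would argue as follows. Given an enumeration $M_0 \in_\alpha M_1 \in_\alpha \cdots \in_\alpha M_{n-1}$, iterated application of transitivity of $\in_\alpha$ yields $M_i \in_\alpha M_j$ for every $i < j < n$. Hence any two distinct members of $\mathcal M$ are directly $\in_\alpha$-comparable, which is strictly stronger than the chain condition and in particular implies it.

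For the $(\Rightarrow)$ direction, I would first use transitivity to simplify the chain hypothesis itself: whenever the chain definition provides a middleman $P \in \mathcal M$ with $M \in_\alpha P \in_\alpha N$, transitivity gives $M \in_\alpha N$ directly. Therefore, on an $\alpha$-chain $\mathcal M$, every pair of distinct elements is directly $\in_\alpha$-comparable. Combined with irreflexivity and antisymmetry of $\in_\alpha$---which should follow from the observation that $M \in_\alpha N$ forces $M \restriction \alpha$ to sit strictly below $N \restriction \alpha$ in a well-founded sense, using \cref{projection-membership} together with the rank/size behavior of the transitive collapses---the relation $\in_\alpha$ restricted to $\mathcal M$ becomes a strict linear order. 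Since $\mathcal M$ is finite, it admits a unique enumeration $M_0 \in_\alpha M_1 \in_\alpha \cdots \in_\alpha M_{n-1}$, as required. A symmetric formulation would be to induct on $|\mathcal M|$, extracting an $\in_\alpha$-minimal (or maximal) element using the pairwise comparability and antisymmetry, then invoking the inductive hypothesis on $\mathcal M$ minus that element.

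The main obstacle I anticipate is verifying that $\in_\alpha$ is genuinely a strict partial order---transitive, irreflexive, and antisymmetric---on the mixed class $\mathscr U \cup \mathscr C$. The definition of $\in_\alpha$ involves an $\alpha$-isomorphism that need not cooperate cleanly with nested memberships in all cases: given $M \in_\alpha P$ and $P \in_\alpha N$, one must transport the witness $M' \in P$ across the $\alpha$-isomorphism between $P$ and the witness $P' \in N$, which requires that $M'$ (or a suitable representative) land in the common hull ${\rm Hull}(P, V_\alpha)$. Verifying this is probably the technical heart of the argument, and it would rely on the uniqueness of $\alpha$-isomorphisms together with \cref{hull-elementary,projection-membership}. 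If the required properties of $\in_\alpha$ are already established in \cite{MV}, as seems very likely, then the present lemma reduces cleanly to the order-theoretic observation above.
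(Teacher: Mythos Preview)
Your proposal rests on $\in_\alpha$ being transitive on $\mathscr U \cup \mathscr C$, and this is precisely what fails. The only transitivity available is the conditional one (Proposition 3.26 in \cite{MV}, stated and proved as \cref{intermediate} in the appended material): from $M\in_\alpha N\in_\alpha P$ one gets $M\in_\alpha P$ provided either $N$ is countable or $P$ is Magidor. The bad case is $N$ Magidor and $P$ countable: the witness $N'\in P$ with $N'\cong_\alpha N$ is then an uncountable set sitting as an element of the countable model $P$, so $N'\not\subseteq P$, and there is no way to transport the witness for $M\in_\alpha N$ into $P$. This non-transitivity is exactly why the definition of $\alpha$-chain allows a middleman rather than demanding direct comparability, so your reduction to ``$\in_\alpha$ is a strict linear order on $\mathcal M$'' collapses.

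The paper's argument avoids this. For $(\Rightarrow)$ it does not try to prove direct $\in_\alpha$-comparability at all; instead it observes that $M\in_\alpha N$ forces $\kappa_M<\kappa_N$ (where $\kappa_M=\sup(M\cap\kappa)$), so the chain condition already makes the map $M\mapsto\kappa_M$ injective and order-reflecting, and one simply enumerates $\mathcal M$ in increasing $\kappa_M$-order. For $(\Leftarrow)$, given $M_0\in_\alpha\cdots\in_\alpha M_{n-1}$ and $i<j$, one does \emph{not} claim $M_i\in_\alpha M_j$ outright: if $M_j$ is Magidor, or if all intermediate models are countable, repeated use of the conditional transitivity gives $M_i\in_\alpha M_j$; otherwise one takes the largest $k<j$ with $M_k$ Magidor and gets $M_i\in_\alpha M_k\in_\alpha M_j$, which is exactly the middleman clause in the chain definition. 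Your instinct that order-theoretic bookkeeping suffices is right, but the order you need is the $\kappa_M$-order, not $\in_\alpha$ itself.
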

\begin{proof}
See    \cite[Proposition 3.41]{MV}.
\end{proof}

\section{Virtual models as side conditions}
	
We recall the definition of our pure side conditions forcing with decorations introduced in \cite{MV}. We give its basic properties and refer the reader to the above paper for the proofs. Recall that $\kappa<\lambda$ are supercomapct and inaccessible, respectively. 

\begin{definition}[pure side conditions]\label{MF}
Suppose $\alpha\in E$. We say that $p=\M_p$ belongs to $\mathbb M^\kappa_\alpha$ if:
\begin{enumerate}
\item $\M_p\subseteq\mathscr C_{\leq \alpha}\cup \mathscr U_{\leq \alpha}$  is finite and closed under meets,  and
\item $\M_p^\delta$ is a $\delta$-chain, for all $\delta \in  E\cap (\alpha +1)$.
\end{enumerate}
We let $\M_q\leq \M_p$ if for all $M\in \M_p$ there is $N\in \M_q$ such that  $N \rest{\eta(M)}=M$.
Finally, let $\mathbb M^{\kappa}_\lambda=\bigcup \{ \mathbb M^{\kappa}_\alpha: \alpha \in E\}$ with the same order.
\end{definition}

Suppose $\M_p\in \mathbb M^\kappa_\lambda$. 
Let 
\[
\mathcal L(\M_p)= \{ M\rest \alpha: M\in \M_p \text{ and } \alpha \in a(M)\}.
\]
We say that $M\in \mathcal L(\M_p)$ is $\M_p$-{\em free}
if  every $N\in \M_p$ with $M\in_{\eta(M)}N$  is strongly active at $\eta(M)$. 
Let $\mathcal F(\M_p)$ denote the set of all $M\in \mathcal L(\M_p)$ that are $\M_p$-free. 
Note that a model $M\in \mathcal L(\M_p)$ that is not  $\M_p$-free is not $\M_q$-free, for any  $\mathcal M_q\leq \mathcal M_p$.
	
\begin{definition}[side conditions with decorations]\label{PF} Suppose $\alpha\in E\cup \{ \lambda\}$. We say that a pair $p=(\M_p,d_p)$ belongs to $\mathbb{P}^\kappa_\alpha$ if 
$\M_p\in \mathbb M^\kappa_\alpha$, $d_p$ is a finite partial function from $\mathcal F(\M_p)$ to ${\mathcal P}_{\omega}(V_\kappa)$ such that
\medskip
\begin{itemize}
\item[$(\divideontimes)$]  \hspace{5mm} if $M \in \dom (d_p)$, $N\in \M_p$, and $M\in_{\eta(M)} N$, then $d_p(M)\in N$. 
\end{itemize}
\medskip
We say that $q\leq p$ if $\M_q \leq \M_p$ in $\mathbb M^\kappa_\alpha$, and, for every $M\in \dom(d_p)$, there exists some $\gamma \in E\cap (\eta(M)+1)$
with $M\rest \gamma \in \dom(d_q)$ such that $d_p(M)\subseteq d_q(M\rest \gamma)$. 
\end{definition}
	
We call $d_p$ the decoration of $p$.
The order on $\mathbb{P}^\kappa_\lambda$ is transitive. We will say that $q$ is {\em stronger} than $p$
if $q$ forces that $p$ belongs to the generic filter, in other words, any $r\leq q$ is compatible with $p$.
We write $p\sim q$ if each of $p$ and $q$ is stronger than the other. We identify equivalent conditions, often without saying so.
Our forcing does not have the greatest lower bound property, but if $p$ and $q$ have a greatest lower bound, we will denote it by $p\land q$. To be precise, we should refer to $p\land q$ as the $\sim$-equivalence class of a greatest lower bound, but we ignore this point as it should not cause any confusion.
Note that if $p\in \mathbb{P}^\kappa_\alpha$ and $M\in \M_p$ is a $\delta$-model that is not active at $\delta$, we may replace $M$ with $M\rest {\alpha(M)}$ and get an equivalent condition. Thus, if $\alpha \in E$ and $\cof(\alpha)\geq \kappa$, then $\mathbb{P}^\kappa_\alpha$ is forcing equivalent to
$\bigcup \{ \mathbb{P}^\kappa_\gamma : \gamma \in E\cap \alpha\}$. Suppose $\alpha, \beta \in E$ and $\alpha \leq \beta$. For every $p\in\mathbb{P}^{\kappa}_\beta$, we let $\M_{p\rest\alpha} =\M_p\rest \alpha$ and $d_{p\rest \alpha}= d_p\rest {\mathcal F(\M_p\rest \alpha)}$. It is easy to see that $p\rest \alpha= (\M_{p\rest \alpha},d_{p \rest \alpha})\in \mathbb{P}^{\kappa}_\alpha$.
The following is straightforward.
	
\begin{lemma}\label{CS}
Suppose that $\alpha,\beta \in E$ with $\alpha \leq \beta$. Let $p\in\mathbb{P}^{\kappa}_\beta$ 
and let $q\in\mathbb{P}^{\kappa}_\alpha$ be such that $q \leq p\rest \alpha$.
Then there exists $r\in\mathbb{P}^{\kappa}_\beta$ such that $r\leq p,q$.
\end{lemma}

\begin{proof} We let $\M_r = \M_p \cup \M_q$. Note that $\M_r$ is closed under meets. 
We define $d_r$  by letting  $d_r(M)=d_q(M)$ if $M\in \dom(d_q)$, and $d_r(M)= d_p(M)$ if $M\in \dom(d_p)$ with $\eta(M) > \alpha$. 
It is straightforward that $r$ is as required. 
\end{proof}
	
\begin{remark} The condition $r$ from the previous lemma is the greatest lower bound of $p$ and $q$, so we will let $r \coloneqq p \land q$. 
\end{remark}
	
\begin{definition}\label{M on top 1}
Assume $p\in \mathbb{P}^{\kappa}_\lambda$, and that $M\in \mathscr C \cup \mathscr U$ be such that $p\in M$.
Let $\M_{p^M}$ be the closure of $\M_p\cup \{ M\}$ under meets, and let $d_{p^M}$ be defined on
$${\rm dom}(d_{p^M})=\{N\rest\delta: N\in {\rm dom}(d_p) \text{ and } \delta={\rm sup}(M\cap\eta(N))\},$$
by letting $d_{p^M}(N\rest\delta)=d_p(N)$.
		
\end{definition}

\begin{lemma}\label{topM}
$p^M$ is the weakest condition extending $ p$ with $M\in \M_{p^M}$.
\end{lemma}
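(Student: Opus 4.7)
The plan is to verify three assertions: (i) $p^M$ is a well-formed element of $\mathbb{P}^{\kappa}_\lambda$, (ii) $p^M \leq p$ and $M \in \M_{p^M}$, and (iii) for any $q \leq p$ with $M \in \M_q$, already $q \leq p^M$.

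For (i), the collection $\M_{p^M}$ is closed under meets by construction, so I would just need to check the chain condition on the projections. The new models in $\M_{p^M}\setminus(\M_p\cup\{M\})$ are meets $M\wedge N$ for $N\in \M_p$, and the identity $(N\wedge M)\rest\gamma = N\rest\gamma \wedge M\rest\gamma$ recalled from \cite{MV} lets me reduce the chain property at each $\gamma\in E$ to the property already enjoyed by $\M_p$, together with the fact that $M$ (or a projection of it) lies above the relevant models. For the decoration, for each $N \in \dom(d_p)$, the ordinal $\delta = \sup(M \cap \eta(N))$ is in $E$ by elementarity of $M$ together with \autoref{alphasharpinE} applied to a tail of $M\cap\ORD$, so $N \rest\delta$ is a legitimate virtual model; its $\M_{p^M}$-freeness follows from that of $N$ in $\M_p$ together with the observation that after adjoining $M$ the only obstruction to freeness is $M$ itself, which is strongly active at $\delta$ by choice of $\delta$. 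Finally, $(\divideontimes)$ for $d_{p^M}$ is inherited from $(\divideontimes)$ for $d_p$, since $d_{p^M}(N\rest\delta) = d_p(N)$ and any $L \in \M_{p^M}$ with $N\rest\delta \in_\delta L$ either lies in $\M_p$ (giving the property from $p$) or is $M$ or a meet involving $M$, in which cases $d_p(N) \in M$ follows because $d_p(N)$ is a finite subset of $V_\kappa$ definable in $V_\alpha$ for some $\alpha\in M$.

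For (ii), since $\M_p \subseteq \M_{p^M}$, the model-part of the order is trivial. For the decoration: given $N \in \dom(d_p)$, take $\gamma = \sup(M\cap\eta(N))\in E \cap (\eta(N)+1)$; then $N\rest\gamma \in \dom(d_{p^M})$ and $d_p(N) \subseteq d_p(N) = d_{p^M}(N\rest\gamma)$. And $M\in \M_{p^M}$ directly from the construction.

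For (iii), which is the main step, suppose $q \leq p$ with $M \in \M_q$. The model part $\M_q \leq \M_{p^M}$ reduces to witnessing each element of $\M_{p^M}$ by a refinement in $\M_q$: elements of $\M_p$ are witnessed by $q \leq p$, $M$ witnesses itself, and meets $M\wedge N$ are witnessed by the corresponding meet in $\M_q$ (which exists because $\M_q$ is closed under meets). The decoration part is the delicate point: given $L = N\rest\delta \in \dom(d_{p^M})$ with $\delta = \sup(M\cap\eta(N))$, the condition $q\leq p$ yields some $\gamma_N \in E \cap (\eta(N)+1)$ with $N\rest\gamma_N \in \dom(d_q)$ and $d_p(N) \subseteq d_q(N\rest\gamma_N)$; I must then argue $\gamma_N \leq \delta$. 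The main obstacle, and the reason the definition of $d_{p^M}$ has to project at level $\delta$, is that $N\rest\gamma_N$ must be $\M_q$-free, and since $M\in\M_q$ lies above the relevant copies, freeness forces $\gamma_N$ to lie at a level at which $M$ is strongly active; combined with the fact that $\sup(M\cap\eta(N))$ is the largest such level below $\eta(N)$ visible through $M$, this pins $\gamma_N \leq \delta$. Once $\gamma_N \leq \delta$, we have $(N\rest\delta)\rest\gamma_N = N\rest\gamma_N \in \dom(d_q)$ with $d_{p^M}(L) = d_p(N) \subseteq d_q(N\rest\gamma_N)$, giving $q \leq p^M$. The bulk of the technical work lies precisely in this freeness-plus-$(\divideontimes)$ argument that forces the projection level down to $\delta$.
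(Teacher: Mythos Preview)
Your three-part structure matches the argument of \cite[Lemma 4.11]{MV} exactly, and your key insight in (iii)—that $\M_q$-freeness of $N\rest\gamma_N$ combined with $M\in\M_q$ and $N\in M$ forces $M$ to be strongly active at $\gamma_N$, hence bounds $\gamma_N$—is precisely why $p^M$ is weakest. The cited source leaves this as ``straightforward,'' so you have supplied the substantive content.

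A few steps need tightening. Your justification that $d_p(N)\in M$ is not a definability argument: simply $p\in M$, so $d_p\in M$ and hence $d_p(N)\in M$. For $(\divideontimes)$ at the projected level, when $L\in\M_p$ and $N\rest\delta\in_\delta L$ you cannot directly invoke $p$'s $(\divideontimes)$, which concerns $N\in_{\eta(N)}L$; one must first lift back to level $\eta(N)$ via continuity of the $\in_\gamma$ relation (this is what \cite{MV} does). Likewise, freeness of $N\rest\delta$ must be checked against the new meets $Q\wedge M$, not only against $M$; this follows once both $Q$ and $M$ are strongly active at $\delta$. Finally, the bound $\gamma_N\leq\delta$ is not quite as you state when $\eta(N)$ is a successor point of $E$ of uncountable cofinality and $M$ is countable: then $M$ is strongly active at $\eta(N)$ itself while $\delta<\eta(N)$ and in fact $\delta\notin E$. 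This is really an imprecision in Definition~\ref{M on top 1}; the construction in \cite{MV} keeps $N$ unchanged whenever $M$ is strongly active at $\eta(N)$, and then $\gamma_N\leq\eta(N)$ is immediate. With that reading your freeness argument is correct in the remaining case.
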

\begin{proof}
\cite[Lemma 4.11]{MV}.
\end{proof}
	
\begin{notation}
For virtual models $N,M$, we set $\alpha (N,M)= \max (a(N)\cap a(M))$. 
\end{notation}
We are now about to give the restriction of a condition to a given model. We start with Magidor models.
	
\begin{definition}\label{restriction-magidor} Let $p\in \mathbb{P}^\kappa_\lambda$. Assume that $M\in \mathcal L(\M_p)$ is a Magidor model. 
For $N\in \M_p$, we let $N\rest M = N \rest {\alpha(N,M)}$ if $\kappa_N < \kappa_M$, 
otherwise $N\rest M$ is undefined. 
Let  $p\rest M$ be defined by
\[
\M_{p \rest M} = \{ N \rest M : N \in \M_p\} \mbox{ and } d_{p \rest  M}= d_p \rest{(\dom(d_p)\cap M)}.
\]
\end{definition}
	
\begin{proposition}\label{rest-magidor-prop} Suppose $p\in \mathbb{P}^\kappa_\lambda$ and $M\in \mathcal L(\M_p)$ is a Magidor model. 
Then $p\rest  M \in \mathbb{P}^\kappa_\lambda \cap M$ and $p\leq p\rest  M$.  Moreover, if $q\in M\cap\mathbb{P}^\kappa_\lambda$ extends $p\rest  M$.
Then $q$ is compatible with $p$ and the meet $p\land q$  exists.
\end{proposition}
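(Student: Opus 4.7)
The plan is to address each of the three assertions of the proposition in turn: that $p\restriction M \in \mathbb P^\kappa_\lambda \cap M$, that $p$ extends $p\restriction M$, and that any $q \in M \cap \mathbb P^\kappa_\lambda$ extending $p\restriction M$ admits a greatest lower bound with $p$.

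First I would verify $p\restriction M \in \mathbb P^\kappa_\lambda$ clause by clause in \cref{PF}. Finiteness is immediate. Closure of $\M_{p\restriction M}$ under meets follows from the identity $(N\land N')\restriction\gamma = N\restriction\gamma\land N'\restriction\gamma$ recalled after \cref{projection-membership}, applied at $\gamma=\alpha(N,M)$. The $\delta$-chain property is inherited from $\M_p^\delta$ via \cref{projection-membership}, since $\in_\delta$ is preserved under projection. For $(\divideontimes)$: if $M' \in \dom(d_p)\cap M$ and $N'=N\restriction M \in \M_{p\restriction M}$ satisfies $M' \in_{\eta(M')} N'$, I would lift to $M' \in_{\eta(M')} N$ and apply $(\divideontimes)$ for $p$ to obtain $d_p(M')\in N$; since $d_p(M')$ is a finite subset of $V_\kappa$ sitting in $M$, it belongs to $N \restriction \alpha(N,M) = N'$. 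For membership $p\restriction M \in M$, each projection $N\restriction M$ and the finite function $d_{p\restriction M}$ are recovered inside $M$ from parameters in $M$, using $V_{\kappa_M} \subseteq M$ when $\alpha(N,M) < \kappa_M$ and invoking the activeness and elementarity of $M$ at $\alpha(N,M)$ otherwise. The second assertion, $p\leq p\restriction M$, is then immediate from \cref{PF}: for each $N'=N\restriction M$, the model $N\in \M_p$ witnesses the order since $N\restriction \eta(N')=N'$, and the decoration is a literal restriction.

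The third and most delicate claim requires constructing $r = p\land q$. I would set $\M_r$ to be the closure of $\M_p \cup \M_q$ under meets and $d_r = d_p \cup d_q$. Two verifications are needed: that closing under meets adds no genuinely new models, and that the $\delta$-chain property passes to $\M_r$. Both rely on $q\in M$ forcing $\M_q \subseteq M$ and thus $\kappa_{N_2}<\kappa_M$ for every $N_2 \in \M_q$. For a meet $N_1 \land N_2$ with $N_1\in \M_p$ and $N_2\in \M_q$, the computation reduces to $(N_1\restriction M)\land N_2$ inside $\M_{p\restriction M}\cup \M_q$; since $q$ extends $p\restriction M$ and $\M_q$ is closed under meets, this meet is already in $\M_q$. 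The chain property splits by the position of $\delta$ relative to $\kappa_M$: for $\delta\leq\kappa_M$ one uses $\M_p^\delta \restriction M \subseteq \M_q^\delta$, and for $\delta>\kappa_M$ each $N_2 \in \M_q$ active at $\delta$ is $\in_\delta$-below every $N_1\in \M_p$ active at $\delta$ with $\kappa_{N_1}\geq \kappa_M$, with their meet providing the intermediate witness of \cref{finite-chain}. By construction $r$ extends both $p$ and $q$ and is their greatest lower bound.

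The main obstacle will be this third part: orchestrating the meet-closure and the $\delta$-chain condition simultaneously in the combined model set, which requires a delicate case analysis driven by the position of $\delta$ relative to $\kappa_M$ and a careful use of the hypothesis $q \in M$ to confine $\M_q$ below $\kappa_M$.
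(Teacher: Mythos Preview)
Your case analysis for the $\delta$-chain property in the third part is based on a misconception: you split according to whether $\delta \leq \kappa_M$ or $\delta > \kappa_M$, but every $\delta \in E$ satisfies $\delta > \kappa > \kappa_M$ (since $V_\delta \prec V_\lambda$ as $\mathcal L$-structures forces $\kappa \in V_\delta$). So your first case is vacuous, and the second case as you have written it does not stand on its own. The correct dichotomy, which the paper (via \cite{MV}) uses, is whether $M$ is \emph{active} at $\delta$ or not. When $M$ is not active at $\delta$, no model of $\M_q$ is active at $\delta$ either (since $q \in M$), so $\M_r^\delta = \M_p^\delta$. When $M$ is active at $\delta$, one shows that every model in $\M_r^\delta$ strictly $\in_\delta^*$-below $M\restriction\delta$ already belongs to $\M_q^\delta$: if $R \in \M_p^\delta$ with $R \in_\delta^* M$, then $R \in_\delta M$ by \cref{projection-membership} and the intermediate lemma, and then $\delta \in M$ (since $M$ is Magidor and active at $\delta$ with something below it), so $R \in M$, hence $R \in \M_{p\restriction M}^\delta \subseteq \M_q^\delta$. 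This gives $\M_r^\delta = \M_q^\delta \cup [M\restriction\delta, V_\lambda)_p^\delta$, from which the chain property is clear.

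Your closure-under-meets argument is also too coarse. The nontrivial case is a Magidor $Q \in \M_q^\delta$ and a countable $P \in \M_p^\delta$ with $M\restriction\delta \in_\delta^* P$ and $Q \in_\delta P$. One cannot simply ``reduce to $(N_1\restriction M)\land N_2$'' since $P\restriction M$ need not be defined. The paper proceeds by induction on the number of Magidor models in $[M\restriction\delta, P)_p^\delta$: taking the $\in_\delta^*$-largest Magidor $N$ there, one has $Q \land P = Q \land (N\land P)$ by the absorption property of meets, and $N\land P$ sits strictly lower. Finally, your choice $d_r = d_p \cup d_q$ may clash on $\dom(d_p)\cap M$; the paper takes $d_r = d_q \cup d_p\restriction(\dom(d_p)\setminus M)$, and you have not addressed the verification that every model in $\dom(d_r)$ remains $\M_r$-free, which requires a separate argument using \cref{projection-membership} and the meet properties.
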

\begin{proof}
See  \cite[Lemmata 4.13 and 4.14]{MV}.
\end{proof}
As a corollary, we have that  the forcing $\mathbb{P}^\kappa_\lambda$ is strongly proper for $\mathscr U$.
We now define an analogue of $p\rest  M$ for  countable models $M\in \mathcal L(\M_p)$.
Now suppose $p\in \mathbb{P}^\kappa_\alpha$ and $M\in \M_p$ is either a standard countable  model or a countable model with $\alpha\in M$.  
Suppose $N\in \M_p$ and  $N\in_\gamma M$, where  $\gamma=\max(a(M)\cap a(N))$. If $\gamma\in M$, then $N\rest  \gamma\in M$, but it may be that $\gamma \notin M$, but then, if we let $\gamma^*= \min (M\cap \lambda\setminus \gamma)$, we have
that $\gamma^*\in E\cap M$ by \cref{alphasharpinE}.
Thus there is a $\gamma^*$-model $N^*\in M$ 
which is $\gamma$-isomorphic to $N\rest  \gamma$.  Moreover, such $N^*$ is unique\footnote{ See the paragraph above Definition 4.21 of \cite{MV}}.

\begin{definition}\label{restriction-countable} 
Let $p\in \mathbb{P}^\kappa_\alpha$. Assume that  $M\in \mathcal L(\M_p)$ is  a countable model that is either standard or contains $\alpha$. 
Suppose that $N\in \M_p$, and let $\gamma = \alpha (M,N)$. If $N\in_\gamma M$, we let $\gamma^*= \min (M\cap \lambda\setminus \gamma)$.
We define $N\rest  M$ to be the unique $\gamma^*$-model  $N^*\in M$ such that $N^* \cong_\gamma N$.  
Otherwise, $N\rest  M$ is undefined. Let
\[
\M_ {p\rest  M} = \{ N\rest  M : N\in \M_p\} \text{ and}
\]
\[
\dom(d_{p \rest  M})= \{ N\rest  M: N\in \dom(d_p) \text{ and } N\in_{\eta (N)}M \}.
\]
If $N\in \dom(d_p)$ and $N\in_{\eta(N)} M$, we then let $d_{p\rest  M}(N\rest M)= d_p(N)$.
Let also \[
p\rest M \coloneqq (\M_{p\rest  M},d_{p \rest  M}).\] 
\end{definition}
	
\begin{remark} Suppose $N\in \dom (d_p)$ and let $\eta= \eta(N)$. If $N\in_\eta M$ then $M$ is strongly active at $\eta$ since $N$ is $\M_p$-free. 
If $\eta \in M$ then we put $N$ in $\dom(d_{p\rest M})$ and keep the same decoration at $N$. 
If $\eta  \notin M$ we lift $N$ to the least level $\eta^*$ of $M$ above $\eta$, we put the resulting model $N^*$ in $\dom(d_{p\rest  M})$
and copy the decoration of $N$ to $N^*$.  If $P\in \M_p$ is such that $P\rest \eta =N$ then $(P\rest M)\rest {\eta^*} = N^*$.
Moreover, we can recover $N$ from $N^*$  as $N^*\rest {\sup (\eta^*\cap M)}$. Thus, the function $d_{p\rest  M}$ is well-defined. 
Note also that $p\rest M \in M$. 
\end{remark}
	
\begin{proposition}\label{SP}
Let $p\in \mathbb{P}^\kappa_\alpha$ and let $M\in \mathscr C$ such that $\alpha \in M$ and $M\rest \alpha \in \mathcal M_p$. 
Then $p\rest  M\in \mathbb{P}^\kappa_{\alpha}$, and for any $q\in \mathbb{P}^\kappa_{\alpha}\cap M$ with $q\leq p\rest M$, $p$ and $q$ are compatible, and  
the meet $p\land q$ exists. 
\end{proposition}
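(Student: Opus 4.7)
The plan is to mirror Proposition~\ref{rest-magidor-prop} but use the lifting mechanism from Definition~\ref{restriction-countable}. First I verify that $p\restriction M$ is a well-defined condition lying in $M\cap\mathbb{P}^\kappa_\alpha$. For each $N\in\mathcal M_p$, since $M\restriction\alpha\in\mathcal M_p$ the chain property at $\alpha$ forces $N\in_{\gamma}M$ for $\gamma=\alpha(M,N)\le\alpha$; by Lemma~\ref{alphasharpinE} the ordinal $\gamma^*=\min(M\cap\lambda\setminus\gamma)$ belongs to $E\cap M$, so the unique lifted model $N^*=N\restriction M\in M$ is available. Closure of $\mathcal M_{p\restriction M}$ under meets is inherited from closure of $\mathcal M_p$ together with the fact that the lift commutes with meets by uniqueness; the chain property of $\mathcal M_{p\restriction M}^\delta$ for $\delta\le\alpha$ follows from that of $\mathcal M_p^\delta$ via Lemma~\ref{projection-membership}. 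The decoration $d_{p\restriction M}$ satisfies $(\divideontimes)$: if $N\in\dom(d_p)$ with $N\in_{\eta(N)}M$, then $d_p(N)\in M$ because $M$ is strongly active at $\eta(N)$ (as $N$ is $\mathcal M_p$-free), and the clause $(\divideontimes)$ for $p$ already places $d_p(N)$ into every $P\in\mathcal M_p$ to which $N^*$ is $\eta(N^*)$-in. Finally, $p\le p\restriction M$ is immediate from the definition, taking $\gamma=\eta(N^*)$.

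For the compatibility assertion, given $q\in M\cap\mathbb{P}^\kappa_\alpha$ with $q\le p\restriction M$, I set $\mathcal M_r=\mathcal M_p\cup\mathcal M_q$ and $d_r=d_p\cup d_q$, and claim $r=(\mathcal M_r,d_r)$ is a condition extending both $p$ and $q$, and is their greatest lower bound. Closure under meets of $\mathcal M_r$ requires that any meet $N\land N'$ with $N\in\mathcal M_p$ and $N'\in\mathcal M_q\subseteq M$ already belong to $\mathcal M_p\cup\mathcal M_q$: by $q\le p\restriction M$, for each projection $N\restriction M\in\mathcal M_{p\restriction M}$ there exists $\tilde N\in\mathcal M_q$ with $\tilde N\restriction \eta(N\restriction M)=N\restriction M$, so the meet reduces, via the description of $\land$ in terms of transitive collapses, to a meet already present in $\mathcal M_q$ (resp.\ $\mathcal M_p$ when $\tilde N$ is a Magidor model). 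The $(\divideontimes)$ property for $d_r$ splits along the two pieces and is immediate for $d_p$ by definition, and for $d_q$ because $M'\in\dom(d_q)$ has $\eta(M')<\alpha$ and any $P\in\mathcal M_p$ with $M'\in_{\eta(M')}P$ must satisfy $P\restriction M\in\mathcal M_{p\restriction M}$, so the relation comes from the copy of $P$ inside $\mathcal M_q$ guaranteed by $q\le p\restriction M$.

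The main obstacle is verifying the chain property of $\mathcal M_r^\delta$ at each $\delta\in E\cap(\alpha+1)$. I will partition $\mathcal M_r^\delta$ according to whether the model is $\in_\delta$-below $M$ or is $M$-itself (up to projection) or contains $M$. Using the chain property of $\mathcal M_p^\delta$ and $M\restriction\alpha\in\mathcal M_p$, each $N\in\mathcal M_p$ either satisfies $N\in_\delta M$ or $M\in_\delta N$ (with an intermediate witness allowed). In the first case, the lift $N^*=N\restriction M$ is an element of $\mathcal M_{p\restriction M}$, so by $q\le p\restriction M$ there is $\tilde N\in\mathcal M_q$ with $\tilde N\restriction \eta(N^*)=N^*$; then $\tilde N$ and any $N'\in\mathcal M_q^\delta$ are $\delta$-comparable inside $\mathcal M_q^\delta$, and this comparison pulls down to a $\delta$-comparison of $N$ and $N'$ (with a witness from $\mathcal M_q$ if needed) by Lemma~\ref{projection-membership} and Lemma~\ref{finite-chain}. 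In the second case, $N'\in_\delta M\in_\delta N$ for every $N'\in\mathcal M_q^\delta$, giving $\delta$-comparability via the witness $M$. Combining these, $\mathcal M_r^\delta$ is a $\delta$-chain, so $r\in\mathbb{P}^\kappa_\alpha$; evidently $r\le p$ and $r\le q$, and any common extension of $p,q$ refines $r$, so $r=p\land q$.
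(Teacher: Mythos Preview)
Your construction of the common extension has a genuine gap in the countable case. You set $\mathcal M_r=\mathcal M_p\cup\mathcal M_q$ and assert this is already closed under meets, but it is not. Suppose $M$ is countable. By \cref{gapmeet} (from \cite{MV}) the part of $\mathcal M_p^\delta$ below $M\restriction\delta$ that is \emph{not} $\in_\delta M$ consists of intervals $[N\land M,N)^\delta_p$ for Magidor $N\in(\mathcal M_p\restriction M)^\delta$; the countable models $P$ in such an interval satisfy $P\notin_\delta M$, so $P\restriction M$ is undefined and your $\tilde N$ argument does not apply to them. Now take a new Magidor model $Q\in\mathcal M_q^\delta$ with $N\land M\in_\delta Q\in_\delta N$: then $Q\in_\delta P$ for each such $P$, and the meet $Q\land P$ must appear in $\mathcal M_r$. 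This meet is neither in $\mathcal M_p$ (since $Q$ is new) nor in $\mathcal M_q$ (since $P\notin M$). The actual proof in \cite{MV} therefore takes $\mathcal M_r$ to be the \emph{closure} of $\mathcal M_p\cup\mathcal M_q$ under meets and invokes a separate structural lemma (the level-merging analysis) to show this closure remains a $\delta$-chain at every level; the new models inserted are precisely chains $Q\land P_0\in_\delta\cdots\in_\delta Q\land P_{k-1}\in_\delta Q$ sitting just below each such $Q$.

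The decoration $d_r=d_p\cup d_q$ is also too coarse. First, a model $P\in\dom(d_p)$ with $P\in_{\eta(P)}M$ may fail to be $\mathcal M_r$-free: a countable model from $\mathcal M_q$, or one of the new meets above, can sit $\eta(P)$-above $P$ without being strongly active at $\eta(P)$, so $P$ cannot remain in $\dom(d_r)$. Second, models $P\in\dom(d_q)$ live at lifted levels $\gamma^*\in M$, and when $\gamma^*\notin E\cap(\alpha+1)$ as a genuine level of activity for the combined chain they must be projected down before being placed in $\dom(d_r)$. The \cite{MV} proof accordingly sets $\dom(d_r)=D_p\cup D_q$ with $D_p=\{P\in\dom(d_p):P\notin_{\eta(P)}M\}$ and $D_q=\{P\restriction\delta(P):P\in\dom(d_q)\}$, where $\delta(P)$ is the largest level at which $M$ is strongly active below $\eta(P)$, and then verifies freeness and $(\divideontimes)$ separately on each piece. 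Your argument does not address either of these adjustments.
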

\begin{proof}
See  \cite[Lemma 4.26]{MV}.
\end{proof}
	
Note that if $p,q\in\mathbb{P}^\kappa_\lambda$ are such that $q\leq p$ and $M\in \M_p$ then  $q\rest M \leq p\rest M$. It follows from \cref{topM,SP} that $\mathbb{P}^\kappa_\lambda$ is strongly proper for $\mathscr C_{\rm st}$, and that  $\mathbb{P}^\kappa_\alpha$ is strongly proper for the collection of all models $M\in\mathscr C$ with $\alpha\in M$.

For a filter $F$  in $\mathbb{P}^\kappa_\lambda$, we set  $\mathcal M_{F}\coloneqq\bigcup\{\mathcal M_p:~p\in F\}$, and for a $V$-generic filter $G$ in $\mathbb{P}^\kappa_\lambda$, we let $G_\alpha=G\cap\mathbb{P}^\kappa_\alpha$, for all $\alpha\in E$.  The following is easy. 
	
\begin{lemma}\label{chainfilter}
For every $\delta \in E$ with ${\rm cof}(\delta)<\kappa$, $\mathcal M_G^\delta$ is a $\delta$-chain.
\end{lemma}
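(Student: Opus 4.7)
The strategy is to reduce the $\delta$-chain property of $\mathcal M_G^\delta$ to condition (2) of Definition \ref{MF}, which enforces the analogous property at the level of each individual condition of $\mathbb M^\kappa_\lambda$. Given two elements of $\mathcal M_G^\delta$, the goal is to realize both as projections of models appearing in a single $\mathcal M_r$ for some $r \in G$, and then transport the chain property from $\mathcal M_r^\delta$ up to $\mathcal M_G^\delta$.

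Take $M_0, N_0 \in \mathcal M_G^\delta$, and write them as $M_0 = M \restriction \delta$ and $N_0 = N \restriction \delta$ for some $M, N \in \mathcal M_G$ active at $\delta$. Pick $p, q \in G$ with $M \in \mathcal M_p$ and $N \in \mathcal M_q$, and use directedness of $G$ to obtain $r \in G$ with $r \leq p, q$, say $r \in \mathbb P^\kappa_\alpha$. By the order on side-condition parts, there exist $M^+, N^+ \in \mathcal M_r$ with $M^+ \restriction \eta(M) = M$ and $N^+ \restriction \eta(N) = N$. Since the collapse defining $M^+ \restriction \eta(M)$ fixes $V_{\eta(M)}$ pointwise, one has $M^+ \cap V_{\eta(M)} = M \cap V_{\eta(M)}$; in particular $M^+ \cap E \cap \delta = M \cap E \cap \delta$, and the analogous equation holds for the hull traces relevant in the countable case (noting that $\kappa_{M^+} = \kappa_M$ since $\kappa < \delta \leq \eta(M)$). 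Hence $M^+$ remains active at $\delta$, and the projection identity $(M^+ \restriction \eta(M)) \restriction \delta = M^+ \restriction \delta$ recalled in Section 4 gives $M^+ \restriction \delta = M_0$. The same reasoning applies to $N^+$ and $N_0$.

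Because $\mathcal M_r \subseteq \mathscr C_{\leq \alpha} \cup \mathscr U_{\leq \alpha}$ we have $\eta(M^+), \eta(N^+) \leq \alpha$, while activity at $\delta$ yields $\eta(M^+), \eta(N^+) \geq \delta$, so $\delta \in E \cap (\alpha+1)$. Condition (2) of Definition \ref{MF} applied to $r$ then asserts that $\mathcal M_r^\delta$ is a $\delta$-chain. Since $M_0, N_0 \in \mathcal M_r^\delta \subseteq \mathcal M_G^\delta$, either they are related directly by $\in_\delta$ or there is an intermediate $P \in \mathcal M_r^\delta \subseteq \mathcal M_G^\delta$ witnessing the chain relation, as required.

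This argument presents no serious obstacle: the $\delta$-chain condition was built into the definition of each $\mathbb P^\kappa_\alpha$ precisely to make such closure statements transparent under the directed unions arising from a generic filter. The only care needed is in checking that replacing $M$ by an extension $M^+$ with $M^+ \restriction \eta(M) = M$ preserves both activity at $\delta$ and the projection to level $\delta$, which are direct consequences of the properties of the projection operation reviewed in Section 4. The hypothesis $\cof(\delta) < \kappa$ does not enter the argument itself and presumably reflects the setting of the intended applications.
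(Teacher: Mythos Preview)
Your proof is correct and follows precisely the natural approach; the paper itself gives no argument beyond citing \cite[Proposition 4.31]{MV} and calling it straightforward, so you have filled in exactly the expected details. Your observation that the hypothesis $\cof(\delta)<\kappa$ is not used in the verification is also accurate: it is there for the intended applications (e.g., \cref{continuity-Magidor} and \cref{addingclub}), not for this lemma itself.
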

\begin{proof}[\nopunct]
\end{proof}
	
\begin{proposition}\label{continuity-Magidor} 
Let $\delta\in E$  with ${\rm cof}(\delta)<\kappa$.  Suppose $M\in \M_G^\delta$ is a Magidor model that is not the least model in $\M_G^\delta$. Then  
$$M\cap V_\delta = \bigcup \{ Q\cap V_\delta : Q\in_\delta M \mbox{ and } Q\in \M_G^\delta\}.$$ 
\end{proposition}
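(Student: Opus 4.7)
My plan is to establish the two inclusions separately.

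The direction $\supseteq$ is routine from the definitions. Given $Q\in \mathcal M_G^\delta$ with $Q\in_\delta M$, choose $Q^*\in M\cap \mathscr V^M$ witnessing $Q\in_\delta M$, so $Q^*\cong_\delta Q$ and hence $Q\cap V_\delta = Q^*\cap V_\delta$. Because $|Q^*|^M<\pi_M(\kappa)$, while $V_{\pi_M(\kappa)}\subseteq M$ by the Magidor property of $M$ (\cref{Magidor models}), a bijection $|Q^*|^M\to Q^*$ inside $M$ shows $Q^*\subseteq M$, so $Q\cap V_\delta\subseteq M\cap V_\delta$.

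For the direction $\subseteq$, fix $x\in M\cap V_\delta$. Since $M\in \mathcal M_G^\delta$, pick $M_0\in \mathcal M_G$ active at $\delta$ with $M_0\restriction\delta=M$, and $p\in G$ with $M_0\in \mathcal M_p$. I will show that the set
\[
D_x=\{r\leq p : \exists Q\in \mathcal M_r\ \text{active at}\ \delta\ \text{with}\ Q\in_\delta M_0\ \text{and}\ x\in Q\}
\]
is dense below $p$. Granted density, genericity produces $r\in D_x\cap G$ and a corresponding $Q$; then \cref{projection-membership} gives $Q\restriction\delta\in_\delta M$, while $x\in V_\delta\cap Q$ remains in $Q\restriction\delta$ because the $\delta$-projection fixes $V_\delta$ pointwise.

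To prove density, fix $r\leq p$. Since the collapsing map of $\mathrm{Hull}(M_0,V_\delta)$ fixes $V_\delta$ pointwise and $x\in V_\delta$, we have $x\in M_0$. Working inside $M_0$, using activity of $M_0$ at $\delta$ and $\cof(\delta)<\kappa\leq \pi_{M_0}(\kappa)$, fix a cofinal sequence in $E\cap \delta$ of length $\cof(\delta)$, and apply \cref{cstationaryinV} when $\cof(\delta)=\omega$, or \cref{Magidor-stationary} when $\cof(\delta)>\omega$, inside $M_0$, to select a standard elementary submodel $Q\in M_0$ of $V_\delta$ --- countable in the first case, $\kappa$-Magidor of size $\cof(\delta)$ in the second --- satisfying $x\in Q$, $Q$ active at $\delta$, and $Q$ containing the relevant data from $r\restriction M_0$. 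Since $Q\subseteq V_\delta$ is standard, $Q\restriction\delta=Q$, and $Q\in M_0$ forces $Q\in_\delta M_0$ (with $Q$ as its own witness). A meet of $r$ with the condition built from the closure of $\mathcal M_{r\restriction M_0}\cup\{Q\}$ under meets, via \cref{SP}, delivers $r'\leq r$ with $Q\in \mathcal M_{r'}$, placing $r'\in D_x$.

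The main obstacle is locating such a $Q$ inside $M_0$ meeting all constraints simultaneously for every cofinality $\cof(\delta)<\kappa$. The delicate case is $\cof(\delta)>\omega$, where the chosen Magidor $Q$ must see a cofinal sequence in $E\cap \delta$ of the right length and dominate $r\restriction M_0$; this hinges on the Magidor closure of $M_0$ together with \cref{Magidor-stationary} applied inside $M_0$. Chain compatibility of $\mathcal M_{r'}^\delta$ then follows from closure under meets in \cref{MF} and $Q\in_\delta M_0$.
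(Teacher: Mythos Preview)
Your inclusion $\supseteq$ is fine. The density argument for $\subseteq$, however, has genuine gaps.

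First, a slip: $\pi_{M_0}(\kappa)=\kappa_{M_0}<\kappa$, so the chain ``$\cof(\delta)<\kappa\le\pi_{M_0}(\kappa)$'' is backwards. More substantively, when $\omega<\cof(\delta)<\kappa$ you appeal to \cref{Magidor-stationary} inside $M_0$ to manufacture a Magidor $Q\prec V_\delta$ active at $\delta$, but that proposition requires $\cof(\mu)\ge\kappa$, which fails for $\mu=\delta$. Even if one repairs this by asking instead for a Magidor $Q$ with $\delta\in Q$ and $\kappa_Q>\cof(\delta)$ (so that closure of $Q$ forces $Q\cap E\cap\delta$ to be cofinal), you must first know $\delta\in M_0$ in order to run any elementarity argument inside $M_0$ with $\delta$ as a parameter --- and that is precisely where the hypothesis ``$M$ is not least'' enters: a witness $N\in_\delta M$ with $N$ active at $\delta$ forces $\delta\in M$. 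Your write-up never invokes this hypothesis, yet the statement is plainly false without it.

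The argument in \cite{MV} is shorter and uniform in $\cof(\delta)$: it never searches for $Q$ inside $M_0$. One uses ``not least'' to fix some $N\in\mathcal M_p$ active at $\delta$ with $N\in_\delta M$, picks any countable standard $Q^*\in\mathscr C_{\rm st}$ in $V$ with $p,N,M,x\in Q^*$, and forms $q=p^{Q^*}$ via \cref{topM}. Then $Q:=M\land Q^*$ lies in $\mathcal M_q$ automatically by closure under meets, is active at $\delta$ because $N\in_\delta M$ and $N\in_\delta Q^*$ give $N\in_\delta M\land Q^*$, satisfies $Q\in_\delta M$, and by the meet--trace identity $Q\cap V_\delta=M\cap Q^*\cap V_\delta\ni x$. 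No case split on $\cof(\delta)$, no search inside $M_0$.
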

\begin{proof}
See  \cite[Proposition 4.32]{MV}.
\end{proof}

\begin{proposition}\label{collapse with pure side condition}
The forcing
$\mathbb{P}^{\kappa}_\lambda$  collapses all uncountable cardinals 
below $\kappa$  to $\omega_1$ and those between $\kappa$ and $\lambda$ to $\kappa$. 
\end{proposition}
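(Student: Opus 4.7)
The plan is to read surjections $\omega_1 \twoheadrightarrow \mu$ (for $\mu \in [\omega_1,\kappa)$) and $\kappa \twoheadrightarrow \mu$ (for $\mu \in [\kappa,\lambda)$) directly off the generic filter, using the chain of countable (respectively, Magidor) virtual models provided by $\mathcal{M}_G$. The two ranges are handled by the same template but with different model classes.

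For the first range, fix $\mu \in [\omega_1,\kappa)$. A density argument using \Cref{topM} shows that, for every $\alpha < \mu$, the set of conditions $p$ containing some countable $M \in \mathcal{M}_p$ with $\omega_1,\alpha \in M$ is dense: given $p$, pick a countable $M$ elementary in a large enough structure with $p,\omega_1,\alpha \in M$, and pass to $p^M$. Consequently, in $V[G]$ the collection $\mathcal{C}_\mu := \{ M \cap \mu : M \in \mathcal{M}_G \text{ countable},\ \omega_1 \in M \}$ has union $\mu$. Fixing any $\delta \in E$ with $\delta > \mu$ and $\cof(\delta) < \kappa$ and projecting to level $\delta$, \Cref{chainfilter} gives that these models form a $\delta$-chain, which transfers to linear ordering of $\mathcal{C}_\mu$ under $\subseteq$. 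Strong properness of $\mathbb{P}^\kappa_\lambda$ for $\mathscr{C}_{\rm st}$ (via \Cref{topM,SP}) preserves $\omega_1$, and density also provides that $\{ M \cap \omega_1 : M \in \mathcal{M}_G \text{ countable}\}$ is cofinal in $\omega_1^{V[G]} = \omega_1^V$. Thus the chain $\mathcal{C}_\mu$ is indexed by $\omega_1^{V[G]}$, and concatenating a countable enumeration of each $M \cap \mu$ yields the desired surjection $\omega_1 \twoheadrightarrow \mu$.

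For the second range, fix $\mu \in [\kappa,\lambda)$. The parallel density argument, now using Magidor models (and again \Cref{topM}), shows that every $\alpha < \lambda$ belongs to some $M \in \mathcal{M}_G \cap \mathscr{U}$. The Magidor part of $\mathcal{M}_G$, projected to any sufficiently large $\delta \in E$ of cofinality below $\kappa$, forms a chain by \Cref{chainfilter} and is $\subseteq$-continuous by \Cref{continuity-Magidor}. Strong properness for $\mathscr{U}$, a consequence of \Cref{rest-magidor-prop}, preserves $\kappa$; together with density supplying a club of values $M \cap \kappa$ in $\kappa^{V[G]}$, this pins the chain's length at exactly $\kappa$. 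Each term has cardinality $< \kappa$, so concatenating enumerations of $M \cap \mu$ over the chain gives a surjection $\kappa \twoheadrightarrow \mu$.

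The main obstacle in each range is pinning down the length of the generic chain. The lower bounds ($\geq \omega_1$ and $\geq \kappa$ respectively) are immediate from the density/elementarity arguments. The upper bounds require genuine continuity at the critical limit stages, so that the chain does not keep growing past its target cofinality: for Magidor models this is precisely the content of \Cref{continuity-Magidor}, while for countable models it follows from the fact that the generic sequence $\langle M \cap \omega_1 : M \in \mathcal{M}_G \text{ countable}\rangle$ is forced to be a continuous increasing sequence of countable ordinals, whose length therefore equals $\omega_1^{V[G]}$. Once these continuity facts are in hand, the cardinality computation collapsing every $\mu$ in the two ranges is routine.
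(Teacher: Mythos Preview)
Your treatment of the second range (cardinals between $\kappa$ and $\lambda$) is correct and essentially matches the paper: the Magidor models in $\mathcal{M}_G^\delta$ form a genuine $\in_\delta$-chain (since $\in_\delta$ is transitive when the target is a Magidor model), their $V_\delta$-traces are $\subseteq$-increasing and cover $V_\delta$ by density, and there are at most $\kappa$ of them.

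The first range has a real gap. You claim that the countable models in $\mathcal{M}_G$, projected to a fixed level $\delta$, have $\mu$-traces that are $\subseteq$-linearly ordered and that the resulting chain has length $\omega_1$. Neither holds. The full $\delta$-chain $\mathcal{M}_G^\delta$ interleaves countable and Magidor models, and $\in_\delta$ is \emph{not} transitive through a Magidor intermediary: if $M_1 \in_\delta P \in_\delta M_2$ with $P$ Magidor, $M_1, M_2$ countable, and $\mu \subseteq P$, then $M_2 \cap \mu = (P \land M_2) \cap \mu$, and $P \land M_2$ may sit strictly below $M_1$ in the chain, giving $M_2 \cap \mu \subsetneq M_1 \cap \mu$. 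So the map $M \mapsto M \cap \mu$ is not monotone along the chain, and your sequence $\langle M \cap \omega_1 \rangle$ is not increasing for the same reason. Moreover, by \cref{addingclub} the set $\{\kappa_M : M \in \mathcal{M}_G^\delta\}$ is a club in $\kappa$, and since Magidor models are continuous limits (\cref{continuity-Magidor}) the successor points of this club come from countable models, so there are $\kappa$ many countable models in $\mathcal{M}_G^\delta$, not $\omega_1$ many. The observation that $\{M \cap \omega_1\}$ is cofinal in $\omega_1$ does nothing to bound their number.

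The paper's fix (in \cite{MV}) is to localize: pick one Magidor model $N \in \mathcal{M}_G^\delta$ with $\mu \in N$ (so $\mu \subseteq N$, since $V_{\kappa_N} \subseteq N$), let $N^*$ be the next Magidor model above $N$, and consider only the countable models in the interval $I = (N, N^*)$. On $I$ the relation $\in_\delta$ \emph{is} transitive, so $\{P \cap V_\delta : P \in I\}$ is a strictly $\subseteq$-increasing chain of countable sets; a density argument shows this chain covers $N \cap V_\delta \supseteq \mu$, and a strictly increasing chain of countable sets has length at most $\omega_1$. Hence $|\mu| \leq \omega_1$ in $V[G]$.
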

\begin{proof}
See \cite[Theorems 4.33 and 4.34]{MV}.
\end{proof}
	
Notice  $\mathbb{P}^{\kappa}_\lambda$ is $\lambda$-c.c., see  \cref{lambdacc-iteration}.

\begin{definition}\label{def-clubs} 
Suppose that $G\subseteq \mathbb{P}^\kappa_\lambda$ is $V$-generic, and that  $\alpha \in E$ is of cofinality less than $\kappa$. 
We let $C_\alpha(G)= \{ \kappa_M : M\in \M_G^\alpha\}$. 
\end{definition}

\begin{proposition}\label{addingclub}
Let $G$ be a $V$-generic filter over $\mathbb{P}^\kappa_\lambda$. Then $C_\alpha(G)$ is a club in $\kappa$, for all $\alpha \in E$
of cofinality ${<}\kappa$.  Moreover, if $\alpha < \beta$ then $C_\beta(G)\setminus C_\alpha(G)$ is bounded in $\kappa$. 
\end{proposition}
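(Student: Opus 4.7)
My plan is to establish three things: unboundedness of $C_\alpha(G)$ in $\kappa$, closedness, and the moreover clause on $C_\beta(G) \setminus C_\alpha(G)$.

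For unboundedness, given any $p \in \mathbb{P}^\kappa_\lambda$ and $\beta < \kappa$, I will invoke \Cref{Magidor-stationary} to find a $\kappa$-Magidor model $N$ with $p, \alpha \in N$ and $\kappa_N > \max(\beta, \cof^V(\alpha))$. Since $\cof^V(\alpha) < \kappa_N$ and the Magidor condition yields $V_{\kappa_N} \subseteq N$, elementarity forces $N$ to contain a cofinal sequence in $\alpha$ of length $\cof^V(\alpha)$, so $N \cap E \cap \alpha$ is cofinal in $E \cap \alpha$, i.e., $N$ is strongly active at $\alpha$. Then \Cref{topM} produces $p^N \leq p$ with $N \in \mathcal{M}_{p^N}$, so $p^N \Vdash \kappa_N \in C_\alpha(G)$, establishing density below any condition.

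For closedness, I will fix a limit point $\gamma < \kappa$ of $C_\alpha(G)$ in $V[G]$ and, for concreteness, assume $\gamma \geq \omega_1$ (the case $\gamma < \omega_1$ being treated symmetrically using countable models from \Cref{cstationaryinV}). By \Cref{chainfilter}, the Magidor elements of $\mathcal{M}_G^\alpha$ form a chain linearly ordered by $\in_\alpha$, and the strict monotonicity of $M \mapsto \kappa_M$ along this chain makes it a well-order. By unboundedness, the collection of Magidor $N \in \mathcal{M}_G^\alpha$ with $\kappa_N \geq \gamma$ is nonempty, so it has an $\in_\alpha$-least element $N^*$. I will argue that $\kappa_{N^*} = \gamma$: if instead $\kappa_{N^*} > \gamma$, then $N^*$ is not the minimum of $\mathcal{M}_G^\alpha$, so \Cref{continuity-Magidor} yields $\kappa_{N^*} = \sup\{\kappa_Q : Q \in_\alpha N^*,\ Q \in \mathcal{M}_G^\alpha\}$, and some such $Q$ satisfies $\gamma < \kappa_Q < \kappa_{N^*}$. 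A Magidor $Q$ here would contradict the minimality of $N^*$, while a countable $Q$ would give $\kappa_Q < \omega_1 \leq \gamma$, contradicting $\kappa_Q > \gamma$. Hence $\kappa_{N^*} = \gamma$ and $\gamma \in C_\alpha(G)$.

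For the moreover clause, set $\rho = \cof^V(\alpha) < \kappa$. Any Magidor $M \in \mathcal{M}_G^\beta$ with $\kappa_M > \rho$ and $\alpha \in M$ is active at $\alpha$ by the argument from the unboundedness step, and then $M \restriction \alpha \in \mathcal{M}_G^\alpha$ with $\kappa_{M \restriction \alpha} = \kappa_M$, placing $\kappa_M$ in $C_\alpha(G)$. A density argument parallel to the unboundedness proof, this time insisting that $\alpha$ lie in the Hull of each sufficiently large Magidor model added on top, shows that cofinally many Magidor $M \in \mathcal{M}_G^\beta$ contain $\alpha$; hence $C_\beta(G) \setminus C_\alpha(G)$ is bounded in $\kappa$. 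The main obstacle lies in the closedness step: its success hinges on the subtle continuity provided by \Cref{continuity-Magidor}, combined with the size constraint $\kappa_Q < \omega_1$ that is needed to rule out countable \emph{gap-fillers} above $\gamma$ and thereby drive the contradiction.
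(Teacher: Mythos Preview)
Your closedness argument contains a genuine error: the claim that a countable $Q\in\mathcal M_G^\alpha$ must satisfy $\kappa_Q<\omega_1$ is false. Recall that $\kappa_Q=\sup(Q\cap\kappa)$; a countable elementary submodel of $V_\lambda$ typically contains many cardinals between $\omega_1$ and $\kappa$, so $\kappa_Q$ is an ordinal of countable cofinality that can be arbitrarily large below $\kappa$. Thus, when you pick the $\in_\alpha$-least Magidor $N^*$ with $\kappa_{N^*}\geq\gamma$ and invoke \cref{continuity-Magidor}, nothing prevents a countable $Q\in_\alpha N^*$ from having $\kappa_Q>\gamma$, and your dichotomy collapses. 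The same problem invalidates the hand-waved $\gamma<\omega_1$ case: there is no continuity lemma for countable models analogous to \cref{continuity-Magidor}, so the suggested symmetry is illusory.

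This is not a repairable gap in your route. Closedness of $C_\alpha(G)$ is exactly the reason decorations were added to the pure side-condition forcing $\mathbb M^\kappa_\lambda$; continuity at Magidor models (\cref{continuity-Magidor}) is not enough. The paper's proof proceeds by induction on $\alpha$: assuming a condition $p$ forces a putative limit point $\gamma\notin\dot C_\alpha$, it locates the least $M\in\mathcal M_p^\alpha$ with $\kappa_M>\gamma$ and its predecessor $P$. If $M$ is strongly active at $\alpha$ one enlarges the decoration at $P$ by some $\delta\in M\cap[\gamma,\kappa_M)$, thereby forcing that the next model above $P$ on the $\alpha$-chain already has $\kappa\geq\delta$, so $\gamma$ cannot be a limit point. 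If $M$ is not strongly active, the inductive hypothesis at $\bar\alpha=\sup(M\cap\alpha)$ produces a model of the right height, which is then lifted through $M$ to level $\alpha$. Your argument never touches the decoration, and without it the conclusion simply need not hold.

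Your treatment of the moreover clause also has a gap: showing that cofinally many Magidor $M\in\mathcal M_G^\beta$ contain $\alpha$ does not by itself yield boundedness of $C_\beta(G)\setminus C_\alpha(G)$. What one needs is that once a single model $M_0\in\mathcal M_G^\beta$ is active at $\alpha$, every $N$ above $M_0$ on the $\beta$-chain is active at $\alpha$ (activeness propagates upward along $\in_\beta$), so that $C_\beta(G)\setminus C_\alpha(G)\subseteq\kappa_{M_0}$. Your unboundedness argument, by contrast, is essentially fine.
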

\begin{proof}
See   \cite[Lemma 4.37]{MV}.
\end{proof}
	
\subsection*{The iteration}
For a condition $\mathcal M_p\in\mathbb M^\kappa_\lambda$ and an ordinal $\gamma\in E$, we let 
$$\mathcal M_p(\gamma)\coloneqq\{M\in\mathcal M^{{\rm next}(\gamma)}_p:\gamma\in M\}.$$
In other words, $\mathcal M_p(\gamma)$ consists of those models in $\mathcal M_p{\rest{{\rm next}(\gamma)}}$ that are strongly active at ${\rm next}(\gamma)$.

Let $E^+= E \cup \{ \alpha + 1: \alpha \in E\}$. 
We will define by induction the poset $\mathbb Q^\kappa_\alpha$, for $\alpha \in E^+\cup \{ \lambda \}$. 
Let  ${\rm Fn}(\omega_1,\omega)$ denote the poset of finite partial functions from $\omega_1$  to $\omega$, ordered under reverse inclusion. 
Conditions in $\mathbb Q^\kappa_\alpha$ will be triples of the form $p= (\mathcal M_p,d_p,w_p)$, where $(\mathcal M_p,d_p) \in \mathbb P^\kappa_\alpha$,
and $w_p$ is a finite function with ${\rm dom}(w_p)\subseteq E\cap \alpha$, such that $w_p(\gamma)\in {\rm Fn}(\omega_1,\omega)$, for all $\gamma \in {\rm dom}(w_p)$. 
If $p$ is such a triple and $\gamma <\alpha$ is in $E$,  we let $p\rest \gamma$ denote the triple $(\mathcal M_p\rest \gamma, d_p \rest \gamma, w_p \rest \gamma)$, 
where  $(\mathcal M_p \rest \gamma, d_p\rest \gamma)$  is defined as in $\mathbb P^\kappa_\alpha$ and $w_p \rest \gamma$ is the restriction of $w_p$ to ${\rm dom}(w_p)\cap \gamma$.

	Let us recall that we are working with a suitable structure $(V_\lambda,\in,\kappa,U)$. To be  precise, $U:\lambda\rightarrow V_\lambda$ is a (bookkeeping) function that we regard it as a binary predicate. Thus let us assume that $U$ is onto and for every $x\in V_\lambda$, the set $\{\alpha<\lambda: U(\alpha)=x\}$ is unbounded.
\begin{definition}\label{iteration}
For $\alpha\in E$, we let  $\mathbb Q^\kappa_\alpha$ consist of triples $p=(\mathcal M_p,d_p,w_p)$, where
		
\begin{enumerate}
			
\item $(\mathcal M_p,d_p)\in\mathbb P^\kappa_\alpha$.
			
\item $w_p$ is a finite function  with  ${\rm dom}(w_p)\subseteq E\cap\alpha$ 
such that, if $\gamma\in {\rm dom}(w_p)$,  then  $U(\gamma)$
is a  $\mathbb Q^\kappa_\gamma$-term for  a relation on $\omega_1$
such that $\dot{T}_\gamma =(\check{\omega}_1,U(\gamma))$ is forced to be a tree of height $\omega_1$ without uncountable branches,
$w_p(\gamma)\in {\rm Fn}(\omega_1,\omega)$, and
\[ 
p \rest \gamma  \forces_{\mathbb Q^\kappa_\gamma} \check{w}_p(\gamma) \in \dot{\mathbb S}(\dot{T}_\gamma).
\]
				
\end{enumerate}
		
For conditions $p,q\in\mathbb Q^\kappa_\alpha$, we say $p$ is stronger than $q$ and write $p\leq q$, if and only if,
		
\begin{enumerate}
\item $(\mathcal M_p,d_p)\leq(\mathcal M_q,d_q)$ in $\mathbb P^\kappa_\alpha$,
\item ${\rm dom}(w_p)\supseteq{\rm dom}(w_q)$, and $w_q(\gamma)\subseteq w_p(\gamma)$,  for every $\gamma\in{\rm dom}(w_q)$. 

\end{enumerate}
We let $\mathbb Q^\kappa_{\alpha +1}$ denote the set of triples $p$ as above, but with  ${\rm dom}(w_p)\subseteq \alpha +1$. 
Let  $\mathbb Q^\kappa_\lambda=\bigcup_{\alpha\in E}\mathbb Q^\kappa_\alpha$ with the same order.
\end{definition}  
	
Note that if $\alpha = {\rm min}(E)$ then $\mathbb Q^\kappa_\alpha= \mathbb P^\kappa_\alpha$. 
Also, for $\alpha \in E$, $\mathbb Q^\kappa_{\alpha +1}$ is isomorphic to $\mathbb Q^\kappa_\alpha\ast \dot{\mathbb S}(\dot{T}_\alpha)$, if $U(\alpha)$
has the right form, otherwise $\mathbb Q^\kappa_{\alpha +1}\cong \mathbb Q^\kappa_\alpha$.
For $p \in \mathbb Q^\kappa_\lambda$ and $\alpha \in E$, we let 
\[
p \rest (\alpha +1) = (\mathcal M_p \rest \alpha, d_p\rest \alpha, w_p \rest (\alpha+1)).
\]
The order is transitive and whenever $p\in \mathbb Q^\kappa_\lambda$ and $\alpha \in E^+$,
then $p \rest \alpha \in \mathbb Q^\kappa_\alpha$. Moreover, if $p \leq q$ then $p \rest \alpha \leq q\rest \alpha$.

\begin{proposition}\label{subcomplete}
Suppose $p\in\mathbb Q^\kappa_\beta$, and that $\alpha\leq\beta$ are in  $E^+$. 
If $q$ is a condition in $\mathbb Q^\kappa_\alpha$ that extends $p\rest\alpha$,
then $p$ is compatible with $q$ in $\mathbb Q^\kappa_\beta$.
\end{proposition}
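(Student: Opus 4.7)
The plan is to argue by induction on $\beta\in E^+$ with $\beta\geq\alpha$, constructing an explicit common refinement $r=(\mathcal M_r,d_r,w_r)\in\mathbb Q^\kappa_\beta$ extending both $p$ and $q$. The base case $\beta=\alpha$ is immediate since then $q\leq p$. For the inductive step, I would first handle the side-condition part $(\mathcal M_r,d_r)$ using Lemma~\ref{CS}: set $\mathcal M_r=\mathcal M_p\cup\mathcal M_q$ and define $d_r$ to agree with $d_q$ on $\dom(d_q)$ and with $d_p$ on models $M\in\dom(d_p)$ with $\eta(M)>\alpha$. By Lemma~\ref{CS}, the pair $(\mathcal M_r,d_r)$ lies in $\mathbb P^\kappa_\beta$ and is the greatest lower bound of $(\mathcal M_p,d_p)$ and $(\mathcal M_q,d_q)$.

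For the specializing component, set
\[
\dom(w_r)=\dom(w_q)\cup\bigl(\dom(w_p)\cap[\alpha,\beta)\bigr)
\]
(including $\alpha$ in the second set when $\alpha\in\dom(w_p)$), with $w_r(\gamma)=w_q(\gamma)$ for $\gamma\in\dom(w_q)$ and $w_r(\gamma)=w_p(\gamma)$ for $\gamma\in\dom(w_p)\cap[\alpha,\beta)$. Note that since $q\leq p\restriction\alpha$ in $\mathbb Q^\kappa_\alpha$, for any $\gamma\in\dom(w_p)\cap\alpha$ we have $\gamma\in\dom(w_q)$ and $w_p(\gamma)\subseteq w_q(\gamma)$, so the two prescriptions are compatible. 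It also follows directly from this construction that for every $\gamma\leq\alpha$ in $E^+$, the restriction $r\restriction\gamma$ coincides with $q\restriction\gamma$, while for $\gamma\in(\alpha,\beta)\cap E^+$, the restriction $r\restriction\gamma$ is exactly the triple obtained by applying the same construction to $p\restriction\gamma\in\mathbb Q^\kappa_\gamma$ and $q\in\mathbb Q^\kappa_\alpha$.

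The main verification — and the main obstacle — is clause~(2) of \cref{iteration}: for every $\gamma\in\dom(w_r)\cap E\cap\beta$, one must show
\[
r\restriction\gamma\;\Vdash_{\mathbb Q^\kappa_\gamma}\;\check{w}_r(\gamma)\in\dot{\mathbb S}(\dot T_\gamma).
\]
This splits into three cases. If $\gamma<\alpha$, then $r\restriction\gamma=q\restriction\gamma$ by the observation above, and $q\restriction\gamma$ forces $w_q(\gamma)=w_r(\gamma)\in\dot{\mathbb S}(\dot T_\gamma)$ because $q\in\mathbb Q^\kappa_\alpha$. If $\gamma=\alpha$ (possible when $\alpha\in\dom(w_p)$), then again $r\restriction\alpha=q$, and since $q\leq p\restriction\alpha$ and $p\restriction\alpha\Vdash w_p(\alpha)\in\dot{\mathbb S}(\dot T_\alpha)$, we are done. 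If $\alpha<\gamma<\beta$, then by the inductive hypothesis applied to $p\restriction\gamma$ and $q$, the triple $r\restriction\gamma$ is a condition in $\mathbb Q^\kappa_\gamma$ extending $p\restriction\gamma$, which forces $w_p(\gamma)=w_r(\gamma)\in\dot{\mathbb S}(\dot T_\gamma)$.

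The hardest part is in fact the bookkeeping just above: one must verify that the definition of $r$ is \emph{local}, in the sense that $(r\restriction\gamma)$ is produced by the same recipe from the restricted data $(p\restriction\gamma,q)$. This reduces to checking that $\mathcal M_p\restriction\gamma$, $d_p\restriction\gamma$ and $w_p\restriction\gamma$ restrict componentwise through union with $\mathcal M_q$, $d_q$, $w_q$; this is immediate from how the ambient pure side-condition restrictions and the decoration restrictions are defined. Once locality is established, the three cases above close the induction and yield $r\leq p,q$ in $\mathbb Q^\kappa_\beta$.
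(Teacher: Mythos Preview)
Your proof is correct and follows essentially the same approach as the paper: both construct the common extension $r$ by taking $(\mathcal M_r,d_r)=(\mathcal M_p,d_p)\land(\mathcal M_q,d_q)$ via Lemma~\ref{CS} and defining $w_r$ to agree with $w_q$ below $\alpha$ and with $w_p$ from $\alpha$ onward. The paper simply records that $\mathcal M_r(\gamma)=\mathcal M_q(\gamma)$ for $\gamma\le\alpha$ and $\mathcal M_r(\gamma)=\mathcal M_p(\gamma)$ for $\gamma>\alpha$, then declares it ``evident'' that $r$ is a condition; your inductive verification of clause~(2) and the locality observation $r\rest\gamma=($construction applied to $p\rest\gamma,q)$ is exactly what makes this evident, so you are unpacking the paper's one-line claim rather than taking a different route.
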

\begin{proof}
Let $(\mathcal M_r,d_r)$ be  $(\mathcal M_p,d_p)\land (\mathcal M_q,d_q) $ as defined in \cref{CS}. 
Let also $w_r$ be defined on 
${\rm dom}(w_p)\cup{\rm dom}(w_q)$ by
		
\begin{equation*}
w_r(\gamma)=\left\{ \begin{array}{cl}
w_q(\gamma) & \textrm{if }\gamma<\alpha,\\
w_p(\gamma) & \textrm{if }\gamma\geq\alpha.
\end{array}\right.
\end{equation*}

It is  evident that $r$ is a condition which  extends $p$ and $q$.
\end{proof}
\begin{remark} The condition $r$ from the previous lemma is the greatest lower bound of $p$
and $q$, so we will denote it by $r\coloneqq p\land q$.
\end{remark}
The following corollary is immediate.
\begin{corollary}
For every $\alpha\leq\beta$ in $E^+\cup\{\lambda\}$, $\mathbb Q^\kappa_\alpha$ is a complete suborder of $\mathbb Q^\kappa_\beta$.
\end{corollary}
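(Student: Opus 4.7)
The plan is to verify the three standard requirements for $\mathbb Q^\kappa_\alpha$ to be a complete suborder of $\mathbb Q^\kappa_\beta$: inclusion as a suborder, preservation of incompatibility, and absoluteness of maximal antichains.

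First, inspection of \cref{iteration} shows that any $p \in \mathbb Q^\kappa_\alpha$ is automatically a condition in $\mathbb Q^\kappa_\beta$, since $E \cap \alpha \subseteq E \cap \beta$ and the clauses defining membership only involve restrictions $p\restriction\gamma$ for $\gamma\in\dom(w_p)\subseteq E\cap\alpha$. The two orderings are defined by the same inequalities, so the inclusion is order-preserving in both directions on $\mathbb Q^\kappa_\alpha$.

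Second, I would invoke \cref{subcomplete} to show that $p\restriction\alpha$ is a reduction of any $p\in\mathbb Q^\kappa_\beta$ to $\mathbb Q^\kappa_\alpha$: whenever $q\in\mathbb Q^\kappa_\alpha$ extends $p\restriction\alpha$, the meet $p\land q$ witnesses that $p$ and $q$ are compatible in $\mathbb Q^\kappa_\beta$. This immediately yields preservation of incompatibility, for if $p_0,p_1\in\mathbb Q^\kappa_\alpha$ admit a common extension $r\in\mathbb Q^\kappa_\beta$, then $r\restriction\alpha\in\mathbb Q^\kappa_\alpha$ extends both $p_0$ and $p_1$ (using $p_i\restriction\alpha=p_i$).

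Finally, for the maximality of antichains, let $A\subseteq\mathbb Q^\kappa_\alpha$ be a maximal antichain and fix $p\in\mathbb Q^\kappa_\beta$. By maximality applied to $p\restriction\alpha\in\mathbb Q^\kappa_\alpha$, pick $q\in A$ and $q'\in\mathbb Q^\kappa_\alpha$ with $q'\leq q,\,p\restriction\alpha$. By the reduction property from \cref{subcomplete}, $p$ and $q'$ are compatible in $\mathbb Q^\kappa_\beta$, hence so are $p$ and $q$. Thus $A$ remains maximal in $\mathbb Q^\kappa_\beta$. There is no real obstacle, since \cref{subcomplete} already encapsulates the hard work; the corollary is then a direct unfolding of the standard characterization of completeness.
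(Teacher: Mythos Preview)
Your proof is correct and follows the same approach as the paper. The paper simply declares the corollary ``immediate'' from \cref{subcomplete} and provides no further argument; you have spelled out the routine verification that the restriction map $p\mapsto p\restriction\alpha$ supplies reductions, which is exactly what the paper intends.
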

\begin{proof}[\nopunct]
\end{proof}

\begin{proposition}\label{topMiteration}
Let $p\in \mathbb Q^{\kappa}_\lambda$ and let $M\in \mathscr C \cup \mathscr U$ be such that $p\in M$.
Then there is a  condition $p^M\leq p$ with $M\in \M_{p^M}$.
\end{proposition}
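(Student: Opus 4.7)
The plan is to take $p^M$ to have side-condition part $(\M_{p^M}, d_{p^M})$ defined exactly as in \cref{M on top 1} (applied to the $\mbb P$-part of $p$) and to set $w_{p^M} := w_p$. By construction $M \in \M_{p^M}$, so what has to be verified is that $p^M$ is a valid condition in $\mbb Q^\kappa_\lambda$ and that $p^M \leq p$.

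The extension $p^M \leq p$ should require no new work: the side-condition clause is \cref{topM} verbatim, and the $w$-clauses in \cref{iteration} are trivial because $w_{p^M} = w_p$. The substantive task is therefore to check the forcing clause in \cref{iteration}: for each $\gamma \in \dom(w_{p^M}) = \dom(w_p)$, one needs $(p^M) \rest \gamma \Vdash_{\mbb Q^\kappa_\gamma} w_{p^M}(\gamma) \in \dot{\mbb S}(\dot T_\gamma)$.

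We handle this by enumerating $\dom(w_p) = \{\gamma_0 < \cdots < \gamma_{n-1}\}$ and proving by induction on $j \leq n$ that $(p^M) \rest \gamma_j$ is a valid condition in $\mbb Q^\kappa_{\gamma_j}$ and satisfies $(p^M) \rest \gamma_j \leq p \rest \gamma_j$ in $\mbb Q^\kappa_{\gamma_j}$. For $j = 0$ the $w$-part of $(p^M) \rest \gamma_0$ is empty, so both statements reduce to the corresponding facts about $\mbb P^\kappa_{\gamma_0}$, which follow from \cref{topM}. In the inductive step, the forcing clauses required of $(p^M) \rest \gamma_{j+1}$ at levels $\gamma_0, \ldots, \gamma_j$ are precisely those supplied by the inductive hypothesis: since $w_{p^M} = w_p$ and $p \rest \gamma_l \Vdash w_p(\gamma_l) \in \dot{\mbb S}(\dot T_{\gamma_l})$, this is inherited by the stronger condition $(p^M) \rest \gamma_l$. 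The extension property at level $\gamma_{j+1}$ again reduces to \cref{topM} on the side-condition part and to $w_{p^M} = w_p$ on the rest. Taking $j = n$ yields the claim for $p^M$ itself.

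The main technical obstacle is the implicit claim that the construction of \cref{M on top 1} commutes with the restriction operation $\rest \gamma$: that is, the side-condition part of $(p^M) \rest \gamma$ must coincide (up to $\sim$) with the condition produced by applying \cref{M on top 1} to $p \rest \gamma$ together with the appropriate projection of $M$. We expect this compatibility to follow from \cref{projection-membership} together with the interaction of the meet operation with projections established in \cite{MV}.
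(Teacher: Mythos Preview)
Your proposal is correct and matches the paper's approach exactly: the paper sets $p^M=(\M_{p^M},d_{p^M},w_p)$ with $(\M_{p^M},d_{p^M})$ as in \cref{M on top 1} and simply declares the result clear. Your inductive unpacking of that ``clear'' is accurate.

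One simplification: the ``main technical obstacle'' you flag is not needed. You do not require that the construction of \cref{M on top 1} commute with $\rest\gamma$; all you need for the inductive step is $(\M_{p^M},d_{p^M})\rest\gamma \leq (\M_p,d_p)\rest\gamma$ in $\mathbb P^\kappa_\gamma$, and this follows directly from $(\M_{p^M},d_{p^M})\leq(\M_p,d_p)$ (\cref{topM}) together with the routine fact that restriction is order-preserving on $\mathbb P^\kappa_\lambda$. With that and $w_{p^M}=w_p$, the $\mathbb Q$-extension and the forcing clauses at each $\gamma_l$ are inherited automatically.
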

\begin{proof}
Let $\mathcal M_{p^M}$ and $d_{p^M}$ be defined as in \cref{M on top 1}. We let $p^M=(\mathcal M_{p^M},d_{p^M},w_p)$. 
It is clear that $p^M$ is the required condition. 
\end{proof}

 We need the following lemma in several proofs.
\begin{lemma}\label{amalgamation}
    Let $\alpha \in E$. Suppose $M\in\mathscr C\cup\mathscr U$ with $\eta(M)>\alpha$ and $\alpha \in M$. Let $p\in\mathbb Q^\kappa_\alpha$ be a condition 
with $M\rest \alpha \in \mathcal M_p$. Let $\beta=\max(\dom(w_p)\cap M)$. Let $G_{\beta+1}$ be a $V$-generic filter on $\mathbb Q^\kappa_{\beta+1}$ with $p\restriction{\beta+1}\in G_{\beta+1}$. Assume that $q\in M\cap\mathbb Q^\kappa_\alpha$ is a condition such that:
\begin{itemize}
\item $( \mathcal M_q,d_q)\leq (\mathcal M_p,d_p)\rest M$,
\item  $M\cap {\rm dom}(w_p)\subseteq{\rm dom}(w_q)$, and
\item $q\rest{\beta+1}\in G_{\beta+1}$.
\end{itemize}
Then $p$ and $q$ are compatible in $\mathbb Q^\kappa_\alpha$. 
\end{lemma}
\begin{proof}
    First, by \cref{SP},  we have that $(\mathcal M_q, d_q)$ and $(\mathcal M_p,d_p)$ are compatible in $\mathbb P^\kappa_\alpha$ and
the meet  $(\mathcal M_q, d_q) \wedge (\mathcal M_p,d_r)$ exists. Let us denote this meet by $(\mathcal M,d)$. 
Let us also fix $r\in G_{\beta+1}$  extending  $p\rest{\beta+1}$ and $q\rest{\beta+1}$. 
Let $(\mathcal M_s,d_s)$ be the meet $(\mathcal M,d) \wedge (\mathcal M_r,d_r)$ as defined in \cref{CS}. 
We now define $w_s$ on ${\rm dom}(w_r) \cup {\rm dom}(w_p) \cup {\rm dom}(w_q)$ by letting:	
\begin{equation*}
w_s(\gamma)=\begin{cases} 
w_r(\gamma) & \mbox{if } \gamma \in {\rm dom}(w_r) \\
w_q(\gamma) & \mbox{if }  \gamma\in {\rm dom}(w_q)\setminus {\rm dom}(w_r) ,\\
w_p(\gamma) & \mbox{if } \gamma \in {\rm dom}(w_p) \setminus ( {\rm dom}(w_r) \cup {\rm dom}(w_q) ).
\end{cases}
\end{equation*}
It is easy to see that $s=(\mathcal M_s,d_s,w_s)$ is a condition in $\mathbb Q^\kappa_\alpha$ extending $p$ and $q$.
\end{proof}
\begin{proposition}\label{proper-iteration}
Let $\alpha \in E$. Suppose $M\in\mathscr C\cup\mathscr U$ with $\eta(M)>\alpha$ and $\alpha \in M$. Let $p$ be a condition 
with $M\rest \alpha \in \mathcal M_p$. Then
\begin{enumerate} 
\item if $p\in \mathbb Q^\kappa_\alpha$, then $p$ is $(M,\mathbb Q^\kappa_\alpha)$-generic,
\item if $p\in \mathbb Q^\kappa_{\alpha+1}$,  then $p$ is $(M,\mathbb Q^\kappa_{\alpha+1})$-generic.
\end{enumerate} 
\end{proposition}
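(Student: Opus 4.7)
The plan is to prove (1) and (2) simultaneously by induction on $\alpha \in E$. The base case $\alpha = \min E$ is straightforward: here $\mathbb Q^\kappa_\alpha$ coincides with $\mathbb P^\kappa_\alpha$, so (1) follows from \cref{SP} together with the standard fact that a strongly $(M,\mathbb P)$-generic condition is $(M,\mathbb P)$-generic.

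For the inductive step toward (1), assume the conclusion at every $\alpha' < \alpha$ in $E$. I will extend the restriction operation $q\mapsto q\rest M$ from \cref{restriction-countable} (resp.\ \cref{restriction-magidor}) to $\mathbb Q^\kappa_\alpha$ as follows. The side-condition component of $q\rest M$ is defined exactly as in those definitions, and on the $w$-component I set ${\rm dom}(w_{q\rest M}) = {\rm dom}(w_q)\cap M$ together with
\[
w_{q\rest M}(\gamma) = \begin{cases} w_q(\gamma) & \text{if } M\in \mathscr U, \\ w_q(\gamma)\rest (M\cap\omega_1) & \text{if } M\in \mathscr C,\end{cases}
\]
for each $\gamma \in {\rm dom}(w_q) \cap M$. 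A routine check, using \cref{SP} and the inductive hypothesis at the coordinates $\gamma < \alpha$, confirms that $q\rest M \in M \cap \mathbb Q^\kappa_\alpha$ and $q \leq q\rest M$. To secure $(M,\mathbb Q^\kappa_\alpha)$-genericity I then aim at the stronger conclusion that every $r \in M \cap \mathbb Q^\kappa_\alpha$ with $r \leq q\rest M$ is compatible with $q$, by constructing an explicit meet $r \land q$: its side-condition part is given by \cref{SP}, and its $w$-coordinate is set to $w_r(\gamma) \cup w_q(\gamma)$ for every $\gamma \in {\rm dom}(w_r) \cup {\rm dom}(w_q)$.

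The hardest step, and the one I expect to be the main obstacle, is verifying that $(r\land q)\rest \gamma$ forces $w_r(\gamma)\cup w_q(\gamma)$ to be a specializing condition in $\dot{\mathbb S}(\dot T_\gamma)$. A potential same-color clash has the form $(\xi_1,n) \in w_r(\gamma)$ with $\xi_1 \in M\cap\omega_1$ and $(\xi_2,n) \in w_q(\gamma) \setminus w_r(\gamma)$; the inclusion $w_q(\gamma)\cap (M\times\omega) \subseteq w_r(\gamma)$ guaranteed by $r \leq q\rest M$ forces $\xi_2 \in \omega_1 \setminus M$, so the task reduces to showing that $(r\land q)\rest \gamma$ forces $\xi_1$ and $\xi_2$ to be incomparable in $\dot T_\gamma$. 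By induction, $(r\land q)\rest \gamma$ is $(M,\mathbb Q^\kappa_\gamma)$-generic, and since $\gamma \in M$ we have $U(\gamma)\in M$, so the tree-name $\dot T_\gamma$ and its $M$-part are accessible to the $M$-generic filter. Any forced comparability between $\xi_1 \in M$ and $\xi_2 \notin M$ would, via the elementarity of $M$ and the $\omega_1$-approximation properties built into the side-condition forcing, yield a cofinal branch through $\dot T_\gamma$ that contradicts the no-uncountable-branches clause imposed by \cref{iteration}.

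Finally, part (2) follows from (1) as follows. When $U(\alpha)$ has the form required in \cref{iteration}, $\mathbb Q^\kappa_{\alpha+1}$ is forcing equivalent to $\mathbb Q^\kappa_\alpha \ast \dot{\mathbb S}(\dot T_\alpha)$, and $\dot{\mathbb S}(\dot T_\alpha)$ is forced to be ccc because $\dot T_\alpha$ has no uncountable branches; the standard two-step preservation of properness lifts $(M,\mathbb Q^\kappa_\alpha)$-genericity of $p\rest \alpha$ to $(M,\mathbb Q^\kappa_{\alpha+1})$-genericity of $p$. Otherwise $\mathbb Q^\kappa_{\alpha+1} = \mathbb Q^\kappa_\alpha$ and there is nothing further to prove.
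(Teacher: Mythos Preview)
Your overall scaffolding is fine: the induction on $\alpha$, the base case via \cref{SP}, and the derivation of (2) from (1) using that $\dot{\mathbb S}(\dot T_\alpha)$ is forced to be ccc all match the paper. The problem is the inductive step for countable $M$.

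You are not merely proving $(M,\mathbb Q^\kappa_\alpha)$-genericity; by aiming to show that \emph{every} $r\in M$ with $r\leq q\rest M$ is compatible with $q$, you are trying to establish that $p$ is $(M,\mathbb Q^\kappa_\alpha)$-\emph{strongly} generic. That stronger statement is false for countable $M$, and your ``hard step'' cannot be completed. Here is a concrete obstruction at the first nontrivial stage. Take $\gamma=\min(E)$ and suppose $\dot T_\gamma$ is (a check-name for) a ground-model tree. Choose $\xi_3\in M\cap\omega_1$ and $\xi_2\in\omega_1\setminus M$ with $\xi_3<_{T_\gamma}\xi_2$. Let $q$ have $w_q(\gamma)=\{(\xi_2,0)\}$; then $w_{q\rest M}(\gamma)=\varnothing$. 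Let $r\in M$ extend $q\rest M$ with $w_r(\gamma)=\{(\xi_3,0)\}$. Any common extension $s\leq r,q$ must have $\{(\xi_2,0),(\xi_3,0)\}\subseteq w_s(\gamma)$, but $\xi_3<_{T_\gamma}\xi_2$ is forced, so $w_s(\gamma)$ is never a specializing condition. Thus $r$ and $q$ are incompatible, and no branch argument can save you: the comparability of a single $\xi_3\in M$ with a single $\xi_2\notin M$ does not produce a cofinal branch, and neither elementarity of $M$ nor the $\omega_1$-approximation property yields a contradiction.

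The paper avoids this entirely by \emph{not} attempting strong genericity for countable $M$. Instead, given a dense $D\in M$ (with $p\in D$ without loss of generality), it sets $\beta=\max(\dom(w_p)\cap M)$, passes to a $V$-generic $G_{\beta+1}$ on $\mathbb Q^\kappa_{\beta+1}$ containing $p\rest(\beta+1)$, and uses elementarity of $M[G_{\beta+1}]$ (legitimate by the inductive instance of (2) at $\beta$) to find $q\in D\cap M$ with $q\rest(\beta+1)\in G_{\beta+1}$ and $(\mathcal M_q,d_q)\leq(\mathcal M_p,d_p)\rest M$. Then $p\rest(\beta+1)$ and $q\rest(\beta+1)$ lie in a common generic, so some $r\in G_{\beta+1}$ extends both; the amalgam $s$ uses $w_r$ at coordinates $\leq\beta$, $w_q$ at the remaining coordinates of $q$, and $w_p$ at the remaining coordinates of $p$. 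The point is that at every coordinate $w_s(\gamma)$ equals one of $w_r(\gamma),w_q(\gamma),w_p(\gamma)$, each already forced to be a specializing condition by a weaker restriction---no union of two specializing conditions ever needs to be checked.
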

\begin{proof}
First note that under our assumptions both $\mathbb Q^\kappa_\alpha$ and $\mathbb Q^\kappa_{\alpha+1}$ belong to $M$. 
$\mathbb Q^\kappa_{\alpha+1}$ is either equal to $\mathbb Q^\kappa_\alpha$ or is isomorphic to $\mathbb Q^\kappa_\alpha \ast \dot{\mathbb S}(\dot{T}_\alpha)$.
Since $\dot{\mathbb S}(\dot{T}_\alpha)$ is forced to be ccc, (2) follows from (1). 
If $\alpha = {\rm min}(E)$ then $\mathbb Q^\kappa_\alpha$ is isomorphic to $\mathbb P^{\kappa}_\alpha$, and by \cref{SP}, it is then strongly proper for all models 
$M\in\mathscr C\cup\mathscr U$ with $\eta(M)>\alpha$.

Suppose now that $\alpha$ is not the least element of $E$ and  (2) holds for all $\beta < \alpha$. 
Let  $D\in M$ be a dense subset of  $\mathbb Q^\kappa_\alpha$.
We may assume without loss of generality that $p\in D$.
Let $\beta = {\rm max}({\rm dom}(w_p)\cap M)$. 
Pick a $V$-generic filter $G_{\beta+1}$ on $\mathbb Q^\kappa_{\beta+1}$ such that $p\rest{\beta+1}\in G_{\beta+1}$.
By elementarity of $M[G_{\beta+1}]$ in ${\rm Hull}(M,V_{\eta(M)})[G_{\beta+1}]$, there is a condition $q\in D\cap M[G_{\beta+1}]$ satisfying the  following.
\begin{enumerate}
\item $( \mathcal M_q,d_q)\leq (\mathcal M_p,d_p)\rest M$,
\item  $M\cap {\rm dom}(w_p)\subseteq{\rm dom}(w_q)$, and
\item $q\rest{\beta+1}\in G_{\beta+1}$.
\end{enumerate}
By the inductive assumption, $p \rest \beta+1$ is $(M,\mathbb Q^\kappa_{\beta+1})$-generic, and hence $M[G_{\beta+1}]\cap V=M$.
Therefore, we can find such $q$ in $M$. By \cref{amalgamation}, $p$ and $q$ are compatible.
\end{proof}

\begin{proposition}\label{proper-iteration2}
Suppose that $\lambda^*>\lambda$  is a sufficiently large regular cardinal. Let $p\in\mathbb Q^\kappa_\lambda$. Suppose that  $M^*\prec H_{\lambda^*}$ with $\kappa,\lambda\in M^*$ is such that $M\coloneqq M^*\cap V_\lambda \in\mathcal M_p$. Then $p$ is $(M^*,\mathbb Q^\kappa_\lambda)$-generic.
\end{proposition}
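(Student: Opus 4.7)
The argument follows the strategy of Proposition \ref{proper-iteration}, with $M^*$ playing the role of the \emph{test model} and $M = M^* \cap V_\lambda$ supplying the corresponding virtual model in the side conditions. Fix a dense open $D \in M^*$ of $\mathbb Q^\kappa_\lambda$ and a $p' \leq p$ with $p' \in D$; replacing $p$ by $p'$ I may assume $p \in D$. The task is to produce $q \in D \cap M^*$ compatible with $p$.

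Set $\beta = \max(\dom(w_p) \cap M^*)$ (taking a default element of $E \cap M^*$ if this set is empty), and pick a $V$-generic filter $G_{\beta+1}$ on $\mathbb Q^\kappa_{\beta+1}$ with $p \restriction (\beta+1) \in G_{\beta+1}$. Since $M \in \mathcal M_p$ and $\beta+1 \in M^*$, Proposition \ref{proper-iteration} applied at level $\beta+1$ yields that $p\restriction(\beta+1)$ is $(M^*, \mathbb Q^\kappa_{\beta+1})$-generic, so $M^*[G_{\beta+1}] \cap V = M^*$. By elementarity of $M^*[G_{\beta+1}]$ in $H_{\lambda^*}[G_{\beta+1}]$, I then find $q \in D \cap M^*[G_{\beta+1}] = D \cap M^*$ satisfying $(\mathcal M_q, d_q) \leq (\mathcal M_p, d_p)\restriction M$, $M^* \cap \dom(w_p) \subseteq \dom(w_q)$, and $q\restriction(\beta+1) \in G_{\beta+1}$.

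To conclude, I build the meet $s = p \land q$. Fixing $r \in G_{\beta+1}$ extending both $p\restriction(\beta+1)$ and $q\restriction(\beta+1)$, the pure side-condition part $(\mathcal M_s, d_s)$ is assembled as in Proposition \ref{SP} (in the Magidor case, Proposition \ref{rest-magidor-prop}), and the iteration part $w_s$ is patched using $w_r$ at levels $\leq \beta$, $w_q$ at levels $> \beta$ contributed by $q$, and $w_p$ at the remaining levels $> \beta$ from $p$. The main obstacle I expect is justifying that Proposition \ref{proper-iteration} can be invoked with $M^*$ in place of a virtual model — one needs that $M^*$'s elementarity in $H_{\lambda^*}$ delivers the relevant density and compatibility facts at each inductive step — and, relatedly, that the patched $w_s$ at each $\gamma \in \dom(w_q) \cap M^*$ above $\beta$ is forced by $s\restriction\gamma$ to be a valid specializing function in $\mathbb S(T_\gamma)$, which should follow by combining ccc-ness (Proposition \ref{spec-poset-approx}) with the genericity already secured through level $\beta$.
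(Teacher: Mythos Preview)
Your approach is essentially the paper's: the paper's entire proof is ``Observe that $\eta=\sup(M^*\cap\lambda)$ is in $E$, and that $M$ is an $\eta$-model. The rest is as in the proof of Proposition~\ref{proper-iteration}.'' The obstacle you flag at the end is precisely what this one observation dispatches. Since $M=M^*\cap V_\lambda$ is a standard $\eta$-model already lying in $\mathcal M_p$, Proposition~\ref{proper-iteration} applies to $M$ (not to $M^*$) at level $\beta$; and because $\mathbb Q^\kappa_{\beta+1}$ together with all its dense subsets lies in $V_\lambda$ (as $\lambda$ is inaccessible), every dense $D\in M^*$ is already in $M$, so $(M,\mathbb Q^\kappa_{\beta+1})$-genericity and $(M^*,\mathbb Q^\kappa_{\beta+1})$-genericity coincide. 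The same remark handles the passage from $M[G_{\beta+1}]$ to $M^*[G_{\beta+1}]$.

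Your final worry about $w_s$ at levels $\gamma\in\dom(w_q)$ above $\beta$ is unnecessary: since $q$ is a condition, $q\restriction\gamma$ already forces $w_q(\gamma)\in\dot{\mathbb S}(\dot T_\gamma)$, and the amalgam $s$ satisfies $s\restriction\gamma\leq q\restriction\gamma$; no appeal to ccc or Proposition~\ref{spec-poset-approx} is needed there. Likewise at levels $\gamma\in\dom(w_p)\setminus M^*$ one uses $w_p(\gamma)$ and $s\restriction\gamma\leq p\restriction\gamma$.
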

\begin{proof}
Observe that $\eta={\rm sup}(M^*\cap\lambda)$ is in $E$, and that  $M$ is an $\eta$-model. 
The rest is as in the proof of \cref{proper-iteration}.
\end{proof}
\begin{corollary}\label{properness whole iteration}
For every $\alpha\in E^+\cup\{\lambda\}$, $\mathbb Q^\kappa_\alpha$ is proper.
\end{corollary}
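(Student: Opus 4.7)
The plan is to repackage Propositions \ref{proper-iteration} and \ref{proper-iteration2} into a single properness statement in the standard form. Fix $\alpha\in E^+\cup\{\lambda\}$, a sufficiently large regular cardinal $\theta$ with $\mathbb Q^\kappa_\alpha\in H_\theta$, a countable $M^*\prec H_\theta$ containing $\kappa,\lambda,\alpha,\mathbb Q^\kappa_\alpha$, and a condition $p\in \mathbb Q^\kappa_\alpha\cap M^*$. The goal is to produce $q\leq p$ which is $(M^*,\mathbb Q^\kappa_\alpha)$-generic.

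First I would set $M\coloneqq M^*\cap V_\lambda$ and $\eta\coloneqq\sup(M\cap\lambda)$. Since $V_\lambda\in M^*$, elementarity yields $M\prec V_\lambda$, and \cref{alphasharpinE} then gives $\eta\in E$, whence $M\in\mathscr C_\eta$. By \cref{cstationaryinV}, $\mathscr C_{\rm st}$ contains a club in $\mathcal P_{\omega_1}(V_\lambda)$, and the preimage of this club under the map $M^*\mapsto M^*\cap V_\lambda$ is a club in $[H_\theta]^\omega$; since properness needs genericity only on a club of such $M^*$, we may assume $M\in\mathscr C_{\rm st}$.

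Next comes a case analysis on $\alpha$. If $\alpha=\lambda$, apply \cref{topMiteration} to obtain $q=p^M\leq p$ with $M\in\mathcal M_q$, and then \cref{proper-iteration2} directly yields $(M^*,\mathbb Q^\kappa_\lambda)$-genericity of $q$. If $\alpha<\lambda$, then $\alpha\in M^*\cap V_\lambda=M$, and the projection $M\restriction\alpha$ is a countable virtual model in $\mathscr C_\alpha$. Applying \cref{topMiteration} inside $\mathbb Q^\kappa_\alpha$ (which stays within $\mathbb Q^\kappa_\alpha$ since $\eta(M\restriction\alpha)=\alpha$) yields $q=p^{M\restriction\alpha}\leq p$ with $M\restriction\alpha\in\mathcal M_q$, and \cref{proper-iteration}, in whichever of its two clauses matches the form of $\alpha$, gives that $q$ is $(M,\mathbb Q^\kappa_\alpha)$-generic. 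Finally, any dense $D\in M^*$ with $D\subseteq\mathbb Q^\kappa_\alpha\subseteq V_\lambda$ satisfies $D\cap M^*=D\cap M$, so $(M,\mathbb Q^\kappa_\alpha)$-genericity of $q$ promotes at once to $(M^*,\mathbb Q^\kappa_\alpha)$-genericity.

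The only delicate point is the identification of $M=M^*\cap V_\lambda$ as a genuine member of $\mathscr C_{\rm st}$, i.e.\ verifying that ${\rm Hull}(M,V_\eta)$ meets the requirements built into $\mathscr A_\eta$ and the surrounding definitions; this is however precisely the content of the club argument underlying \cref{cstationaryinV}, so no new work is needed. With that in hand, the corollary is a clean merger of \cref{topMiteration} with Propositions \ref{proper-iteration} and \ref{proper-iteration2}.
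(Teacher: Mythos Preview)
Your argument is correct and matches the paper's intended approach: the paper leaves the proof blank, treating the corollary as an immediate consequence of Lemma~\ref{cstationaryinV}, Proposition~\ref{topMiteration}, and Propositions~\ref{proper-iteration}--\ref{proper-iteration2}, which is precisely the combination you spell out.

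Two small points of bookkeeping. First, for $\alpha\in E^+\setminus E$, say $\alpha=\beta+1$ with $\beta\in E$, the projection $M\restriction\alpha$ is undefined (projections are only taken to levels of $E$), and $\mathscr C_\alpha$ makes no sense; you should instead form $q=p^{M\restriction\beta}$ and then invoke clause~(2) of Proposition~\ref{proper-iteration} with $\beta$ playing the role of that proposition's~$\alpha$. Second, applying Proposition~\ref{topMiteration} with the model $M\restriction\alpha$ (rather than $M$) tacitly uses $p\in M\restriction\alpha$; this is true---the collapse of ${\rm Hull}(M,V_\alpha)$ fixes each $N\in\mathcal M_p$ because its transitive hull ${\rm Hull}(N,V_{\eta(N)})\in M$ has size $\leq|V_\alpha|$ and hence embeds into ${\rm Hull}(M,V_\alpha)$---but you do not verify it. A slightly cleaner route that sidesteps both issues is to apply Proposition~\ref{topMiteration} with $M$ itself in $\mathbb Q^\kappa_\lambda$ and then take $q=(p^M)\restriction\alpha$ (or $\restriction\beta$ in the successor case), which automatically lands in $\mathbb Q^\kappa_\alpha$ with $M\restriction\alpha\in\mathcal M_q$.
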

\begin{proof}[\nopunct]
\end{proof}

\begin{corollary}\label{U-strong}
For every $\alpha\in E^+\cup\{\lambda\}$, $\mathbb Q^\kappa_\alpha$ is strongly proper for $\mathscr U$.
\end{corollary}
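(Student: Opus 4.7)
The plan is to adapt the inductive arguments of Propositions \ref{proper-iteration} and \ref{proper-iteration2} by replacing the use of Proposition \ref{SP} with Proposition \ref{rest-magidor-prop}, thereby upgrading properness to strong properness in the Magidor case. We induct on $\alpha \in E^+ \cup \{\lambda\}$. The base case $\alpha = {\rm min}(E)$ follows because $\mathbb Q^\kappa_\alpha = \mathbb P^\kappa_\alpha$, which is strongly proper for $\mathscr U$ by the remark following Proposition \ref{rest-magidor-prop}. For the successor and limit cases, fix $M \in \mathscr U$ and $p \in M \cap \mathbb Q^\kappa_\alpha$; by Proposition \ref{topMiteration}, extend $p$ to $p^{*}$ with $M \restriction \alpha \in \mathcal M_{p^{*}}$. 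We claim that $p^{*}$ is $(M, \mathbb Q^\kappa_\alpha)$-strongly generic.

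For any $r \leq p^{*}$, define
\[
r \restriction M := \bigl( (\mathcal M_r, d_r) \restriction M,\ w_r \restriction M \bigr),
\]
where the side-condition part is as in Definition \ref{restriction-magidor} (lying in $M$ by Proposition \ref{rest-magidor-prop}) and $w_r \restriction M$ is the restriction of the finite function $w_r$ to $M \cap E \cap \alpha$. To check that $r \restriction M$ is itself a condition in $M \cap \mathbb Q^\kappa_\alpha$, one must verify, for each $\gamma \in \dom(w_r) \cap M$, that $(r \restriction M) \restriction \gamma$ forces $w_r(\gamma) \in \dot{\mathbb S}(\dot T_\gamma)$. Here the inductive hypothesis is essential: since $\mathbb Q^\kappa_\gamma$ is strongly proper for $\mathscr U$ by induction, $r \restriction \gamma$ is $(M, \mathbb Q^\kappa_\gamma)$-strongly generic, so any $u \in M \cap \mathbb Q^\kappa_\gamma$ extending $(r \restriction \gamma) \restriction M = (r \restriction M) \restriction \gamma$ is compatible with $r \restriction \gamma$; hence no such $u$ can force the negation of $w_r(\gamma) \in \dot{\mathbb S}(\dot T_\gamma)$, and by elementarity in $M$ the required forcing relation holds.

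Finally, given $s \in M \cap \mathbb Q^\kappa_\alpha$ with $s \leq r \restriction M$, we construct a common extension $t = ((\mathcal M_r, d_r) \wedge (\mathcal M_s, d_s),\ w_t)$, where the side-condition meet is furnished by Proposition \ref{rest-magidor-prop} and
\[
w_t(\gamma) = \begin{cases} w_s(\gamma) & \text{if } \gamma \in \dom(w_s), \\ w_r(\gamma) & \text{if } \gamma \in \dom(w_r) \setminus \dom(w_s). \end{cases}
\]
The function $w_t$ is well-defined because $s \leq r \restriction M$ implies $w_s(\gamma) \supseteq w_r(\gamma)$ for every $\gamma \in \dom(w_r) \cap M$. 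Each forcing clause at $\gamma \in \dom(w_t)$ transfers to $t$ since $t \restriction \gamma$ extends both $r \restriction \gamma$ and $s \restriction \gamma$. The principal obstacle is precisely the forcing-theoretic step in the preceding paragraph: the weakening from $r \restriction \gamma$ down to the in-$M$ restriction $(r \restriction M) \restriction \gamma$ could a priori destroy the forced membership in $\dot{\mathbb S}(\dot T_\gamma)$, and it is only the inductive strong-properness hypothesis at stage $\gamma$ that ensures this restriction remains a legitimate condition.
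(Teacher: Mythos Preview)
Your inductive construction is correct, and in fact it anticipates the explicit restriction map the paper introduces later in Lemma~\ref{Canonical Uproper-iteration} (the definition $p\restriction N=(\mathcal M_{p\restriction N},d_{p\restriction N},w_p\restriction N)$ and the compatibility proof there). The elementarity step you use to transfer the forcing clause $w_r(\gamma)\in\dot{\mathbb S}(\dot T_\gamma)$ down to $(r\restriction M)\restriction\gamma$ is sound, since $\gamma$, $w_r(\gamma)$, and $\dot T_\gamma$ all lie in $M$ and $M$ is elementary in ${\rm Hull}(M,V_{\eta(M)})$, where the $\mathbb Q^\kappa_\gamma$-forcing relation agrees with the real one.

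However, the paper's proof is a two-line shortcut you missed: it simply combines Proposition~\ref{proper-iteration} (which gives ordinary properness for $\mathscr U$) with Lemma~\ref{proper vs strong proper} (Neeman's lemma). The point is that Magidor models are $0$-guessing---their transitive collapse is some $V_{\bar\gamma}$, so every bounded subset is guessed---and for $0$-guessing models, any $(M,\mathbb P)$-generic condition is automatically $(M,\mathbb P)$-strongly generic. No induction, no explicit restriction map, no verification of forcing clauses is needed at this stage. Your approach buys an explicit, canonical restriction $r\restriction M$ (which the paper only develops later when it needs it for the quotient analysis), but at the cost of reproving machinery that the abstract lemma handles instantly.
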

\begin{proof}
This follows from \cref{proper-iteration,proper vs strong proper}.
\end{proof}

\begin{definition}\label{derived-sets} Let $\alpha \in E$ and let $G_\alpha$ be $V$-generic over $\mathbb Q^\kappa_\alpha$. 
In the model $V[G_\alpha]$, let $\mathscr C_{\rm st}[G_\alpha]$ denote the set of all $M\in \mathscr C_{\rm st}$ such that 
$\eta (M)>\alpha$, $\alpha \in M$ and $M\rest \alpha \in \mathcal M^\alpha_{G_\alpha}$. 
\end{definition} 
	
As in \cite[Lemma 5.2]{MV} we have the following. 
	
\begin{proposition}\label{derived-stationary} 
${\mathscr C}_{\rm st}[G_\alpha]$ is a stationary subset of ${\mathcal P}_{\omega_1}(V_\lambda)$ in the model $V[G_\alpha]$.
\end{proposition}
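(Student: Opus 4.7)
The plan is to run the standard properness-based stationarity argument of Lemma 5.2 of \cite{MV} in the present setting. It suffices to show that for any $\mathbb Q^\kappa_\alpha$-name $\dot F$ for a function $V_\lambda^{<\omega}\to V_\lambda$ and any $p\in\mathbb Q^\kappa_\alpha$, one can find $q\leq p$ and $M\in V$ such that $q\Vdash M\in \mathscr C_{\rm st}[G_\alpha]$ and $q\Vdash M$ closed under $\dot F$. Every club in $\mathcal P_{\omega_1}(V_\lambda)^{V[G_\alpha]}$ contains the countable substructures closed under some such $\dot F$, so this reduction is enough.

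The construction proceeds as follows. Fix a sufficiently large regular $\theta$ and a countable $M^*\prec H_\theta$ containing $p,\dot F,\alpha,\kappa,\lambda$, chosen so that $M\coloneqq M^*\cap V_\lambda$ is active at $\alpha$ in the sense of \cref{def-active}. Setting $\eta\coloneqq\sup(M\cap\lambda)$, a combination of \cref{alphasharpinE} (applied to successors in $E$ of ordinals of $M\cap\lambda$), closure of $E$, and the definability of rank yields $\eta\in E$ and $M\subseteq V_\eta$; hence $M\in \mathscr C_{\rm st}$ is a standard $\eta$-model with $\eta>\alpha$ and $\alpha\in M$. Apply \cref{topMiteration} to produce $p^M\leq p$ in $\mathbb Q^\kappa_\lambda$ with $M\in\mathcal M_{p^M}$, and let $q\coloneqq p^M\restriction\alpha\in \mathbb Q^\kappa_\alpha$. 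Then $M\restriction\alpha\in\mathcal M_q$, and activity of $M$ at $\alpha$ upgrades this to $M\restriction\alpha\in\mathcal M^\alpha_q$, so $q\Vdash M\in \mathscr C_{\rm st}[G_\alpha]$. That $q$ is moreover $(M^*,\mathbb Q^\kappa_\alpha)$-generic follows from \cref{proper-iteration2} (adapted with $\alpha$ in place of $\lambda$), and a routine density argument — for each $\bar x\in M^{<\omega}$ the set of conditions deciding $\dot F(\bar x)$ is dense and lies in $M^*$, hence meets $G_\alpha\cap M^*$ by genericity — forces $\dot F^{G_\alpha}(\bar x)\in M^*\cap V_\lambda=M$, so $M$ is closed under $\dot F^{G_\alpha}$.

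I expect the principal obstacle to be the very first step: arranging that $M$ is active at $\alpha$. For a countable $M$ with $\alpha\in M$ this is automatic when $\alpha$ is a successor point of $E$ (the predecessor in $E$ is definable from $\alpha$, and so lies in $M$) and is easily achieved by inserting a cofinal $\omega$-sequence through $E\cap\alpha$ into $M^*$ when ${\rm cof}(E\cap\alpha)=\omega$. The remaining case, when ${\rm cof}(E\cap\alpha)\geq\omega_1$ — so in particular ${\rm cof}(\alpha)\geq\kappa$ — is handled by the observation that $\mathbb Q^\kappa_\alpha$ is then forcing equivalent to $\bigcup\{\mathbb Q^\kappa_\gamma:\gamma\in E\cap\alpha\}$, which reduces the stationarity claim to that for a sufficiently large $\gamma<\alpha$ in $E$ already covered by the previous subcase.
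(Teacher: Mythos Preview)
Your strategy is correct and coincides with the argument of \cite[Lemma~5.2]{MV} to which the paper defers: pick a countable $M^*\prec H_\theta$ containing the relevant parameters, set $M=M^*\cap V_\lambda$, add $M$ (equivalently $M\rest\alpha$) to the side conditions, and use $(M^*,\mathbb Q^\kappa_\alpha)$-genericity via \cref{proper-iteration} to obtain closure of $M$ under $\dot F$.

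Your case analysis for activity of $M$ at $\alpha$, however, contains an error. The claim that $\cof(E\cap\alpha)\geq\omega_1$ implies $\cof(\alpha)\geq\kappa$ is false: when $\alpha$ is a limit point of $E$, the set $E\cap\alpha$ is club in $\alpha$, so $\cof(E\cap\alpha)=\cof(\alpha)$, and nothing prevents $\cof(\alpha)$ from lying in $[\omega_1,\kappa)$. In that range neither your $\omega$-sequence trick nor your reduction to a smaller $\gamma$ applies. The right argument uses the actual definition of \emph{active} (via the hull, not just $M$ itself): whenever $\cof(\alpha)<\kappa$, any countable $M^*\prec H_\theta$ with $\alpha,E\in M^*$ already contains, by elementarity, both $\cof(\alpha)$ and a cofinal map $f:\cof(\alpha)\to E\cap\alpha$; since $\cof(\alpha)\in M\cap\kappa$ gives $\cof(\alpha)<\kappa_M$, one obtains $f[\cof(\alpha)]\subseteq{\rm Hull}(M,V_{\kappa_M})$, and $M$ is active at $\alpha$ automatically---no special arrangement is needed. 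Only the case $\cof(\alpha)\geq\kappa$ genuinely requires the forcing-equivalence reduction you mention, and there the statement itself must be read through that equivalence, since no countable $\alpha$-model can be active at such $\alpha$.
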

\begin{proof}[\nopunct]
\end{proof}

Let $\alpha\in E^+$. Assume that 	$G_\alpha$ is a $V$-generic filter on $\mathbb Q^\kappa_\alpha$. 
One can form the quotient forcing $\mathbb Q^\kappa_\lambda/G_\alpha$.
Suppose that also $M\in\mathscr C_{>\alpha}$, $M\rest \alpha \in \mathcal M^\alpha_{G_\alpha}$, and $p\in M\cap \mathbb Q^\kappa_\lambda/G_\alpha$. 
It is then easily seen that $p^M$ belongs to $\mathbb Q^\kappa_\lambda/G_\alpha$. 
By an argument, as in  \cref{proper-iteration},  we have the following.
	
\begin{proposition}\label{quotient-proper}
Let $\lambda^*>\lambda$ be a sufficiently large regular cardinal. Suppose that $\alpha\in E$, and $G_\alpha\subseteq \mathbb Q^\kappa_\alpha$ is a $V$-generic filter. 
Let $p \in \mathbb Q^\kappa_\lambda/G_\alpha$. Suppose $M^*\prec H_{\lambda^*}$ contains all the relevant objects, and $M= M^*\cap V_\lambda$ belongs to $\mathcal M_p$. 
Then $p$ is $(M^*[G_\alpha],\mathbb Q^\kappa_\lambda/G_\alpha)$-generic.
\end{proposition}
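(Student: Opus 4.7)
The plan is to adapt the proof of \cref{proper-iteration}/\cref{proper-iteration2} to the quotient setting in $V[G_\alpha]$. First, by \cref{alphasharpinE}, setting $\eta = \sup(M^* \cap \lambda)$ one has $\eta \in E$ and $M$ is an $\eta$-model, exactly as in \cref{proper-iteration2}. Fix a dense open set $D \in M^*[G_\alpha]$ of $\mathbb Q^\kappa_\lambda/G_\alpha$; I want to show that $D \cap M^*[G_\alpha]$ is pre-dense below $p$. Without loss of generality $p \in D$, and I pick a $\mathbb Q^\kappa_\alpha$-name $\dot D \in M^*$ for $D$.

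Next, let $\beta = \max({\rm dom}(w_p) \cap M^* \cap [\alpha, \lambda))$ if this set is nonempty; otherwise set $\mathbb Q^\kappa_{\beta+1} = \mathbb Q^\kappa_\alpha$ and $G_{\beta+1} = G_\alpha$. I would then extend $G_\alpha$ to a $V$-generic $G_{\beta+1}$ on $\mathbb Q^\kappa_{\beta+1}$ containing $p \rest (\beta+1)$, which is possible because $(p \rest (\beta+1)) \rest \alpha = p \rest \alpha \in G_\alpha$. By \cref{proper-iteration2} applied at stage $\beta+1$, the condition $p \rest (\beta+1)$ is $(M^*, \mathbb Q^\kappa_{\beta+1})$-generic, and therefore $M^*[G_{\beta+1}] \cap V = M^*$. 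In $H_{\lambda^*}[G_{\beta+1}]$ the condition $p$ itself witnesses the existence of a $q$ satisfying: (i) $q \in \dot D[G_{\beta+1}]$; (ii) $(\mathcal M_q, d_q) \leq (\mathcal M_p, d_p) \rest M$; (iii) $M^* \cap {\rm dom}(w_p) \subseteq {\rm dom}(w_q)$; and (iv) $q \rest (\beta+1) \in G_{\beta+1}$. Elementarity of $M^*[G_{\beta+1}]$ yields such a $q \in M^*[G_{\beta+1}]$, but since $q \in V$ one concludes $q \in M^* \subseteq M^*[G_\alpha]$; moreover $q \in \dot D[G_\alpha] = D$ because $\dot D$ is a $\mathbb Q^\kappa_\alpha$-name.

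To finish I would build a common extension $s$ of $p$ and $q$ exactly as in \cref{proper-iteration}: pick $r \in G_{\beta+1}$ extending both $p \rest (\beta+1)$ and $q \rest (\beta+1)$; form $(\mathcal M_p, d_p) \wedge (\mathcal M_q, d_q)$ via \cref{SP}; combine it with $(\mathcal M_r, d_r)$ using \cref{CS}; and amalgamate the three $w$-parts with priority given to $r$, then $q$, then $p$ on their domains. Since $s \rest (\beta+1) \leq r \in G_{\beta+1}$, one has $s \rest \alpha \in G_\alpha$, so $s \in \mathbb Q^\kappa_\lambda/G_\alpha$, witnessing the compatibility of $p$ and $q$ in the quotient.

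The main subtlety is transferring the witness $q$ from $M^*[G_{\beta+1}]$ down into $M^*$: this rests entirely on the properness of $\mathbb Q^\kappa_{\beta+1}$ with respect to $M^*$, which is exactly \cref{proper-iteration2}. Everything else---the meet construction for the side condition parts, the amalgamation of the $w$-functions, and the verification that the resulting condition belongs to the quotient---is essentially identical to the non-quotient argument, exploiting the elementary observation that compatible extensions of conditions in $G_{\beta+1}$ automatically restrict into $G_\alpha$.
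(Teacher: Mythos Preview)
Your approach is correct and is exactly what the paper intends: it simply writes ``By an argument as in \cref{proper-iteration}'' and leaves the details to the reader, and you have supplied those details faithfully. The reduction to an intermediate stage $\beta+1$, the use of properness of $\mathbb Q^\kappa_{\beta+1}$ to get $M^*[G_{\beta+1}]\cap V = M^*$, the reflection of the witness $q$ down into $M^*$, and the amalgamation via \cref{SP} and \cref{CS} are all right.

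One small point deserves sharpening. Your final sentence asserts ``since $s\rest(\beta+1)\leq r\in G_{\beta+1}$, one has $s\rest\alpha\in G_\alpha$'', but being \emph{stronger} than a member of a filter does not by itself put you in the filter. What actually makes this work is that, by the explicit construction, $s\rest\alpha$ is (up to $\sim$) just $r\rest\alpha$: the side-condition part satisfies $\mathcal M_s\rest\alpha\sim\mathcal M_r\rest\alpha$ because $r$ already extends both $p\rest(\beta+1)$ and $q\rest(\beta+1)$ (so every model of $\mathcal M_p\rest\alpha$ and $\mathcal M_q\rest\alpha$, and every relevant meet, is accounted for in $\mathcal M_r\rest\alpha$), and $w_s\rest\alpha=w_r\rest\alpha$ for the same reason. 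Hence $s\rest\alpha\sim r\rest\alpha\in G_\alpha$, which gives $s\rest\alpha\in G_\alpha$. With that adjustment your argument is complete and matches the paper's intended proof.
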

\begin{proof}[\nopunct]	\end{proof}
	
Suppose $\alpha \in E^+$ and let $G_\alpha$ be $V$-generic over $\mathbb Q^\kappa_\alpha$.  In $V[G_\alpha]$ fix a large regular cardinal $\lambda^*$ and let $\mathscr S$ be the collection of all countable models of the form $M[G_\alpha]$, where $M\prec H_{\lambda^*}^V$ contains the relevant objects, and $M \cap V_\lambda \in \mathscr C_{\rm st}[G_\alpha]$. Now, by  \cref{derived-stationary,quotient-proper} we have the following. 
	
\begin{corollary}\label{quotient-stat-proper}  $\mathscr S$ is stationary in ${\mathcal P}_{\omega_1}([H_{\lambda^*}[G_\alpha])$ and
$\mathbb Q^\kappa_\lambda/G_\alpha$ is $\mathscr S$-proper in $V[G_\lambda]$. 
\end{corollary}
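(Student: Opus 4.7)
The plan is to establish the two claims — stationarity of $\mathscr S$ in $\mathcal P_{\omega_1}(H_{\lambda^*}[G_\alpha])$, and $\mathscr S$-properness of $\mathbb Q^\kappa_\lambda/G_\alpha$ — separately, each by reducing to results already established in $V$ and $V[G_\alpha]$.

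For stationarity, I would take an arbitrary algebra $F:[H_{\lambda^*}[G_\alpha]]^{<\omega}\to H_{\lambda^*}[G_\alpha]$ in $V[G_\alpha]$ and, using canonical $\mathbb Q^\kappa_\alpha$-names, pull it back to an auxiliary algebra $F'$ on $H_{\lambda^*}^V$ living in $V[G_\alpha]$ with the property that any countable $M\prec H_{\lambda^*}^V$ closed under $F'$ and containing $\dot F$, $\dot G_\alpha$ and all other relevant parameters satisfies that $M[G_\alpha]$ is closed under $F$. I would then invoke Proposition \ref{derived-stationary}, which tells us that $\mathscr C_{\rm st}[G_\alpha]$ is stationary in $\mathcal P_{\omega_1}(V_\lambda)$ in $V[G_\alpha]$. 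Since the map $M\mapsto M\cap V_\lambda$ is the usual projection from countable elementary submodels of $H_{\lambda^*}^V$ to countable subsets of $V_\lambda$, a standard stationary-set argument produces a countable $M\prec H_{\lambda^*}^V$ closed under $F'$ with $M\cap V_\lambda\in\mathscr C_{\rm st}[G_\alpha]$; then $M[G_\alpha]\in\mathscr S$ is closed under $F$, as required.

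For $\mathscr S$-properness, I would fix $N=M^*[G_\alpha]\in\mathscr S$ containing $\mathbb Q^\kappa_\lambda/G_\alpha$, together with a condition $p\in N\cap \mathbb Q^\kappa_\lambda/G_\alpha$. Setting $M=M^*\cap V_\lambda$, the definition of $\mathscr S$ guarantees that $M\in\mathscr C_{\rm st}$ with $\alpha\in M$, $\eta(M)>\alpha$ and $M\rest\alpha\in\mathcal M^\alpha_{G_\alpha}$. Using the top-$M$ construction of Proposition \ref{topMiteration} in $V$, I would form $p^M\leq p$ so that $M\in\mathcal M_{p^M}$. The paragraph immediately preceding Proposition \ref{quotient-proper} yields $p^M\in\mathbb Q^\kappa_\lambda/G_\alpha$. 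Proposition \ref{quotient-proper} applied to $p^M$ and $M^*$ then gives that $p^M$ is $(M^*[G_\alpha],\mathbb Q^\kappa_\lambda/G_\alpha)$-generic, which is exactly $(N,\mathbb Q^\kappa_\lambda/G_\alpha)$-genericity.

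The main subtlety lies in the first paragraph: one needs to leverage the stationarity of $\mathscr C_{\rm st}[G_\alpha]$ — a statement in $V[G_\alpha]$ about countable subsets of $V_\lambda$ — to produce a countable elementary submodel of $H_{\lambda^*}^V$ in $V[G_\alpha]$ whose trace on $V_\lambda$ lies in $\mathscr C_{\rm st}[G_\alpha]$ while being simultaneously closed under a $V[G_\alpha]$-algebra that has no canonical ground-model pullback. This becomes routine once one observes that $\mathbb Q^\kappa_\alpha$ is proper (Corollary \ref{properness whole iteration}) and thus preserves stationary sets of countable sets from $V$, but the bookkeeping of names and parameters must be laid out carefully in order to conclude closure of $M[G_\alpha]$ under $F$ and not merely of $M$ under $F'$.
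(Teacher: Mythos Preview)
Your proposal is correct and follows the same approach as the paper, which gives no explicit proof and simply states that the corollary follows from Propositions~\ref{derived-stationary} and~\ref{quotient-proper}. Your write-up supplies the standard lifting and top-model details that the paper leaves implicit.
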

\begin{proof}[\nopunct]	\end{proof}

\begin{proposition}\label{lambdacc-iteration}
$\mathbb Q^\kappa_\lambda$ satisfies the $\lambda$-c.c.
\end{proposition}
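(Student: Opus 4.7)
The plan is a $\Delta$-system argument that leverages the inaccessibility of $\lambda$. First, I would show by induction on $\alpha \in E\cap\lambda$ that $|\mathbb Q^\kappa_\alpha|<\lambda$: a condition $p\in\mathbb Q^\kappa_\alpha$ is determined by three pieces of finite data, namely $\mathcal M_p$ whose members are virtual models of cardinality $<\kappa$ sitting inside $V_\alpha$; the decoration $d_p$ with values in $\mathcal P_\omega(V_\kappa)$; and the function $w_p$ with finite domain in $E\cap\alpha$ and values in $\mathrm{Fn}(\omega_1,\omega)$. Since $\kappa,\lambda$ are inaccessible and $|V_\alpha|<\lambda$, the elementary count yields $|\mathbb Q^\kappa_\alpha|\leq |V_\alpha|<\lambda$.

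Assume toward contradiction that $A=\{p_\xi:\xi<\lambda\}$ is an antichain. Since $\mathbb Q^\kappa_\lambda = \bigcup_{\alpha\in E}\mathbb Q^\kappa_\alpha$ and no single $\mathbb Q^\kappa_\alpha$ with $\alpha<\lambda$ can absorb all of $A$, I attach to each $p_\xi$ the finite support $F_\xi = \{\eta(M):M\in\mathcal M_{p_\xi}\}\cup\mathrm{dom}(w_{p_\xi})$. By the $\Delta$-system lemma together with regressive pruning, I refine to a $\lambda$-sized subfamily such that (i) the $F_\xi$ form a $\Delta$-system with root $R$; (ii) each $F_\xi\setminus R$ lies above $\sup R$, and for $\xi<\eta$ the tails $F_\xi\setminus R$ and $F_\eta\setminus R$ span pairwise disjoint intervals; and (iii) fixing $\gamma^*$ as the least element of $E$ above $\sup R$, the restrictions $p_\xi\restriction\gamma^*$ all coincide with a common condition $p^*\in\mathbb Q^\kappa_{\gamma^*}$, which uses $|\mathbb Q^\kappa_{\gamma^*}|<\lambda$.

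I then take two conditions $p_\xi,p_\eta$ with $\xi<\eta$ in the refined family and produce a common extension $r = (\mathcal M_r,d_r,w_r)$: let $\mathcal M_r$ be the closure of $\mathcal M_{p_\xi}\cup\mathcal M_{p_\eta}$ under meets, $d_r = d_{p_\xi}\cup d_{p_\eta}$, and $w_r = w_{p_\xi}\cup w_{p_\eta}$; these unions are well-defined functions since the two sides coincide on $p^*$. Verifying that $r\in\mathbb Q^\kappa_\lambda$ splits into four checks: that no cross-meet generates a genuinely new virtual model above $\gamma^*$; that the $\delta$-chain condition of Definition~\ref{MF}(2) holds at every $\delta\in E$; that the decoration clause $(\divideontimes)$ is preserved; and that each $w_r(\gamma)$ is forced by $r\restriction\gamma$ into $\dot{\mathbb S}(\dot T_\gamma)$. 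The middle two are routine given the $\Delta$-system structure, since below $\gamma^*$ both sides agree with $p^*$, while within the $\xi$-interval (resp.\ $\eta$-interval) only $\mathcal M_{p_\xi}$ (resp.\ $\mathcal M_{p_\eta}$) contributes $\delta$-active models; the last follows because $\dot T_\gamma$ depends only on $\mathbb Q^\kappa_\gamma$ and the relevant specialization conditions were already forced by the respective initial segment.

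The main obstacle is the first check: for $M\in\mathcal M_{p_\xi}$ and $N\in\mathcal M_{p_\eta}$ whose new activities lie in disjoint intervals above $\gamma^*$, one must verify that the meet $M\wedge N$ already lies in $\mathcal M_{p^*}$. This is where I would carefully invoke Lemma~\ref{projection-membership} and the meet identity $N\wedge M\restriction\gamma = (N\restriction\gamma)\wedge (M\restriction\gamma)$ from \cite{MV}: since $a(M)\cap a(N)$ is bounded by $\sup R$, the meet $M\wedge N$ coincides with $(M\restriction\gamma^*)\wedge(N\restriction\gamma^*)$, which belongs to $\mathcal M_{p^*}$ by closure of $p^*$ under meets. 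Once this is established, $r$ witnesses compatibility of $p_\xi$ and $p_\eta$, contradicting the assumption that $A$ is an antichain.
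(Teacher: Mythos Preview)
Your overall strategy---a $\Delta$-system argument exploiting the inaccessibility of $\lambda$---matches the paper's, but your choice of support set is wrong and this breaks the central step. You run the $\Delta$-system on the \emph{finite} sets $F_\xi=\{\eta(M):M\in\mathcal M_{p_\xi}\}\cup\mathrm{dom}(w_{p_\xi})$. The level $\eta(M)$, however, does not control where $M$ is \emph{active}: by Notation~\ref{a(M)} the activity set $a(M)$ is a closed subset of $E$ of size ${<}\kappa$ that need not be contained in $F_\xi$. Concretely, a model $N\in\mathcal M_{p_\eta}$ with $\eta(N)$ in the $\eta$-interval may well be active at some $\delta$ lying in the $\xi$-interval. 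This falsifies your assertion that ``within the $\xi$-interval only $\mathcal M_{p_\xi}$ contributes $\delta$-active models,'' so the verification of the $\delta$-chain condition (Definition~\ref{MF}(2)) for $\mathcal M_r$ breaks down. The same issue undermines your claim that $a(M)\cap a(N)$ is bounded by $\sup R$ for cross-pairs $M\in\mathcal M_{p_\xi}$, $N\in\mathcal M_{p_\eta}$.

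The paper avoids this by taking the $\Delta$-system on the sets $a(p)=\bigcup\{a(M):M\in\mathcal M_p\}$, which have size ${<}\kappa$ rather than finite; the generalized $\Delta$-system lemma applies since $\lambda$ is inaccessible. With root $a$ and $\gamma=\max(a)$, one then thins by counting so that $\mathcal M_p\restriction\gamma$ and $w_p\restriction d$ (where $d$ is the root of the $\mathrm{dom}(w_p)$'s) are constant. After this, for distinct $p,q$ and any $\delta\in E$ one has either $\delta\leq\gamma$, where $\mathcal M_p^\delta=\mathcal M_q^\delta$, or $\delta$ lies in at most one of $a(p),a(q)$; either way $(\mathcal M_p\cup\mathcal M_q)^\delta$ is already a $\delta$-chain, and no separate closure under meets is needed.
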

\begin{proof}
Assume that $A\subseteq \mathbb Q^{\kappa}_\lambda$ is of size $\lambda$. For every $p\in \mathbb{P}^\kappa_\lambda$, let $a(p)=\bigcup \{ a(M): M\in \M_p\}$, which is a closed subset of $E$ of size ${<}\kappa$. By a standard $\Delta$-system argument, we can find a subset $B$ of $A$ of size $\lambda$  so that there are $a$ and $d$ subsets of  $E$ such that $a(p)\cap a(q)= a$ and  ${\rm dom}(w_p)\cap{\rm dom}(w_q)=d$, for all distinct $p,q\in B$. Note that $a$ is closed, and if we let $\gamma = \max(a)$, then $\gamma \in E$. Since $B$ has size $\lambda$, by a simple counting argument, we can assume there is $\M\in \mathbb M^\kappa_\gamma$ such that $\M_p\rest \gamma = \M$ and  that  $w_p\rest d=w_q\rest d$, for all $p\in B$.  Now, pick distinct $p,q\in B$, and define $\M_r= \M_p \cup \M_q$, $d_r= d_p \cup d_q$, and also $w_r=w_p\cup w_q$. Let $r=(\M_r,d_r,w_r)$. It is straightforward to check that $r\in \mathbb Q^\kappa_\lambda$ and $r\leq p,q$. 
\end{proof}
Putting everything together, we have the following.
\begin{corollary}
$\mathbb Q^\kappa_\lambda$ preserves $\omega_1$, $\kappa$ and $\lambda$, and forces that $\kappa=\omega^{V[G_\lambda]}_2$ and $\lambda=\omega^{V[G_\lambda]}_3$.
\end{corollary}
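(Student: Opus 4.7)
The plan is to verify the three preservation claims and then read off the cardinal identifications from the collapsing behaviour of the pure side conditions established in \cite{MV}. Preservation of $\omega_1$ is immediate from \cref{properness whole iteration} (properness), and preservation of $\lambda$ from \cref{lambdacc-iteration} (the $\lambda$-c.c.). The substantive preservation is that of $\kappa$.

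For $\kappa$, I would combine \cref{U-strong}, which provides strong properness for $\mathscr U$, with the stationarity of $\mathscr U_{\rm st}$ in $\mathcal P_\kappa(V_\lambda)$ (\cref{cstationaryinV}). The standard argument runs: if some $p$ forced a cofinal map $\dot f\colon \check\mu\to\check\kappa$ with $\mu<\kappa$, pick a Magidor $M\in\mathscr U_{\rm st}$ containing $p,\dot f,\mu$ and all other relevant parameters with $\mu\subseteq M$, and let $q\leq p$ be strongly $(M,\mathbb Q^\kappa_\lambda)$-generic. Strong genericity forces $M[\dot G]\cap V=M$, so by elementarity $\dot f^{\dot G}[\mu]\subseteq M\cap\kappa=\kappa_M<\kappa$, contradicting cofinality.

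For the identifications $\kappa=\omega_2^{V[G_\lambda]}$ and $\lambda=\omega_3^{V[G_\lambda]}$, I would exhibit a $V$-generic filter for $\mathbb P^\kappa_\lambda$ inside $V[G_\lambda]$ via the forgetful map $\pi\colon (\M_p,d_p,w_p)\mapsto(\M_p,d_p)$. To see $\pi$ is a projection, note that for any $p\in\mathbb Q^\kappa_\lambda$ and $q\leq \pi(p)$ in $\mathbb P^\kappa_\lambda$, the triple $r=(\M_q,d_q,w_p)$ is a condition in $\mathbb Q^\kappa_\lambda$: the only nontrivial check is that for each $\gamma\in\dom(w_p)$, $r\rest\gamma\forces w_p(\gamma)\in\dot{\mathbb S}(\dot T_\gamma)$, which follows since $r\rest\gamma\leq p\rest\gamma$ and $p\rest\gamma$ already forces this. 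Moreover $r\leq p$ and $\pi(r)\leq q$. Hence $\pi[G_\lambda]$ generates a $V$-generic filter on $\mathbb P^\kappa_\lambda$, and \cref{collapse with pure side condition} collapses every uncountable $V$-cardinal below $\kappa$ to $\omega_1$ and every $V$-cardinal in $(\kappa,\lambda)$ to $\kappa$. Combined with the preservation of $\omega_1,\kappa,\lambda$, this forces $\kappa=\omega_2^{V[G_\lambda]}$ and $\lambda=\omega_3^{V[G_\lambda]}$.

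The main delicate point is the $\kappa$-preservation step, specifically the fact that strong genericity for a Magidor $M$ yields $M[\dot G]\cap V=M$ in this $\kappa$-sized setting (as opposed to the countable case already used for countable strong properness via \cref{SP implies approx}). Once this standard consequence of strong properness is in hand, the rest of the argument is bookkeeping from \cite{MV}.
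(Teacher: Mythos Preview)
Your proposal is correct and follows essentially the same route as the paper: properness for $\omega_1$, $\mathscr U$-strong properness for $\kappa$, the $\lambda$-c.c.\ for $\lambda$, and the fact that $\mathbb P^\kappa_\lambda$ sits completely inside $\mathbb Q^\kappa_\lambda$ to invoke the collapsing from \cref{collapse with pure side condition}. The paper phrases the last point as ``$\mathbb P^\kappa_\lambda$ is a complete suborder of $\mathbb Q^\kappa_\lambda$'' and leaves it at ``easily seen''; your explicit projection $\pi(\M_p,d_p,w_p)=(\M_p,d_p)$ is precisely a verification of this, and your detailed argument for $\kappa$-preservation from $\mathscr U$-strong properness is the standard one the paper is implicitly invoking when it cites \cref{U-strong}.
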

\begin{proof}
The preservation of $\omega_1$ and $\kappa$ is guaranteed by \cref{properness whole iteration,U-strong}, respectively. It is easily seen that $\mathbb P^\kappa_\lambda$ is a complete suborder of $\mathbb Q^\kappa_\lambda$, and hence
\cref{collapse with pure side condition,lambdacc-iteration}  imply that in  generic extensions by $\mathbb Q^\kappa_\lambda$,
$\kappa=\omega_2$ and $\lambda=\omega_3$.
\end{proof}

\begin{proposition}\label{approx}
For every $\alpha\in E^+\cup\{\lambda\}$, $\mathbb Q^\kappa_\alpha$ has the $\omega_1$-approximation property.
\end{proposition}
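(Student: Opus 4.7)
The plan is to prove the proposition by induction on $\alpha \in E^+ \cup \{\lambda\}$, following the three natural cases: base, successor in $E^+$, and limit in $E$ (including $\alpha=\lambda$).

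For the base case $\alpha = \min(E)$, the forcing $\mathbb Q^\kappa_\alpha$ coincides with the pure side condition forcing $\mathbb P^\kappa_\alpha$, which is strongly proper for countable models with $\alpha$ as an element by \cref{SP} and the remarks following it. Hence \cref{SP implies approx} yields the $\omega_1$-approximation property. For the successor case $\alpha = \beta + 1$ with $\beta \in E$, the forcing $\mathbb Q^\kappa_\alpha$ is either equal to $\mathbb Q^\kappa_\beta$, or forcing equivalent to the two-step iteration $\mathbb Q^\kappa_\beta \ast \dot{\mathbb S}(\dot T_\beta)$. The first subcase is immediate from the inductive hypothesis. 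In the second, $\mathbb Q^\kappa_\beta$ has the $\omega_1$-approximation property by induction, and since $\dot T_\beta$ is forced to be a tree of height $\omega_1$ without cofinal branches, $\dot{\mathbb S}(\dot T_\beta)$ has the $\omega_1$-approximation property over $V[G_\beta]$ by \cref{spec-poset-approx}. Applying \cref{transitivity of approx} to the chain $V \subseteq V[G_\beta] \subseteq V[G_{\beta+1}]$ completes this case.

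The substantive case is when $\alpha \in E$ with $\cof(\alpha) < \kappa$, or $\alpha = \lambda$. Fix $p_0 \in \mathbb Q^\kappa_\alpha$ forcing that $\dot f : \check\mu \to 2$ is $\omega_1$-approximated over $V$; we seek $q \leq p_0$ deciding $\dot f$. Choose a sufficiently large regular $\theta$ and a countable $M^* \prec H_\theta$ containing $p_0, \dot f, \mu, \alpha$ and $\mathbb Q^\kappa_\alpha$; by \cref{cstationaryinV} we may arrange that $M := M^* \cap V_\lambda \in \mathscr C_{\rm st}$, and by \cref{topMiteration} we strengthen $p_0$ to a condition $p$ with $M \rest \alpha \in \mathcal M_p$. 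By \cref{proper-iteration2}, $p$ is then $(M^*, \mathbb Q^\kappa_\alpha)$-generic. A standard density-genericity argument, using that $\dot f$ is forced to be $\omega_1$-approximated and that $M^* \cap \mu$ is countable in $V$, produces $p^* \leq p$ and $g \in M^*$ with $\dom(g) = M^* \cap \mu$ such that $p^* \Vdash \dot f \rest (M^* \cap \mu) = \check g$. If $g$ is guessed in $M^*$ by some $h \in M^*$, then \cref{decision is guessed} gives $p^* \Vdash \dot f = \check h$ and we are done. Otherwise, $g$ is not guessed in $M^*$, and we derive a contradiction by adapting the Baumgartner/Chodounsk\'y–Zapletal argument in the proof of \cref{spec-poset-approx}: using \cref{Baumgartner} we locate inside $M^*$ a cofinal family of extensions of $p^* \rest M^*$ whose forced values for $\dot f$ on their own domains must disagree with $g$; then, applying \cref{quotient-proper} at some $\beta \in E \cap M^* \cap \alpha$ and the inductive hypothesis on the quotient $\mathbb Q^\kappa_\alpha / G_\beta$ (via \cref{approx does not add guessing function} to transfer the failure-of-guessing property through $V[G_\beta]$), one member of this family turns out to be compatible with $p^*$, contradicting $p^* \Vdash \dot f \rest (M^* \cap \mu) = \check g$.

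The hardest part will be orchestrating the incompatibility argument of \cref{spec-poset-approx} simultaneously for all the specializing commitments $w_p(\gamma)$ appearing in $p^*$, while maintaining the meet-closed $\alpha$-chain structure of $\mathcal M_p$ and the decoration constraint $(\divideontimes)$ under the compatibility furnished by \cref{subcomplete}. The quotient decomposition from \cref{subcomplete} together with the quotient properness of \cref{quotient-proper} is the right framework to bridge decisions made below $\beta$ and the remaining tail above $\beta$, and is what ultimately lets the finite-support $w_p$ component interact coherently with the virtual-model side condition component.
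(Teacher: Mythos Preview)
Your overall architecture (base case via strong properness, successor via \cref{spec-poset-approx} and \cref{transitivity of approx}, limit via a Baumgartner-style contradiction) matches the paper's, but the limit step contains a genuine confusion that would prevent the argument from closing.

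The inductive hypothesis gives you the $\omega_1$-approximation property for $\mathbb Q^\kappa_{\beta}$ (and $\mathbb Q^\kappa_{\beta+1}$) for $\beta<\alpha$; it says nothing about the quotient $\mathbb Q^\kappa_\alpha/G_\beta$. The approximation property for the quotient is the \emph{next} proposition, proved using the present one, so invoking it here would be circular. Relatedly, \cref{quotient-proper} plays no role in this proof; the paper never passes to the quotient. What the paper does instead is this: after finding $q\le p^M$ deciding $\dot f\restriction (M^*\cap\mu)$ as $g$ (note $g\notin M^*$, only $g\in V$), set $\gamma=\max(\dom(w_q)\cap M)$ and pick a $V$-generic $G_{\gamma+1}$ on $\mathbb Q^\kappa_{\gamma+1}$ containing $q\restriction(\gamma+1)$. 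The inductive hypothesis applies to $\mathbb Q^\kappa_{\gamma+1}$, so by \cref{approx does not add guessing function} the function $g$ is still not guessed in $M^*[G_{\gamma+1}]$. The assignment $x\mapsto(q_x,g_x)$ is chosen in $M^*[G_{\gamma+1}]$ (not in $M^*$) subject to $q_x\restriction(\gamma+1)\in G_{\gamma+1}$ and $(\mathcal M_{q_x},d_{q_x})\le(\mathcal M_q,d_q)\restriction M$; then \cref{Baumgartner} produces a cofinal $B$ with $g_x\nsubseteq g$, hence $q_x\perp q$.

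The point of the specific choice of $\gamma$ is exactly what eliminates your worry in the last paragraph about ``orchestrating the incompatibility argument \ldots\ simultaneously for all specializing commitments''. Since $\gamma=\max(\dom(w_q)\cap M)$ and $q_x\in M$, the only $w$-coordinates of $q$ that could conflict with $q_x$ lie in $\gamma+1$, and there $q_x\restriction(\gamma+1)$ and $q\restriction(\gamma+1)$ are both in $G_{\gamma+1}$, hence compatible. Above $\gamma$, the $w$-parts of $q_x$ and $q$ have disjoint domains (the former inside $M$, the latter outside), so the amalgamation is literally the one from the proof of \cref{proper-iteration}. There is no need to run the Chodounsk\'y--Zapletal tree argument at each coordinate; the single application of \cref{Baumgartner} to the $\dot f$-values suffices, and the side-condition/decoration compatibility is handled by \cref{SP}. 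Your proposed route through the quotient and \cref{quotient-proper} does not provide this amalgamation and, as written, appeals to an unavailable hypothesis.
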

\begin{proof}
We proceed by induction. Suppose $\alpha\in E$ and we have established that $\mathbb Q^\kappa_\alpha$
has the $\omega_1$-approximation property.  Recall that $\mathbb Q^\kappa_{\alpha+1}$ is either $\mathbb Q^\kappa_\alpha$
or is isomorphic to $\mathbb Q^\kappa_\alpha \ast \dot{\mathbb S}(\dot{T}_\alpha)$, for some $\dot{T}_\alpha$
which is a name for a tree of size and height $\omega_1$ without uncountable branches. 
Then by  \cref{spec-poset-approx}  $\dot{\mathbb S}(\dot{T}_\alpha)$
is forced to have the $\omega_1$-approximation property over $V[G_\alpha]$, where $G_\alpha$ is a generic over ${\mathbb Q^\kappa_\alpha}$.
By  \cref{transitivity of approx} we conclude that $\mathbb Q^\kappa_{\alpha+1}$ 
has the $\omega_1$-approximation property. 
Recall also that $\mathbb Q^\kappa_{{\rm min}(E)}$ is strongly proper  for $\mathscr C$, by \cref{SP},
and hence has the $\omega_1$-approximation property by \cref{SP implies approx}.

Suppose now that $\alpha$ is not the least element of $E$ and that the conclusion holds for every ordinal in  $E^+\cap \alpha$. 
Let $\dot f$ be a $\mathbb Q^\kappa_\alpha$-name for a function $\mu \to 2$ which is forced by a condition $p$ to be 
countably approximated in $V$. We may assume that  $\mu$ is a cardinal in $V$.
Suppose $\lambda^*>\mu,\lambda$ is a sufficiently large regular cardinal.
Pick a countable  $M^*\prec H_{\lambda^*}$ containing the  relevant objects. Thus $M=M^*\cap V_\lambda$ is a standard virtual model.  
Let $q\leq p^M$ be a condition which decides $\dot f\rest {M^*\cap \mu}$ and forces it to be equal to some function  $g:M^*\cap\mu\rightarrow 2$ 
which is in $V$. 
By \cref{proper-iteration2}  $q$ is $(M^*,\mathbb Q^\kappa_\alpha)$-generic.
By \cref{decision is guessed}, it suffices to show that $g$ is guessed in $M^*$. Assume towards a contradiction that $g$ is not guessed in $M^*$.
Let $\gamma={\rm max}({\rm dom}(w_q)\cap M)$, 
and pick some $V$-generic filter $G_{\gamma+1}$ on $\mathbb Q^\kappa_{\gamma+1}$
such that 	$(q\rest  \gamma,w_q(\gamma))\in G_{\gamma+1}$.
Working in $V[G_{\gamma+1}]$, we show that in  $M^*[G_{\gamma+1}]$ there is an assignment $x\mapsto (q_x,g_x)$ on $[\mu]^\omega\cap V$ 
with the following properties.
			
\begin{enumerate}
\item $q_x\in\mathbb Q^\kappa_\alpha$,
\item $(\mathcal M_{q_x},d_{q_x})\leq (\mathcal M_q,d_q)\rest M$,
\item $q_x\rest{\gamma+1}\in G_{\gamma+1}$,
\item $g_x: x\to 2$, $g_x \in V$,
\item $q_x\Vdash \dot{f}\rest x=\check{g}_x$.
\end{enumerate}
			
If $x\in [\mu]^\omega \cap M^*$ then $x\mapsto (q,g\rest x)$ satisfies all the above properties.
By  elementarity of $M^*[G_{\gamma+1}]$ in $H_{\lambda^*}[G_{\gamma+1}]$,  such an assignment exists for all $x \in [\mu]^\omega \cap V$.
Moreover, by elementarity again there is such an assignment in $M^*[G_{\gamma+1}]$.
Now, by the inductive assumption and \cref{approx does not add guessing function}, $g$ is not guessed in $M^*[G_{\gamma+1}]$.
Since $\mathbb Q^\kappa_\alpha$ is proper, $[\mu]^\omega \cap V$ is a cofinal in $[\mu]^\omega$.
By  \cref{Baumgartner} there is $B\in M^*[G_{\gamma+1}]$, a  cofinal subset of $[\mu]^\omega\cap V$, 
such that for every $x\in B\cap M^*[G_{\gamma+1}]$, $g_x\nsubseteq g$, and hence $q_x$ is incompatible with $q$.
On the other hand, conditions (1)-(3) above enable us to use \cref{amalgamation} to make sure that $q_x$ and $q$ are compatible. 
Thus, for every such $x\in B\cap M^*[G_{\gamma+1}]$, $q_x$ and $q$ are compatible, and hence we get a contradiction. 
\end{proof}
One can  use \cref{quotient-proper} and the proof of \cref{approx} to prove the following.
\begin{proposition}\label{quotient-approximation-5.35}
Suppose $\alpha, \beta \in E^+ \cup \{ \lambda \}$ and $\alpha < \beta$. 
Suppose that $G_\alpha$ is a $V$-generic filter over $\mathbb Q^\kappa_\alpha$. 
Then $\mathbb Q^\kappa_\beta/G_\alpha$ has the $\omega_1$-approximation property over $V[G_\alpha]$.
\end{proposition}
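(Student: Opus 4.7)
The plan is to mirror the proof of Proposition~\ref{approx}, replacing the use of Proposition~\ref{proper-iteration2} with Proposition~\ref{quotient-proper} and carrying out an induction on $\beta \in E^+\cup\{\lambda\}$ with $\beta>\alpha$, relative to the fixed ground model $V[G_\alpha]$.

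For the successor step, suppose $\beta = \gamma +1$ with $\gamma \in E$ and $\gamma \geq \alpha$. Then $\mathbb Q^\kappa_\beta/G_\alpha$ is either equal to $\mathbb Q^\kappa_\gamma/G_\alpha$ or factors as $(\mathbb Q^\kappa_\gamma/G_\alpha)\ast \dot{\mathbb S}(\dot T_\gamma)$, where $\dot T_\gamma$ is a $\mathbb Q^\kappa_\gamma$-name for a tree of height $\omega_1$ with no uncountable branch. In the nontrivial case, Proposition~\ref{spec-poset-approx} applied inside $V[G_\gamma]$ yields that $\dot{\mathbb S}(\dot T_\gamma)$ is forced to have the $\omega_1$-approximation property, and combining this with the inductive hypothesis for $\gamma$ via Theorem~\ref{transitivity of approx} produces the $\omega_1$-approximation property for the pair $(V[G_\alpha], V[G_{\gamma+1}])$. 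The base case $\beta = \alpha+1$ is this argument with $\gamma = \alpha$ and no inductive hypothesis needed.

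For the limit step, work inside $V[G_\alpha]$ and let $\dot f$ be a $(\mathbb Q^\kappa_\beta/G_\alpha)$-name for a function $\mu \to 2$ forced by some $p$ to be countably approximated over $V[G_\alpha]$. Fix a regular $\lambda^* > \mu,\lambda$, and using Corollary~\ref{quotient-stat-proper} pick a countable $M^*_0 \prec H_{\lambda^*}^V$ containing all relevant parameters with $M = M^*_0 \cap V_\lambda \in \mathscr C_{\rm st}[G_\alpha]$, and set $M^* = M^*_0[G_\alpha]$. Pass to $p^M$ via Proposition~\ref{topMiteration}, extend to some $q \leq p^M$ deciding $\dot f \rest (M^*\cap \mu)$ as a function $g \in V[G_\alpha]$. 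By Proposition~\ref{quotient-proper}, $q$ is $(M^*, \mathbb Q^\kappa_\beta/G_\alpha)$-generic, so by Lemma~\ref{decision is guessed} it suffices to show $g$ is guessed in $M^*$. Assume otherwise, and let $\gamma \in E^+$ with $\alpha \leq \gamma < \beta$ be chosen above $\max({\rm dom}(w_q)\cap M^*)$. Force a $V[G_\alpha]$-generic $G_\gamma$ for $\mathbb Q^\kappa_\gamma/G_\alpha$ containing $q\rest \gamma$. By the inductive hypothesis and Lemma~\ref{approx does not add guessing function}, $g$ remains unguessed in $M^*[G_\gamma]$.  Inside $M^*[G_\gamma]$, by elementarity in $H_{\lambda^*}[G_\gamma]$, choose an assignment $x \mapsto (q_x, g_x)$ on $[\mu]^\omega\cap V[G_\alpha]$ with $(\mathcal M_{q_x}, d_{q_x}) \leq (\mathcal M_q, d_q)\rest M$, $q_x \rest \gamma \in G_\gamma$, $g_x \in V[G_\alpha]$, and $q_x \Vdash \dot f \rest x = \check g_x$; the pair $(q, g\rest x)$ witnesses this for $x \in [\mu]^\omega \cap M^*$. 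Apply Lemma~\ref{Baumgartner} inside $M^*[G_\gamma]$ to find a cofinal $B \in M^*[G_\gamma]$ of $[\mu]^\omega \cap V[G_\alpha]$ such that $g_x \nsubseteq g$ for all $x \in B$, which forces $q_x$ and $q$ to be incompatible whenever $x \in B \cap M^*[G_\gamma]$.

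The key remaining step, and the main obstacle, is the amalgamation: transporting the argument from Proposition~\ref{proper-iteration} into the quotient so that, for $x \in B \cap M^*[G_\gamma]$, the conditions $q_x$ and $q$ are in fact compatible in $\mathbb Q^\kappa_\beta/G_\alpha$. The conditions agree below $\gamma$ through $G_\gamma$; above $\gamma$ the decoration part amalgamates via Proposition~\ref{SP} (since $M\rest \eta \in \mathcal M_{q_x} \cap \mathcal M_q$ is a common restriction), and the $w$-parts are glued by taking $w_{q_x}$ below $\gamma$ and $w_q$ above, exactly as in the proof of Proposition~\ref{proper-iteration}. The resulting triple is a condition in $\mathbb Q^\kappa_\beta/G_\alpha$ extending both $q$ and $q_x$, contradicting the choice of $B$. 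This completes the induction, so $g$ must be guessed in $M^*$, and hence $\mathbb Q^\kappa_\beta/G_\alpha$ has the $\omega_1$-approximation property over $V[G_\alpha]$.
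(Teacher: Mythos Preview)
Your proposal is correct and follows exactly the approach the paper indicates: the paper provides no detailed proof here, only remarking that one uses Proposition~\ref{quotient-proper} together with the argument of Proposition~\ref{approx}, and you have carried this out faithfully, replacing absolute properness by quotient properness over $V[G_\alpha]$ and using the stationary family $\mathscr C_{\rm st}[G_\alpha]$ from Corollary~\ref{quotient-stat-proper}. The minor imprecisions (the base case presupposes $\alpha\in E$, and the list of requirements on $q_x$ should, as in the proof of Proposition~\ref{approx}, record that $q_x\in\mathbb Q^\kappa_\beta$) are cosmetic and do not affect the argument.
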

\begin{proof}[\nopunct]
\end{proof}
	
The following corollary is immediate.
\begin{corollary}\label{level by level approx}
Suppose $\alpha, \beta \in E^+ \cup \{ \lambda \}$  and $\alpha <\beta$. Let $G_\beta$ be a $V$-generic filter over $\mathbb Q^\kappa_\beta$,
and let $G_\alpha = G_\beta \cap \mathbb Q^\kappa_\alpha$. Then the pairs $(V,V[G_\alpha])$ and  $(V[G_\alpha],V[G_\beta ])$
have the $\omega_1$-approximation property.
\end{corollary}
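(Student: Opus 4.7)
The plan is to observe that each of the two claims unpacks directly into one of the two previously established approximation‐property results, with no further forcing‐theoretic work required. Specifically, the first pair is handled by \cref{approx} applied to the lower stage, while the second pair is handled by the quotient version stated just above (the unnamed proposition immediately preceding the corollary).

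First I would note that $G_\alpha$ is indeed a $V$-generic filter over $\mathbb Q^\kappa_\alpha$. This uses that $\mathbb Q^\kappa_\alpha$ is a complete suborder of $\mathbb Q^\kappa_\beta$, which is the corollary following \cref{subcomplete}. Hence $V[G_\alpha]$ really is an intermediate model between $V$ and $V[G_\beta]$ obtained by forcing with $\mathbb Q^\kappa_\alpha$.

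Next, for the pair $(V, V[G_\alpha])$: by \cref{approx} the forcing $\mathbb Q^\kappa_\alpha$ has the $\omega_1$-approximation property, which by the definition of a forcing having the $\omega_1$-approximation property means precisely that $(V, V[G_\alpha])$ has the $\omega_1$-approximation property. For the pair $(V[G_\alpha], V[G_\beta])$: the preceding proposition says $\mathbb Q^\kappa_\beta/G_\alpha$ has the $\omega_1$-approximation property over $V[G_\alpha]$, and since $V[G_\beta]$ is a generic extension of $V[G_\alpha]$ by $\mathbb Q^\kappa_\beta/G_\alpha$, this immediately translates into the statement that $(V[G_\alpha], V[G_\beta])$ has the $\omega_1$-approximation property.

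There is no real obstacle here; the only subtlety is making sure one is using the right reading of ``$\omega_1$-approximation property'' (as a property of a forcing versus a property of a pair of models), but that is settled by the definitions given in the preliminaries. Both assertions are therefore reformulations of results already proved, so the proof is a single two‑line invocation.
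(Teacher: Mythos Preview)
Your proposal is correct and matches the paper's own approach: the paper treats this corollary as immediate from \cref{approx} and the unnamed proposition just before it, exactly as you do.
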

	
\begin{proof}[\nopunct]			\end{proof}

\subsection*{Quotients by Magidor models}

The first lemma of this subsection states that 
$\mathbb Q^\kappa_\lambda$ is strongly proper for  $\mathscr U$ in a canonical way.
	
\begin{definition}
Let $p\in\mathbb Q^\kappa_\alpha$. Assume  $N\in \mathcal L(\mathcal M_p)$ is a Magidor model. 
We let 
\[
p\rest N=(\mathcal M_{p\rest N},d_{p\rest N},w_p\rest N).
\]
\end{definition}

\begin{lemma}\label{two projections commute} 
Let  $p\in\mathbb Q^\kappa_\lambda$, and let   $N\in\mathcal L(\mathcal{M}_p)$ be a Magidor model. Assume that $\alpha\leq \eta(N)$ is in $E$.
Then $p\rest \alpha\rest N=(p\rest N)\rest  \alpha$.
\end{lemma}
\begin{proof}
It is enough to show that if $M\in\mathcal M_p$, then $(M\rest N)\rest  \alpha= M\rest  \alpha\rest N$.
This is of course clear from the definition of $M\rest N$.
\end{proof}

\begin{lemma}\label{Canonical Uproper-iteration}
    Let $p\in\mathbb Q^\kappa_\alpha$, and assume that  $N\in \mathcal L(\mathcal M_p)$ is a Magidor model.   Then
    \begin{enumerate}
        \item $p\rest N\in \mathbb Q^\kappa_\alpha\cap N$.
        \item Every condition $q\in N$  extending $ p\rest N$  is compatible with $p$.
    \end{enumerate} 
\end{lemma}

\begin{proof}
We prove both items simultaneously by induction on $\alpha$.
 Note that both are true for the minimum of $E$. Thus let us assume that $\alpha>\min(E)$.
 Note that  $p\rest N$ belongs to $N$. So all we have  to show is that $p\rest N$ is a condition in $\mathbb Q^\kappa_\alpha$ that satisfies the second item.

One needs first to show that for every $\gamma\in\dom(p)\cap N$, $(p\rest N)\rest\gamma$, which is a condition by the inductive hypothesis and \cref{two projections commute}, forces in  ${\mathbb Q^\kappa_\gamma}$ that $``\check{w}_p(\gamma) \in \dot{\mathbb S}(\dot{T}_\gamma)"$. 
Assume towards a contradiction that this is not the case, so there is $q\in {\mathbb Q^\kappa_\gamma}$ with $q\leq (p\rest N)\rest\gamma$ which forces 
$\check{w}_p(\gamma)$ is not in $\dot{\mathbb S}(\dot{T}_\gamma)"$. Since  both $p\rest N\rest \gamma$ and $w_p(\gamma)$ belong to $N$, we can find such $q$ in $N$ by elementarity. Therefore, $q$ is not compatible with $p\rest\gamma$. This contradicts the second item above applied  to $p\rest\gamma,N\rest\gamma$, and $q$. Hence the first item follows.

To see the second item, we will define a common extension of $p,q$, say $r$ by letting $(\mathcal M_r,d_r)=(\mathcal M_p,d_p)\land(\mathcal M_q,d_q)$,  $\dom(w_r)={\rm dom}(w_p)\cup{\rm dom}(w_q)$, and that 
\begin{equation*}
w_r(\gamma)=\begin{cases} 
w_q(\gamma) & \mbox{if } \gamma\in N, \\
w_p(\gamma) & \mbox{if } \gamma\notin N.
\end{cases}
\end{equation*}

It follows from  \cref{rest-magidor-prop}  that $r$ is a common extension of $p$ and $q$.
\end{proof}

\begin{remark}
As before, we let the above $r$  be the meet of $p$ and $q$, which we denote by $p\land q$.
\end{remark}
Suppose that $N \in\mathscr U$ and $\alpha\in E$. We set
\[	\mathbb Q^\kappa_\alpha\rest N\coloneqq\{p\in\mathbb Q^\kappa_\alpha: N\rest  \alpha\in\mathcal M_p\}~~~\mbox{ and }~~~\mathbb Q^\kappa_{\alpha,N}\coloneqq\mathbb Q^\kappa_\alpha\cap N.
\]

By \cref{Canonical Uproper-iteration}, the condition $\mathbf 1_{N}\coloneqq(\{N\rest  \alpha\},\varnothing,\varnothing)$ is $(N,\mathbb Q^\kappa_\alpha)$-strongly generic, and therefore the mapping
\begin{equation*}
\begin{split}
\mathbb Q^\kappa_{\alpha,N}&\longrightarrow \mathbb Q^\kappa_\alpha\rest N\\
p&\longmapsto p^{N\rest \alpha}
\end{split}
\end{equation*}
is a complete embedding. 
Moreover, if $p$ is a condition in $\mathbb Q^\kappa_\alpha\rest N$, then $p\rest N\in \mathbb Q^\kappa_{\alpha,N}$ is such that if $q\in \mathbb Q^\kappa_{\alpha,N}$ extends $p\rest N$, then $p$ and $q$ are compatible, and indeed the meet $p\land q$ exists.

For a $V$-generic filter $G_{\alpha,N}\subseteq\mathbb Q^\kappa_{\alpha}\cap N$ we can form the following quotient forcing
\[
\mathbb R^N_\alpha\coloneqq\mathbb Q^\kappa_\alpha\rest N/G_{\alpha,N}.
\]
\begin{definition}
Let $G_{\alpha,N}$ be a $V$-generic filter over $\mathbb Q^\kappa_{\alpha}\cap N$. 
Let $\mathscr C_{\rm st}[G_{\alpha,N}]$ denote the set of all models $M\in \mathscr C_{\rm st}$ such 
that $\alpha, N\in M$ and $(N\land M)\rest{\alpha}\in \M_{G_{\alpha,N}}$.
\end{definition}
	
Note that $\mathscr C_{\rm st}[G_{\alpha,N}]$ is stationary in $\mathcal P_{\omega_1}(V_\lambda)$ in the model $V[G_{\alpha,N}]$, but we do not need this and therefore avoid a proof.

\begin{lemma}[Factorization Lemma]\label{project to quotient}
Suppose $N\in\mathscr U$, and that $\alpha\leq\beta\leq\eta(N)$ are in $E$. Let $G_{\alpha,N}\subseteq \mathbb Q^\kappa_\alpha \cap N$ be a $V$-generic filter. The mapping 
\begin{equation*}
\begin{split}
\rho:\mathbb Q^\kappa_\beta\rest N/G_{\alpha,N}&\longrightarrow \mathbb R^N_\alpha \times (\mathbb Q^\kappa_\beta\cap N)/G_{\alpha,N}\\
p&\longmapsto (p\rest  \alpha, p\rest N)
\end{split}
\end{equation*}
is a projection in $V[G_{\alpha,N}]$.
\end{lemma}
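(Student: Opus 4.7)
My approach is to verify the three defining properties of a projection: order preservation, preservation of the top element, and the lifting property. Of these, the first two are essentially bookkeeping, while the lifting property requires the bulk of the work.

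First I would check that $\rho$ is well-defined. Given $p\in \mathbb Q^\kappa_\beta\restriction N/G_{\alpha,N}$, the restriction $p\rest \alpha$ lies in $\mathbb Q^\kappa_\alpha\restriction N$ because $N\rest \alpha \in \mathcal M_{p\rest \alpha}$, and by Lemma \ref{two projections commute} we have $(p\rest \alpha)\restriction N = (p\restriction N)\rest \alpha$, which belongs to $G_{\alpha,N}$ by assumption on $p$; hence $p\rest \alpha \in \mathbb R^N_\alpha$. A symmetric computation shows $p\restriction N \in (\mathbb Q^\kappa_\beta\cap N)/G_{\alpha,N}$. Order preservation is then immediate, since both of the operations $p\mapsto p\rest \alpha$ and $p\mapsto p\restriction N$ are themselves order-preserving on $\mathbb Q^\kappa_\beta$.

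For the lifting property, suppose $p$ is given and $(q_1,q_2)\leq \rho(p)$ in the product. The crucial observation is that $q_1\restriction N$ and $q_2\rest \alpha$ both lie in $G_{\alpha,N}$ and both refine $(p\rest \alpha)\restriction N = (p\restriction N)\rest \alpha$; since $G_{\alpha,N}$ is a filter, there exists $s \in G_{\alpha,N}$ with $s \leq q_1\restriction N$ and $s \leq q_2\rest \alpha$ in $\mathbb Q^\kappa_\alpha\cap N$. Using $s$, I would assemble $p^*$ by prescribing its three components. Take $\mathcal M_{p^*}$ to be the closure of $\mathcal M_p\cup \mathcal M_{q_1}\cup \mathcal M_{q_2}$ under meets, and $d_{p^*}$ the analogous combination of the decorations; that this yields a valid pure side condition is a straightforward adaptation of Lemmas \ref{CS} and \ref{Canonical Uproper-iteration}, using $q_1\leq p\rest \alpha$ in $\mathbb Q^\kappa_\alpha$ and $q_2 \leq p\restriction N$ in $\mathbb Q^\kappa_\beta$ to handle compatibility. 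For the specializing component, define $w_{p^*}$ on $\mathrm{dom}(w_p)\cup \mathrm{dom}(w_{q_1})\cup \mathrm{dom}(w_{q_2})$ by setting $w_{p^*}(\gamma) = w_{q_1}(\gamma)\cup w_{q_2}(\gamma)$ when $\gamma \in \mathrm{dom}(w_{q_1})\cap \mathrm{dom}(w_{q_2})$, and using whichever single component is defined otherwise. Once $p^*$ is produced, one verifies $p^*\leq p,q_1,q_2$ directly, $(p^*\rest \alpha)\restriction N \leq s \in G_{\alpha,N}$ guarantees membership in $\mathbb Q^\kappa_\beta\restriction N/G_{\alpha,N}$, and $\rho(p^*)\leq (q_1,q_2)$ is immediate.

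The main obstacle is verifying, on the overlap $\gamma \in \mathrm{dom}(w_{q_1})\cap \mathrm{dom}(w_{q_2})\subseteq \alpha\cap N$, that $w_{q_1}(\gamma)\cup w_{q_2}(\gamma)$ is forced to be a valid specializing function for $\dot T_\gamma$. This is precisely where the element $s$ plays its role: $s\rest \gamma$ forces that $w_s(\gamma)$ extends both $w_{q_1}(\gamma)$ and $w_{q_2}(\gamma)$ in $\dot{\mathbb S}(\dot T_\gamma)$, so these two finite partial specializing functions are compatible, and hence their union is itself such a function. All remaining coherence checks on the model and decoration components reduce, via the definitions of $p\rest \alpha$ and $p\restriction N$ and the commutativity of Lemma \ref{two projections commute}, to the routine bookkeeping already carried out in Propositions \ref{subcomplete} and \ref{topMiteration}.
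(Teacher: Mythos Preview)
Your overall strategy is right, but there is a genuine gap in the construction of $p^*$: you use the common refinement $s\in G_{\alpha,N}$ only as an external witness for compatibility of the specializing components, without actually folding $s$ into $p^*$. This breaks the verification that $p^*$ is a condition.

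Concretely, fix $\gamma\in\mathrm{dom}(w_{q_1})\cap\mathrm{dom}(w_{q_2})$. You correctly observe that $s\rest\gamma$ forces $w_{q_1}(\gamma)\cup w_{q_2}(\gamma)\in\dot{\mathbb S}(\dot T_\gamma)$, since this union is contained in $w_s(\gamma)$. But what you need is that $p^*\rest\gamma$ forces this, and your $\mathcal M_{p^*}$ is built only from $\mathcal M_p\cup\mathcal M_{q_1}\cup\mathcal M_{q_2}$, so in general $p^*\rest\gamma\not\leq s\rest\gamma$. Nothing prevents an extension of $p^*\rest\gamma$ from forcing that some $\xi\in\mathrm{dom}(w_{q_1}(\gamma))$ and $\xi'\in\mathrm{dom}(w_{q_2}(\gamma))$ with the same color are comparable in $\dot T_\gamma$; such an extension would simply be incompatible with $s\rest\gamma$, which is allowed. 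Relatedly, your claim that $(p^*\rest\alpha)\restriction N\leq s$ is not justified: $s$ may contain models absent from both $q_1\restriction N$ and $q_2\rest\alpha$, and you have not placed those into $p^*$.

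The paper handles this by a two-step amalgamation: first absorb the witness $\bar q\in G_{\alpha,N}$ into an intermediate condition $q=(\bar q\land r)\land s$ using the canonical meets of \cref{subcomplete} and \cref{Canonical Uproper-iteration}, so that $w_q$ already carries the compatibility information and one has exactly $q\rest\alpha\restriction N=\bar q\in G_{\alpha,N}$; only then is $q$ merged with $p$ to form $t$, with $w_t(\gamma)=w_q(\gamma)$ on the relevant coordinates. If you want to keep your one-step construction, you must include $\mathcal M_s$ (and $d_s$, $w_s$) in the amalgam and use $w_s(\gamma)$ rather than $w_{q_1}(\gamma)\cup w_{q_2}(\gamma)$ on the overlap; at that point the argument essentially becomes the paper's.
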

\begin{proof}
It is easy to observe that $\rho$  is a well-defined, order-preserving mapping that respects the maximal conditions.
Assume that the arbitrary  elements
\[p\in\mathbb Q^\kappa_\beta\rest N/G_{\alpha,N} \mbox{ and } (r,s)\in\mathbb Q^\kappa_\alpha\rest{N}/G_{\alpha, N}  \times (\mathbb Q^\kappa_\beta\cap N)/G_{\alpha,N} \mbox{ with }\]
\[
(r,s)\leq (p\rest  \alpha,p\rest N)
\]
are given. Our goal is to find a condition $q\in\mathbb Q^\kappa_\beta\rest N/G_{\alpha,N}$ with $q\leq p$ such that $\rho(q)\leq (r,s)$.

Since  $r\rest N,s\rest  \alpha\in G_{\alpha,N}$, we can fix a common extension $\bar q\in G_{\alpha,N}$ of them. Now by applying \cref{Canonical Uproper-iteration} to $\bar{q}, N,r$, we have 
\[
\bar q\land r\leq \bar q\leq s\rest\alpha,
\]
and by applying \cref{subcomplete} to $\bar{q},s,\alpha$, we have
\[
\bar q\land s\leq \bar q\leq r\rest N.
\]
By reverse applications of \cref{Canonical Uproper-iteration,subcomplete} to the above inequalities, the meets $(\bar q\land r)\land s$ and $(\bar q\land s)\land r$ exist. A simple calculation shows that these conditions are equal, so let us call them
$q^*$. The canonicity of our projections guarantees that 
\[
q^*\rest  \alpha=(\bar q\land r)\leq r~~~\mbox{ and }~~~q^*\rest N=(\bar q\land s)\leq s. 
\]
Moreover,
$$q^*\rest  \alpha\rest N= (q^*\rest N)\rest  \alpha=\bar q\in G_{\alpha,N}.$$	
Consequently,
\[
q^*\in\mathbb  Q^\kappa_\beta/G_{\alpha,N}~~~\mbox{ and }~~~(q^*\rest  \alpha,q^*\rest N)\leq (r,s).
\]
It suffices to show that $p$ and $q^*$ are compatible, since the mappings $\bullet\mapsto \bullet\rest\alpha$ and $\bullet\mapsto\bullet\rest N$ are order-preserving.
We define a common extension  of them, say $q$. Let $\mathcal M_q=\mathcal M_p\cup\mathcal M_{q^*}$. For every $\delta\in E$,
\begin{equation*}
\mathcal M_q^\delta=\begin{cases} 
\mathcal M_{q^*}^\delta & \mbox{if } \delta\leq\alpha,~~~~~\mbox{ (since $\mathcal M_p^\delta\subseteq\mathcal M_r^\delta\subseteq\mathcal M_{q^*}^\delta$)} \\
& \\
\mathcal M_p^\delta & \mbox{if } \delta>\alpha.~~~~~~\mbox{ (since $\mathcal M_{q^*}^\delta\subseteq\mathcal M_s^\delta\subseteq\mathcal M_p^\delta$)} 
\end{cases}
\end{equation*}
In either case, $\mathcal M_q^\delta$ is  an $\delta$-chain. Thus $\mathcal M_q$ is a condition in $\mathbb M^\kappa_\alpha$.
We now define $d_q$  on ${\rm dom}(d_p)\cup {\rm dom}(d_{q^*})$ by letting
		
\begin{equation*}
d_q(P) = \begin{cases} 
d_p(P) &\mbox{if } \eta(P)>\alpha \text{ and } P\notin N, \\
d_{q^*}(P) & \mbox{otherwise}. 
\end{cases}
\end{equation*}
The function is well-defined as $\rho(q^*)\leq \rho(p)$.		
A proof similar  to \cite[Lemma 4.14]{MV} would show that every model in ${\rm dom}(d_q)$ is $\mathcal M_q$-free.
We sketch the proof. Thus we 
 assume  that $P\in\mathcal L(\mathcal M_q)$, $Q\in\mathcal M_q$, and that $P\in_{\eta(P)}$ Q. We have to show that $Q$ is strongly active at $\eta(P)$.
\begin{itemize}
    \item \textbf{Case 1.}  $\eta(N)>\alpha$ and $P\notin N$:
    Then  $P$ does not belong to $\mathcal L(\mathcal M_{q^*})$, hence $Q\notin\mathcal M_{q^*}$, in which case, since  $p$ is a condition, $Q$ must be strongly active at $\eta(P)$.
     \item \textbf{Case 2.} $\eta(N)>\alpha$ and  $P\in N$:
    Then  $P$ is in $\mathcal L(\mathcal M_{q^*})$. We may assume that $Q\notin\mathcal M_{q^*}$. In particular, $Q\in\mathcal M_p$. Now both $N$ and $Q$ are active at $\eta(P)$. Therefore, we must have $P\in N\in_{\eta(P)} Q$.
    We can also assume that $Q$ is countable, as otherwise, it is strongly active at $\eta(P)$.
    Now, we have $P\in_{\eta(P)}(N\land Q)\rest N$, where the latter model belongs to $\mathcal M_{q^*}$. Thus $(N\land Q)\rest N$ is strongly active at $\eta(P)$, and hence $Q$.

    \item \textbf{Case 3.}  $\eta(P)\leq\alpha$: In this case $P$ is in $\mathcal L(\mathcal M_{q^*\rest  \alpha})$, then $Q\rest  \alpha\in \mathcal M_{q^*\rest\alpha}$.
Now $Q\rest  \alpha$, and consequently $Q$ is strongly active at $\eta(P)$.	
\end{itemize}

Using the above argument, it is easy to show that the
 decorative function $d_q$ fulfils 
$(\divideontimes)$ in \cref{PF}.

It remains to define $w_q$ on ${\rm dom}(w_p)\cup{\rm dom}(w_{q^*})$. For each $\gamma\in {\rm dom}(w_q)$, we let
\begin{equation*}
w_q(\gamma)=\left\{ \begin{array}{cl}
w_p(\gamma) & \textrm{if }\gamma>\alpha \text{ and } \gamma\notin N, \\
w_{q^*}(\gamma) & \textrm{otherwise}.
\end{array}\right.
\end{equation*}
		
We observe that $w_q$ is well-defined thanks to the definition of $q^*$. 
Thus $q$ is a condition in $\mathbb Q^\kappa_\beta\rest N$, which is easily shown to be a common extension of  $p$ and $q$.
By an easy calculation, we have  \[
q\rest \alpha\rest N=\bar q\in G_{\alpha,N}, 
\]
and therefore $q\in\mathbb Q^\kappa_\beta\rest N/G_{\alpha,N}$, as required!
\end{proof}

\begin{lemma}\label{subcomplete R}
Suppose  $N\in\mathscr U$ and that $\alpha\leq\beta\leq \eta(N)$ are in $E$. Let $G_{\beta,N}$ be a $V$-generic filter over $\mathbb Q^\kappa_{\beta,N}$. 
Then in $V[G_{\beta,N}]$, $\mathbb R^N_\alpha$ is a complete suborder of $\mathbb R^N_\beta$.
\end{lemma}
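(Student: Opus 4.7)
The plan is to exhibit an explicit complete embedding $\iota\colon\mathbb R^N_\alpha\to\mathbb R^N_\beta$ via the natural map $p\mapsto p^{N\restriction\beta}$ that appends the Magidor model $N\restriction\beta$ to $p$, as in \cref{M on top 1,topMiteration}. Before verifying the embedding properties I will set up the generics: since $\mathbb Q^\kappa_\alpha$ is a complete suborder of $\mathbb Q^\kappa_\beta$ and $N$ is elementary, $\mathbb Q^\kappa_{\alpha,N}$ is completely embedded in $\mathbb Q^\kappa_{\beta,N}$, so $G_{\alpha,N}:=G_{\beta,N}\cap\mathbb Q^\kappa_{\alpha,N}$ is $V$-generic over $\mathbb Q^\kappa_{\alpha,N}$ and both quotients live in $V[G_{\beta,N}]$.

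I first check $\iota(p)\in\mathbb R^N_\beta$. A direct computation using \cref{two projections commute} gives $\iota(p)\restriction\alpha=p$ and $\iota(p)\restriction N=p\restriction N$, and the latter lies in $G_{\alpha,N}\subseteq G_{\beta,N}$. Given any $s\in G_{\beta,N}$, the filter provides $u\in G_{\beta,N}$ extending $s$ and $p\restriction N$; then \cref{Canonical Uproper-iteration} yields $\iota(p)\land u\leq s,\iota(p)$, so $\iota(p)$ is compatible with every element of $G_{\beta,N}$. Order preservation is transparent from the definition of $\iota$, and if $r\leq\iota(p),\iota(q)$ in $\mathbb R^N_\beta$ then $r\restriction\alpha\in\mathbb R^N_\alpha$ is a common extension of $p$ and $q$, establishing incompatibility preservation.

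The crucial point is maximal antichain preservation, for which the obvious candidate reduction is $r\mapsto r\restriction\alpha$. Given $p\in\mathbb R^N_\alpha$ with $p\leq r\restriction\alpha$, \cref{subcomplete} produces the meet $p\land r\in\mathbb Q^\kappa_\beta$ extending both; since $N\restriction\beta\in\mathcal M_r\subseteq\mathcal M_{p\land r}$, this meet also extends $\iota(p)$, so gives a common extension of $r$ and $\iota(p)$ in $\mathbb Q^\kappa_\beta\restriction N$. What remains is to verify $p\land r\in\mathbb R^N_\beta$, which reduces to $(p\land r)\restriction N\in G_{\beta,N}$: as $\mathcal M_{p\land r}=\mathcal M_p\cup\mathcal M_r$, the restriction $(p\land r)\restriction N$ is the natural amalgamation of $p\restriction N\in G_{\alpha,N}$ and $r\restriction N\in G_{\beta,N}$, both lying inside the filter $G_{\beta,N}$, hence itself in $G_{\beta,N}$.

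I expect the main obstacle to lie in this last verification: one must carefully track how the Magidor restriction $\restriction N$ distributes over the meet operation $\land$ from \cref{subcomplete}, in particular over the closure-under-meets implicit in $\mathcal M_{p\land r}$, and then use the strong genericity of $N\restriction\beta$ via \cref{Canonical Uproper-iteration} to lift compatibility from $\mathbb Q^\kappa_{\beta,N}$ back up to $\mathbb Q^\kappa_\beta\restriction N$. As a cleaner alternative, the Factorization Lemma~\ref{project to quotient} allows one to decompose $\mathbb Q^\kappa_\beta\restriction N/G_{\alpha,N}$ as a projection onto $\mathbb R^N_\alpha\times\mathbb Q^\kappa_{\beta,N}/G_{\alpha,N}$; further quotienting the second factor by $G_{\beta,N}/G_{\alpha,N}$ then isolates $\mathbb R^N_\alpha$ as a complete suborder of the resulting forcing, which is exactly $\mathbb R^N_\beta$.
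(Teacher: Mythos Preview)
Your proposal is correct and reaches the same destination as the paper, but with considerably more scaffolding. The paper's proof is three lines: given $p\in\mathbb R^N_\beta$ and $q\in\mathbb Q^\kappa_\alpha\restriction N$ with $q\leq p\restriction\alpha$, form $r=p\land q$ via \cref{subcomplete} and observe that $r\restriction N = p\restriction N \land q\restriction N\in G_{\beta,N}$, so $r\in\mathbb R^N_\beta$. That single computation---that the Magidor restriction $\restriction N$ distributes over the $\land$ of \cref{subcomplete}---is exactly the ``main obstacle'' you flag in your last paragraph, and it is in fact immediate from the explicit description $\mathcal M_{p\land q}=\mathcal M_p\cup\mathcal M_q$ and the corresponding formulas for $d$ and $w$; no appeal to \cref{Canonical Uproper-iteration} or strong genericity is needed here. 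Your explicit embedding $\iota(p)=p^{N\restriction\beta}$ and the verification of all four complete-embedding properties are fine but unnecessary: the paper treats $\mathbb R^N_\alpha$ as a suborder via the restriction map $p\mapsto p\restriction\alpha$ and checks only the one nontrivial condition. Your alternative route through the Factorization Lemma would also work but is overkill for this statement.
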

\begin{proof}
It is clear that $\mathbb R^N_\alpha\subseteq\mathbb R^N_\beta$, and furthermore, if $p\in\mathbb R^N_\beta$, then $p\rest\alpha\in\mathbb R^N_\alpha$. 
Now fix $p\in\mathbb R^N_\beta$ and suppose that $q\in\mathbb R^N_\alpha$ extends $p\rest \alpha$. Let $r=p\land q$ as obtained in $\mathbb Q^\kappa_\beta$.
Therefore, we have $r\in\mathbb Q^\kappa_\beta\rest N$. An easy calculation shows that  \[
r{\rest N}=p\rest N\land q\rest N\in G_{\beta,N},
\]
where we observe that $p\rest N\leq (q\rest N)\rest\alpha$, and hence the meet $p\rest N\land q\rest N$ exists by \cref{subcomplete}.
\end{proof}

\begin{lemma}\label{M on top in Q by Magidor}
Assume  $N$ is a Magidor model, $M\in\mathscr C_{\rm st}[G_{\alpha,N}]$, and that $p\in\mathbb R^N_\alpha\cap M$. Then $p^{M\rest  \alpha}$ is an
$(M[G_{\alpha,N}],\mathbb R^N_\alpha)$-generic condition.
\end{lemma}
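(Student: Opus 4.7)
The plan is to adapt the proof of Proposition~\ref{proper-iteration} to the quotient forcing $\mathbb R^N_\alpha$, exploiting the hypothesis $M\in\mathscr C_{\rm st}[G_{\alpha,N}]$, which gives $(N\land M)\rest \alpha\in\mathcal M_{G_{\alpha,N}}$.

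First I would verify that $p^{M\rest \alpha}\in\mathbb R^N_\alpha$. By Proposition~\ref{topMiteration}, $p^{M\rest \alpha}$ is a condition of $\mathbb Q^\kappa_\alpha$ extending $p$ with $M\rest \alpha\in\mathcal M_{p^{M\rest \alpha}}$. Closing $\mathcal M_p\cup\{M\rest \alpha\}$ under meets adjoins $(M\rest \alpha)\land(N\rest \alpha)=(N\land M)\rest \alpha$, which lies in $\mathcal M_{G_{\alpha,N}}$ by assumption. Combined with $p\rest N\in G_{\alpha,N}$, this yields $p^{M\rest \alpha}\rest N\in G_{\alpha,N}$, so $p^{M\rest \alpha}\in\mathbb R^N_\alpha$.

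For genericity, fix a dense open $D\in M[G_{\alpha,N}]$ of $\mathbb R^N_\alpha$ and $q\leq p^{M\rest \alpha}$, and seek $r\in D\cap M[G_{\alpha,N}]$ compatible with $q$. Extend $q$ to some $q'\in D$, let $\dot D\in M$ name $D$, and set $\gamma=\max({\rm dom}(w_{q'})\cap M)$. Pick a $V$-generic $G_{\gamma+1}$ on $\mathbb Q^\kappa_{\gamma+1}$ containing $q'\rest (\gamma+1)$ and compatible with $G_{\alpha,N}$. Inside $M[G_{\alpha,N}][G_{\gamma+1}]$, by elementarity in the appropriate hull (as witnessed by $q'$ itself), find $r\in D$ with $(\mathcal M_r,d_r)\leq(\mathcal M_{q'},d_{q'})\rest M$, $M\cap{\rm dom}(w_{q'})\subseteq{\rm dom}(w_r)$, and $r\rest (\gamma+1)\in G_{\gamma+1}$. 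By the inductive approximation property at level $\gamma+1$ in the quotient setting (Corollary~\ref{level by level approx}), $M[G_{\alpha,N}][G_{\gamma+1}]\cap V[G_{\alpha,N}]=M[G_{\alpha,N}]$, so such $r$ already lies in $M[G_{\alpha,N}]$. Amalgamate $q'$ and $r$ via Proposition~\ref{SP} following the template of Proposition~\ref{proper-iteration}: the amalgam $s$ lies in $\mathbb Q^\kappa_\alpha\rest N$, and since $q'\rest N, r\rest N\in G_{\alpha,N}$, also $s\rest N\in G_{\alpha,N}$, so $s\in\mathbb R^N_\alpha$ is a common extension of $q$ and $r$.

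The main obstacle will be choosing the intermediate generic $G_{\gamma+1}$ compatibly with $G_{\alpha,N}$ and verifying that the elementarity-plus-approximation argument survives in this mixed setting; the amalgamation step itself is then bookkeeping on top of the already-established template from Proposition~\ref{proper-iteration}.
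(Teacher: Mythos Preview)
Your first paragraph matches the paper: both verify $p^{M\rest\alpha}\in\mathbb R^N_\alpha$ via the identity $(p^{M\rest\alpha})\rest N=(p\rest N)^{(N\land M)\rest\alpha}$ (which the paper attributes to the proof of Lemma~4.7 of \cite{MV}), together with $p\rest N\in G_{\alpha,N}$ and $(N\land M)\rest\alpha\in\mathcal M_{G_{\alpha,N}}$.

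For the genericity part the paper takes a much shorter route than you propose. It does \emph{not} rerun the induction of Proposition~\ref{proper-iteration} inside the quotient. Instead it simply notes that $p^{M\rest\alpha}$ is already $(M,\mathbb Q^\kappa_\alpha\rest N)$-generic in $V$ (this is an instance of Proposition~\ref{proper-iteration}), and then invokes the standard factoring principle for quotients: since $\mathbb Q^\kappa_\alpha\cap N$ embeds completely into $\mathbb Q^\kappa_\alpha\rest N$ with projection $q\mapsto q\rest N$, the $(M,\mathbb Q^\kappa_\alpha\rest N)$-genericity of $p^{M\rest\alpha}$ yields that $(p^{M\rest\alpha})\rest N$ is $(M,\mathbb Q^\kappa_\alpha\cap N)$-generic and forces $p^{M\rest\alpha}$ to be $(M[\dot G_{\alpha,N}],\mathbb R^N_\alpha)$-generic. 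As $(p^{M\rest\alpha})\rest N\in G_{\alpha,N}$, the conclusion follows.

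Your direct approach runs into exactly the obstacle you flag at the end, and it is genuine. The intermediate generic $G_{\gamma+1}$ lives on $\mathbb Q^\kappa_{\gamma+1}$, while $G_{\alpha,N}$ lives on $\mathbb Q^\kappa_\alpha\cap N$; making them ``compatible'' and then concluding $M[G_{\alpha,N}][G_{\gamma+1}]\cap V[G_{\alpha,N}]=M[G_{\alpha,N}]$ is not what Corollary~\ref{level by level approx} gives you (that corollary treats only pairs of the form $(V[G_\alpha],V[G_\beta])$). To make this step precise you would need the Factorization Lemma~\ref{project to quotient} together with, in effect, an inductive instance of the present lemma at lower levels --- so you are either reasoning circularly or committing to a much longer simultaneous induction. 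The paper sidesteps all of this with a single appeal to the standard quotient-factoring fact.
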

\begin{proof}
Let us first show that $p^{M\rest\alpha}$ is an $\mathbb R^N_\alpha$-condition.
Note that by the proof of \cite[Lemma 5.7 ]{MV}, we have
\[
(\mathcal M_p,d_p)^{M\rest\alpha}\rest{N}=\big((\mathcal M_p,d_p)\rest{N}\big)^{N\land M\rest\alpha}.
\] 
On the other hand,
$N\land M\rest\alpha$ is in $\mathcal M_{G_{\alpha,N}}$, which in turn implies that
\[
\big((\mathcal M_{p\rest{N}},
d_{p\rest{N}})^{N\land M\rest\alpha},\varnothing\big)\in G_{\alpha,N}.
\]

It follows that
\[
p^{M\rest\alpha}\rest{N}=\big((\mathcal M_{p\rest{N}},
d_{p\rest{N}})^{N\land M\rest\alpha},w_p\rest{N}\big)\in G_{\alpha,N}.
\]

This shows that $p^{M\rest\alpha}$ is a condition in $\mathbb R^N_\alpha$, and that
$p^{M\rest\alpha}$ is $(M,\mathbb Q^\kappa_\alpha\rest  N)$-generic. Therefore, $p^{M\rest\alpha}\rest  N$ is $(M,\mathbb Q^\kappa_\alpha\cap N)$-generic, and
that 
\[
p^{M\rest\alpha}\rest{N}\Vdash``\check{p} \mbox{ is } (M[\dot{G}_{\alpha,N}],\dot{\mathbb R}^N_\alpha)\mbox{-generic}".
\]
Since $p^{M\rest\alpha}\rest  N$ belongs to $G_{\alpha, N}$, we have, in $V[G_{\alpha,N}]$,  $p$ is $(M[G_{\alpha,N}],\mathbb R^N_\alpha)$-generic.
\end{proof}

We now state our key lemma.
	
\begin{lemma}\label{Key lemma in app}
Suppose  $N\in\mathscr U$ and that $\gamma\in a(N)$.
Let $G_{\gamma, N}$ be a $V$-generic filter over $\mathbb Q^\kappa_\gamma\cap N$.
Then in $V[G_{\gamma, N}]$,  $\mathbb R^N_\gamma$ has the $\omega_1$-approximation property.
\end{lemma}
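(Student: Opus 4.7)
My plan is to follow the template of Proposition \ref{approx}, working now inside $V[G_{\gamma, N}]$, with Lemma \ref{M on top in Q by Magidor} playing the role of Proposition \ref{proper-iteration2}. Suppose $p \in \mathbb R^N_\gamma$ forces $\dot{f}: \mu \to 2$ to be countably approximated in $V[G_{\gamma, N}]$; I want to find an extension of $p$ that decides $\dot{f}$.

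First I would fix a sufficiently large regular $\lambda^*$ in $V$ and choose a countable $M_0 \prec H_{\lambda^*}^V$ containing all the relevant parameters with $M := M_0 \cap V_\lambda \in \mathscr C_{\rm st}[G_{\gamma, N}]$ (this set is stationary in $V[G_{\gamma, N}]$ by a standard argument, as hinted between \cref{project to quotient} and \cref{M on top in Q by Magidor}). By \cref{M on top in Q by Magidor} the condition $p^{M \rest \gamma}$ is $(M_0[G_{\gamma, N}], \mathbb R^N_\gamma)$-generic, so I extend it to some $q$ deciding $\dot{f} \rest (M_0 \cap \mu) = \check{g}$ for some $g \in V[G_{\gamma, N}]$. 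By \cref{decision is guessed} it then suffices to show that $g$ is guessed in $M_0[G_{\gamma, N}]$, which I will assume toward contradiction fails.

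Next, paralleling the proof of \cref{approx}, I would set $\beta = \max(\dom(w_q) \cap M_0)$ (or $\beta = \min(E)$ if this set is empty), which lies strictly below $\gamma$, and pass to an auxiliary $V[G_{\gamma, N}]$-generic filter $H$ on the relevant quotient of $\mathbb Q^\kappa_{\beta + 1}$ compatible with $q \rest (\beta + 1)$. The $\omega_1$-approximation property of $\mathbb Q^\kappa_{\beta + 1}$ over $V$ (\cref{approx}) together with \cref{approx does not add guessing function} keeps $g$ non-guessed in $M_0[G_{\gamma, N}][H]$. By elementarity, witnessed tautologically by $x \mapsto (q, g \rest x)$ for $x \in [\mu]^\omega \cap M_0[G_{\gamma, N}]$, there is an assignment $x \mapsto (q_x, g_x) \in M_0[G_{\gamma, N}][H]$ defined on $[\mu]^\omega \cap V[G_{\gamma, N}][H]$ with $q_x \in \mathbb R^N_\gamma$, $(\mathcal M_{q_x}, d_{q_x}) \leq (\mathcal M_q, d_q) \rest M$, $q_x \rest (\beta + 1) \in H$, and $q_x \Vdash \dot{f} \rest x = \check{g}_x$. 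Then \cref{Baumgartner} inside $M_0[G_{\gamma, N}][H]$ produces a cofinal $B$ with $g_x \not\subseteq g$, so that $q_x$ and $q$ are forced incompatible for every $x \in B \cap M_0[G_{\gamma, N}][H]$.

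The main obstacle, exactly as in the proof of \cref{approx}, is the amalgamation: to derive a contradiction I must verify that $q_x$ and $q$ are in fact compatible in $\mathbb R^N_\gamma$. Their side-condition parts combine via the meet of \cref{CS} (since $(\mathcal M_{q_x}, d_{q_x}) \leq (\mathcal M_q, d_q) \rest M$ and both are closed under meets through $M$); the $w$-components agree on $[0, \beta + 1)$ because $q \rest (\beta + 1), q_x \rest (\beta + 1) \in H$, and have disjoint supports on $[\beta + 1, \gamma)$ inside $M_0$ by the choice of $\beta$; and the resulting triple lies in $\mathbb R^N_\gamma$ by virtue of \cref{project to quotient} together with $q_x \in N$, which forces the $\mathbb Q^\kappa_\gamma \cap N$-restriction of the amalgamation into $G_{\gamma, N}$. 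This contradicts the incompatibility produced by \cref{Baumgartner} and completes the proof.
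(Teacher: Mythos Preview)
Your overall template is right, but there is a genuine gap at the heart of the argument: you never set up the induction on $\beta\leq\gamma$ that the paper's proof relies on, and without it the step ``$g$ remains non-guessed in $M_0[G_{\gamma,N}][H]$'' is unjustified.

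Concretely, you write that ``the $\omega_1$-approximation property of $\mathbb Q^\kappa_{\beta+1}$ over $V$ (\cref{approx}) together with \cref{approx does not add guessing function} keeps $g$ non-guessed in $M_0[G_{\gamma,N}][H]$.'' But \cref{approx does not add guessing function} needs the forcing you take $H$ over to have the $\omega_1$-approximation property \emph{over $V[G_{\gamma,N}]$}, not over $V$. The ``relevant quotient of $\mathbb Q^\kappa_{\beta+1}$'' you allude to is never specified; any natural candidate (e.g.\ $\mathbb R^N_{\beta+1}$) is exactly an instance of the lemma you are trying to prove, so invoking it is circular. The paper handles this by proving, by induction on $\beta\leq\gamma$, that $\mathbb R^N_\beta$ has the $\omega_1$-approximation property over $V[G_{\gamma,N}]$; at the inductive step one lets $\alpha=\max(\dom(w_q)\cap M)$, forces with $\mathbb R^N_\alpha$ (using the inductive hypothesis for its approximation property), and then separately treats the specialization forcing $\dot{\mathbb S}_\alpha$ via two nontrivial claims (one using the Factorization Lemma to show $(V[G^N_\alpha],V[G^N_\alpha][G_{\gamma,N}])$ has the approximation property, the other using \cref{spec-poset-approx}). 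None of this structure is present in your outline.

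A secondary issue is the amalgamation. The claim that the amalgam of $q_x$ and $q$ lies in $\mathbb R^N_\gamma$ needs more than ``$q_x\in N$'' (and in fact $q_x\in M$, not necessarily $q_x\in N$). The paper uses both a common extension $r\in G^N_\alpha$ of $q_x\rest\alpha$ and $q\rest\alpha$ \emph{and} a common extension $s\in G_{\beta,N}$ of $q_x\rest N$ and $q\rest N$, strips the $w$-part appropriately, and then applies the Factorization Lemma (\cref{project to quotient}) to produce $t\leq q_x,q$ with $t\rest N\in G_{\beta,N}$. Your sketch via \cref{CS} and disjoint supports does not account for why the restriction to $N$ lands in $G_{\gamma,N}$.
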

\begin{proof}
We will prove by  induction on $\beta\leq \gamma$ that  $\mathbb R^N_\beta$ has the $\omega_1$-approximation property
over $V[G_{\gamma, N}]$, where in the definition of $\mathbb R^N_\beta$, we use $G_{\gamma,N}\cap \mathbb Q^\kappa_\beta$ as the generic filter on $\mathbb Q^\kappa_\beta\cap N$.
Thus we fix $\beta$ and assume that  the lemma holds for every ordinal in $E\cap\beta$.

Suppose that $\dot f$ is an $\mathbb R^N_\beta$-term forced by a condition $p$ to be a function on an ordinal $\mu$  that is  countably approximated  in $V[G_{\gamma, N}]$. We look for an exetnsion of $p$ that decides $\dot{f}$. We first choose a countable model  $M$ elementary in  $V_\delta$ with $\delta>\lambda$ such that $E,\beta,\gamma, N, p, \mu$ and $\dot f$ belong to $M$, and that
\[
\big(N\land (M\cap V_\lambda)\big)\rest  \gamma\in \mathcal M_{G_{\gamma, N}}.
\]

Let also
\[
M_\gamma\coloneqq(M\cap V_\lambda)\rest \gamma~~~\mbox{ and }~~~M_\beta\coloneqq M_\gamma\rest \beta.
\]
Using \cref{M on top in Q by Magidor}, we can  extend $p$ to a condition  $p^{M_\beta}$  in $\mathbb R^N_\beta$  so that $M_\beta\in\mathcal M_{p^{M_\beta}}$. Since $p$ forces that $\dot f$ is  $\omega_1$-approximated in $V[G_{{\gamma},N}]$, there is a condition
$q\in\mathbb R^N_\beta$ with  $q\leq p^{M_\beta}$ which decides the values of $\dot f\rest  M\cap \mu$.
Therefore, there exists a fucntion $g:M\cap\mu\rightarrow 2$~ such that
$$q\Vdash ``\check{g}=\dot f\rest  M\cap \mu".$$

 \cref{subcomplete R,M on top in Q by Magidor} ensures that $p^{M_\beta}$  is a$(M_\gamma[G_{\gamma,N}],\mathbb R^N_\beta)$-generic condition, and thus $(M[G_{\gamma,N}],\mathbb R^N_\beta)$-generic. On the other hand, by \cref{decision is guessed}, we are done if $g$ is guessed in $M[G_{\gamma, N}]$. 
Therefore, we may assume towards a contradiction that $g$ is not guessed in $M[G_{\gamma, N}]$, and we let 
\[
\alpha\coloneqq{\rm max}({\rm dom}(w_q)\cap M).
\] 
Assume that  $G^N_\alpha$ is a $V[G_{\gamma,N}]$-generic filter on $ \mathbb R^N_\alpha$ containing $q\rest  \alpha$, where
again we use \[
G_{\alpha,N}=G_{\gamma,N}\cap\mathbb Q^\kappa_\alpha
\] 
to form the quotient forcing $\mathbb R^N_\alpha$.
Notice that	\cref{project to quotient,M on top in Q by Magidor} imply that $q\rest  \alpha$  is $(M[G_{\gamma,N}],\mathbb R^N_\alpha)$-generic, and on the other hand  by the inductive hypothesis, $\mathbb R^N_\alpha$  has the $\omega_1$-approximation property over $V[G_{\gamma,N}]$. Consequently,  the function $g$ is not  guessed in $M[G_{\gamma,N}][G^N_\alpha]$ by \cref{Baumgartner}. 
Since $G^N_\alpha$ is a $V[G_{\gamma,N}]$-generic filter on $\mathbb R^N_\alpha$, the factorization lemma (\cref{project to quotient}) over $V[G_{\alpha,N}]$ yields the following equality:
\[
V[G_{\alpha,N}][G_{\gamma, N}][G^N_\alpha]=V[G_{\alpha,N}][G^N_\alpha][G_{\gamma,N}].
\]
 Note that $G^N_\alpha$ is a $V$-generic filter on $\mathbb Q^\kappa_\alpha\restriction N$ and $G_{\gamma,N}$ is a $V$-generic filter on $\mathbb Q^\kappa_\gamma\cap N$, and hence we have
\[
V[G_{\gamma, N}][G^N_\alpha]=V[G^N_\alpha][G_{\gamma,N}].
\]

\begin{claim}\label{claim11}
The pair $(V[G^N_\alpha],V[G^N_\alpha][G_{\gamma,N}])$ has the $\omega_1$-approximation property.
\end{claim} 
\begin{proof}
Let $G^N_\gamma$ be a $V[G^N_\alpha][G_{\gamma,N}]$-generic filter on $\mathbb R^N_\gamma$. Notice that both
$G^N_\alpha$ and $G^N_\gamma$ are also  $V$-generic filters on $\mathbb Q^\kappa_\gamma$ and $\mathbb Q^\kappa_\alpha$, respectively.
Now, \cref{level by level approx} implies that the pair 
\[(V[G^N_\alpha],V[G^N_\gamma])
\]
has the $\omega_1$-approximation property, 
and so does  the pair $(V[G^N_\alpha],V[G^N_\alpha][G_{\gamma,N}])$.
\end{proof}
		
Note that if $\alpha\in N$, then $\dot{\mathbb S}(\dot{T}_\alpha)$ is already interpreted by $G_{\alpha,N}$, and hence by $G_{\gamma,N}$ and moreover there is a generic filter on ${\rm val}_{G_{\gamma,N}}(\dot{\mathbb S}(\dot{T}_\alpha))$ in $V[G_{\gamma,N}]$. Thus in this case, the conditions in $\dot{\mathbb S}(\dot{T}_\alpha)$ cannot prevent us from amalgamating conditions in $\mathbb R^N_\beta$ (i.e., if $p'$ and $q'$ are conditions in $\mathbb R^N_\beta$ with $\alpha\in {\rm dom}(w_{p'})\cap{\rm dom}(w_{q'})$, we do know that $w^{G_{\gamma, N}}_{p'}(\alpha)$ and $w^{G_{\gamma,N}}_{q'}(\alpha)$ are compatible.)

For the sake of simplicity, we make the following convention.
\begin{convention}
We let $\mathbb Q$ be the interpretation of $\dot{\mathbb S}(\dot{T}_\alpha)$ under $G^N_\alpha$  if $\alpha$ is not in $N$, and  let it be the trivial forcing otherwise. We also let,  for every $p'\in\mathbb R^N_\beta$, $z_{p'}=w^{G^N_\alpha}_{p'}(\alpha)$ if $\alpha\notin N$,  and  let it be a canonical $\mathbb R^N_\alpha$-name in $N$ for the  maximal condition of the trivial forcing otherwise.
\end{convention}

\begin{claim}\label{claim12}
The forcing $\mathbb Q$ has the $\omega_1$-approximation property over
$V[G_{\gamma,N}][G^N_\alpha]$, and  $z_q$ is 
$(M[G_{\gamma,N}][G^N_\alpha],\mathbb Q)$-generic.
\end{claim}
\begin{proof}
We may assume that $\alpha\notin N$ and that $\mathbb Q$ is nontrivial; thus $\mathbb Q$ is the specializing forcing  of a tree $T\in V[G^N_\alpha]$ of size and height $\omega_1$ without any cofinal branches. By \cref{claim11}, the tree $T$ is still of height $\omega_1$ and has no cofinal branches in $V[G^N_\alpha][G_{\gamma,N}]$.  On the other hand,     $V[G^N_\alpha][G_{\gamma,N}]=V[G_{\gamma,N}] [G^N_\alpha]$.
Thus  in $V[G_{\gamma,N}][G^N_\alpha]$,    $\mathbb Q$ is
a c.c.c. forcing with the $\omega_1$-approximation property. Therefore, 
$z_q$ is  $(M[G_{\gamma,N}][G^N_\alpha],\mathbb Q)$-generic.
\end{proof}
		
Returning to the main body of the proof, let $H$ be a $V[G_{\gamma,N}][G^N_\alpha]$-generic filter on $\mathbb Q$ containing $z_q$. We now work in the model $V[G_{\gamma,N}][G^N_\alpha][H]$ for the rest of the proof. By  \cref{claim12,approx does not add guessing function} and the fact that we have assumed earlier  that $g$
 is not guessed in $M[G_{\gamma,N}][G^N_\alpha]$, we have that $g$ is not guessed in 
\[
M^*\coloneqq M[G_{\gamma,N}][G^N_\alpha][H].
\]

Note that for every $x\in M^*\cap [\mu]^\omega$,  $x\mapsto (q,g)$ witnesses the  conditions below, and so  a mapping $x\mapsto (q_x,g_x)$ on $[\mu]^{\omega}$ exists in $M^*$ 
such that:  
		
\begin{enumerate}
\item $q_x\in \mathbb Q^\kappa_\beta\rest  N$,
\item $(\mathcal M_{q_x},d_{q_x})\leq(\mathcal M_p,d_p)\rest  M$ in $\mathbb P^\kappa_\beta$,
\item $M\cap {\rm dom}(w_q)\subseteq{\rm dom}(w_{q_x})$,
\item $q_x\rest  \alpha\in G^N_\alpha$,
\item $z_{q_x}\in H$
\item $q_x\rest  N\in G_{ \beta,N}$, and
\item $g_x$ is a function with $x\subseteq {\rm dom}(g_x)$ such that $q_x\Vdash ``g_x\rest  x=\dot{f}\rest  x"$.

\end{enumerate}

Since  $g$ is not guessed in $M^*$,
by \cref{Baumgartner},  there is a set $B\in M^*$ cofinal in $[\mu]^\omega$ such that for every  $x\in B\cap M^*$, $g_x\nsubseteq g$. Therefore, the conditions $q_x$ and $q$  are incompatible in $\mathbb R^\beta_N$, for every $x\in B\cap M^*$. 

We now draw a contradiction using the following claim. 
\begin{claim}
For every $x\in B\cap M^*$, $q_x$ and $q$ are compatible. 
\end{claim}
\begin{proof}
Fix $x\in B\cap M^*$. Thus $q_x\in M^*$, and hence 
 $q_x\in M$ by \cref{project to quotient,M on top in Q by Magidor,claim12}. The conditions (1)-(5) above allow us to use \cref{amalgamation} to find a common extension of $q_x$ and $q$ in $\mathbb Q^\kappa_\beta\rest  N$, say $\bar{q}_0$. However, we need to show that $q_x$ and $q$ are compatible in $\mathbb R^N_\beta$.
Let   $r\in G^N_\alpha$ be a common extension of $q_x\rest  \alpha$ and $q\rest  \alpha$, and let
also $s\in G_{ \beta,N}$ be a common extension of $q_x\rest  N$ and $q\rest  N$.
 Set
$$d=\{\xi\in {\rm dom}(w_{\bar{q}_0}): \alpha\leq \xi\notin N\},$$
		
and let
\[
\bar{q}_1\coloneqq(\mathcal M_{\bar{q}_0},d_{\bar{q}_0}, w_{\bar{q}_0}\rest  d).
\] 
Notice that $\bar{q}_1\in\mathbb Q^\kappa_\beta\rest  N$, and that 
\[
(r,s)\leq (\bar{q}_1\rest  \alpha,\bar{q}_1\rest  N).
\]
We now apply \cref{project to quotient} to $(r,s)$ and $\bar{q}_1$
to  find a condition $\bar{q}\in\mathbb Q^\kappa_\beta\rest  N/G_{\alpha,N}$
 with $\bar{q}\leq r,s,\bar{q}_1$ such that $\bar{q}\rest  N\in G_{\beta,N}$. 
Note that $\bar{q}\leq q_x,q$ in $\mathbb Q^\kappa_\beta\rest  N$ but $\bar{q}$ is in $\mathbb R^N_\beta$, and hence $q$ and $q_x$
are compatible in $\mathbb R^N_\beta$, as required.
\end{proof} 
\end{proof}

\section{The proof of theorem \ref{main-theorem}}
Suppose $\kappa <\lambda$ are supercompact cardinals.
Consider the structure $V_\lambda=(V_\lambda,\in,\kappa, U)$, where $U$ is a suitable bookkeeping function  enumerating all $\mathbb Q^\kappa_\lambda$-terms in $V_\lambda$ which are forced to be  trees of size and height $\omega_1$ without cofinal branches.
Let $G_\lambda\subseteq\mathbb Q^\kappa_\lambda$  be a $V$-generic filter.
Since $\mathbb Q^\kappa_\lambda$ is $\lambda$-c.c,  every tree of size and height $\omega_1$ that has no cofinal branches is special in $V[G_\lambda]$. Recall that by Baumgartner's \cite[Theorem 7.5]{Baumgartnersurvey}, if every tree of height and size $\omega_1$ that has no  branches
of length $\omega_1$ is special, then every tree of height and size $\omega_1$ that has at most $\omega_1$ cofinal branches is weakly special.
Thus, by \cref{tree-guessing}, to show that ${\rm SGM}^+(\omega_3,\omega_1)$ holds in $V[G_\lambda]$, it suffices to show that
${\rm GM }^+(\omega_3,\omega_1)$ holds in
$V[G_\lambda]$. Indeed, we show that the $\omega_1$-guessing models of size $\omega_1$ witnessing 
${\rm GM }^+(\omega_3,\omega_1)$ are internally club.
\begin{lemma}\label{Magidor-guessing}
Let $\alpha \in E$. Suppose that $N\in \M_{G_\lambda}$ is a Magidor model with $\alpha \in N$.
Then $N[G_\alpha]$  is an $\omega_1$-guessing model in $V[G_\lambda]$. 
\end{lemma}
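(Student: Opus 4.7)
The plan is to invoke the Cox--Krueger equivalence (Theorem \ref{guessing vs approx}) and prove that the pair $(\overline{N[G_\alpha]}, V[G_\lambda])$ has the $\omega_1$-approximation property. First, I observe that $N[G_\alpha] = N[G_{\alpha,N}]$, where $G_{\alpha,N} \coloneqq G_\alpha \cap \mathbb Q^\kappa_{\alpha,N}$: any $\mathbb Q^\kappa_\alpha$-name that lies in $N$ is already a name in $\mathbb Q^\kappa_{\alpha,N}$, and its evaluation depends only on $G_\alpha \cap N$. I then decompose the extension through the chain $V[G_{\alpha,N}] \subseteq V[G_\alpha] \subseteq V[G_\lambda]$ and apply the transitivity of $\omega_1$-approximation (Theorem \ref{transitivity of approx}) repeatedly.

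For the two outer links of the chain, Corollary \ref{level by level approx} handles $(V[G_\alpha], V[G_\lambda])$, and the Factorization Lemma \ref{project to quotient} identifies $V[G_\alpha]$ with $V[G_{\alpha,N}][H]$ for $H$ a $V[G_{\alpha,N}]$-generic filter on the quotient $\mathbb R^N_\alpha$, whereupon the Key Lemma \ref{Key lemma in app} applied at $\gamma = \alpha$ yields the $\omega_1$-approximation for $(V[G_{\alpha,N}], V[G_\alpha])$. Composing via Theorem \ref{transitivity of approx}, the pair $(V[G_{\alpha,N}], V[G_\lambda])$ has the $\omega_1$-approximation property. It therefore remains to establish that $(\overline{N[G_{\alpha,N}]}, V[G_{\alpha,N}])$ has $\omega_1$-approximation, equivalently, by Theorem \ref{guessing vs approx} again, that $N[G_{\alpha,N}]$ is $\omega_1$-guessing in $V[G_{\alpha,N}]$.

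For this last step I will exploit the Magidor structure of $N$: the transitive collapse $\pi$ identifies $\overline N$ with $V_{\bar\gamma}$ for some $\bar\gamma < \kappa$ with $\cof(\bar\gamma) \geq \kappa_N$, and sends $\mathbb Q^\kappa_{\alpha,N}$ to the poset $\mathbb Q^{\kappa_N}_{\pi(\alpha)}$ computed inside $V_{\bar\gamma}$. Since $\kappa_N$ is supercompact in $V_{\bar\gamma}$ by elementarity, Proposition \ref{approx} applied internally in $V_{\bar\gamma}$ shows that this inner forcing has the $\omega_1$-approximation property in $\overline N$. Combined with the fact that $(\overline N, V)$ has the $\omega_1$-approximation property, because $N$ is Magidor and hence $\omega_1$-guessing in $V$, translated through Theorem \ref{guessing vs approx}, a preservation argument for forcing by a poset in the smaller model lifts the $\omega_1$-approximation to the pair of generic extensions $(\overline N[\pi(G_{\alpha,N})], V[G_{\alpha,N}])$. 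Since $\overline{N[G_{\alpha,N}]} = \overline N[\pi(G_{\alpha,N})]$, this is exactly what we want.

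The main obstacle I expect is this last preservation step: transferring the $\omega_1$-approximation property from the ground-model pair $(\overline N, V)$ to the pair of generic extensions. My plan is to mirror the amalgamation strategy of Proposition \ref{approx} and Lemma \ref{Key lemma in app}: take an $\omega_1$-approximated function $x \in V[G_{\alpha,N}]$, fix a $\mathbb Q^\kappa_{\alpha,N}$-name $\dot x$ in $V$, restrict to a countable elementary submodel with $N$ placed on top via Proposition \ref{topMiteration}, and argue via an amalgamation of conditions from $\mathbb Q^\kappa_{\alpha,N}$ that the assumption ``$\dot x$ is not decided into $\overline N[\pi(\dot G_{\alpha,N})]$'' leads to a contradiction, invoking Lemma \ref{Baumgartner} and the $\omega_1$-approximation of the ground pair $(\overline N, V)$ to control the names.
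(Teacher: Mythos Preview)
Your decomposition through the chain $V[G_{\alpha,N}]\subseteq V[G_\alpha]\subseteq V[G_\lambda]$ and the use of \cref{Key lemma in app} and \cref{level by level approx} match the paper exactly. The divergence is entirely in your ``last step'', namely showing that the pair $(\overline{N[G_{\alpha,N}]},V[G_{\alpha,N}])$ has the $\omega_1$-approximation property, and here you are working much harder than necessary.

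You already record that $\overline N=V_{\bar\gamma}$ and that $\pi$ carries $\mathbb Q^\kappa_{\alpha,N}$ to a small poset $\mathbb P\in V_{\bar\gamma}$. The point you are missing is that this immediately gives
\[
\overline{N[G_{\alpha,N}]}\;=\;V_{\bar\gamma}[\pi(G_{\alpha,N})]\;=\;V_{\bar\gamma}^{\,V[G_{\alpha,N}]},
\]
i.e.\ the collapse of $N[G_{\alpha,N}]$ is a \emph{rank-initial segment} of $V[G_{\alpha,N}]$. For such a pair the approximation property is vacuous: any subset of an element of $V_{\bar\gamma}^{\,V[G_{\alpha,N}]}$ already lies in $V_{\bar\gamma}^{\,V[G_{\alpha,N}]}$. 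This is exactly how the paper dispatches the step in one line (``Clearly, the pair $(\overline{N[G_{\alpha,N}]},V[G_{\alpha,N}])$ has the $\omega_1$-approximation property''). No internal application of \cref{approx} in $V_{\bar\gamma}$, no lifting argument, and no amalgamation in the style of \cref{Key lemma in app} is needed. Your proposed preservation argument may well be salvageable, but it is solving a problem that does not exist.

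Two smaller remarks. First, you should say explicitly why $G_{\alpha,N}=G_\alpha\cap N$ is $V$-generic over $\mathbb Q^\kappa_{\alpha,N}$: this is because some $p\in G_\alpha$ has $N\rest\alpha\in\mathcal M_p$, and by \cref{U-strong} such $p$ is $(N,\mathbb Q^\kappa_\alpha)$-strongly generic. Second, the paper first replaces $N$ by $N\rest\alpha$, which makes the identification $\mathbb Q^\kappa_\alpha\cap N=\mathbb Q^\kappa_\lambda\cap N$ clean; this is harmless since the projection is the identity on $\mathbb Q^\kappa_\alpha\cap N$, but it streamlines the argument.
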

	
\begin{proof} 
Note that  the projection  $N \mapsto N\rest  \alpha$ is an isomorphism 
 and is the identity on  $\mathbb Q^\kappa_\alpha\cap N$.  Thus, $N[G_\alpha]$ and $(N \rest  \alpha) [G_\alpha]$ are also isomorphic. Therefore, by replacing $N$ with $N\rest  \alpha$ we may assume that it is an $\alpha$-model.  Let $\overline{N}$ be the transitive collapse of $N$, and let $\pi$ be the collapse map.  For convenience, let us write $\bar\kappa$ for $\kappa_N$.  Then $\overline{N}=V_{\bar\gamma}$, for some  $\bar\gamma$ with $\cof(\bar\gamma)\geq \bar\kappa$  and $\pi(\kappa)= \bar\kappa$. Let $\bar\alpha= \pi(\alpha)$.  Since $\alpha \in N$ and $N$ is an $\alpha$-model we have 
\[
\mathbb Q_N= \mathbb Q^\kappa_\lambda \cap N =
\mathbb Q^\kappa_\alpha \cap N.\]
Let   $\mathbb Q^{\bar\kappa}_{\bar\alpha}= \pi [\mathbb Q^\kappa_\alpha \cap N]$, and let $p\in G_\alpha$ be such that $N\in \M_p$. By \cref{U-strong} $p$ is $(N,\mathbb Q^\kappa_\alpha)$-strongly generic. 
Consequently,  $G_{\alpha,N}= G_\alpha \cap N$ is $V$-generic over $\mathbb Q^\kappa_\alpha \cap N$. 
It follows that  \[
G^{\bar\kappa}_{\bar\alpha}= \pi [G_{\alpha,N}]\]
is $V$-generic over $\mathbb Q^{\bar\kappa}_{\bar\alpha}$.
Note that  
\[
\overline{N[G_{\alpha,N}]}=
V_{\bar\gamma}[G^{\bar\kappa}_{\bar\alpha}]= 
V_{\bar\gamma}^{V[G_{\alpha,N}]},\]
which belongs to $V[G_{\alpha,N}]$.
It is clear that the pair $(\overline{N[G_{\alpha,N}]},V[G_{\alpha,N}])$ has the $\omega_1$-approximation property. 
On the other hand, by \cref{Key lemma in app},  the quotient forcing $\mathbb R^N_\alpha $ has the $\omega_1$-approximation property, and thus, by \cref{quotient-approximation-5.35}, the pair $(V[G_{\alpha,N}], V[G_\lambda])$ has the $\omega_1$-approximation property.
By \cref{transitivity of approx}, the pair
\[
(\overline{N[G_{\alpha,N}]},V[G_\lambda])
\]
has the $\omega_1$-approximation property. Now \cref{guessing vs approx} implies that $N[G_{\alpha,N}]$ is an $\omega_1$-guessing model in $V[G_\lambda]$. 
\end{proof}

One can show with a similar proof that if $\mu > \lambda$ and $N\prec V_\mu$ is a $\kappa$-Magidor model containing all the relevant parameters. Then $N[G_\lambda]$ is an $\omega_1$-guessing model in $V[G_\lambda]$.  We now prove that for all $\mu >\lambda$ the set of strongly $\omega_1$-guessing  models is stationary in ${\mathcal P}_{\omega_3}(V_\mu[G_\lambda])$. 
	
\begin{lemma}\label{lambda-Magidor-guessing}
Suppose that $\mu > \lambda$ and $N\prec V_\mu$ is a $\lambda$-Magidor model
containing all the relevant parameters. Then $N[G_\lambda]$ is a strongly $\omega_1$-guessing  model. 
\end{lemma}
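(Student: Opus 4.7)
The plan is to express $N[G_\lambda]$ as an $\in$-increasing continuous union of $\omega_1$-guessing models of size $\omega_1$, each of the form $P[G_\lambda]$ for a $\kappa$-Magidor model $P\in\mathcal M_{G_\lambda}$ contained in $N$, and then invoke \cref{Magidor-guessing} on each piece.

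Let $\pi\colon N\to\bar N = V_{\bar\gamma}$ be the collapse, $\bar\lambda=\pi(\lambda)$, $\bar\kappa=\pi(\kappa)$. Since $V_{\bar\lambda}\subseteq N$ and the relevant parameters (notably $\kappa$) lie in $N$, we have $\bar\kappa=\kappa$ and $\bar\lambda\geq\kappa$, so $\cof^V(\bar\gamma)\geq\bar\lambda$; as $\mathbb Q^\kappa_\lambda$ collapses cardinals in $[\kappa,\lambda)$ to $\kappa$, this gives $\cof^{V[G_\lambda]}(\bar\gamma)=\omega_2$ and $|N[G_\lambda]|^{V[G_\lambda]}=\omega_2$. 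Strong properness (\cref{U-strong}) ensures that $G_\lambda\cap N$ is $V$-generic over $\mathbb Q^\kappa_\lambda\cap N$, hence $\bar G:=\pi[G_\lambda\cap N]$ is a $V$-generic filter over $\mathbb Q^{\bar\kappa}_{\bar\lambda}$, and $\overline{N[G_\lambda]}=V_{\bar\gamma}[\bar G]$.

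In $V[G_\lambda]$ fix a continuous cofinal sequence $\langle\bar\delta_\xi:\xi<\omega_2\rangle$ in $\bar\gamma$ through $E_{V_{\bar\gamma}}$, with $\bar\delta_\xi>\bar\lambda$ at successor stages. Invoking the stationarity of $\kappa$-Magidor models (\cref{Magidor-stationary}) inside $N$, together with the chain and continuity properties of $\mathcal M_{G_\lambda}^\delta$ (Propositions \cref{chainfilter} and \cref{continuity-Magidor}), recursively build an $\in$-increasing chain $\langle P_\xi:\xi<\omega_2\rangle$ of $\kappa$-Magidor models in $\mathcal M_{G_\lambda}\cap N$ with $\pi[P_\xi]$ cofinal in $V_{\bar\delta_\xi}$, continuous at cofinalities $\omega_1$. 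Set $M_\xi:=P_\xi[G_\lambda]=P_\xi[G_{\eta(P_\xi)}]$; by \cref{Magidor-guessing}, $M_\xi$ is an $\omega_1$-guessing model of cardinality $\omega_1$ in $V[G_\lambda]$. Cofinality of the $\bar\delta_\xi$'s in $\bar\gamma$ yields $\bigcup_\xi M_\xi=N[G_\lambda]$; the chain is $\in$-increasing (as $P_\xi\in P_\eta$ for $\xi<\eta$) and continuous at cofinality $\omega_1$, which is the required decomposition.

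The main obstacle lies in constructing the chain $\langle P_\xi\rangle$: ensuring each $P_\xi$ is $\kappa$-Magidor, belongs to $\mathcal M_{G_\lambda}$, is contained in $N$, and that the full sequence is $\in$-increasing, cofinal in $N$, and $\subseteq$-continuous at cofinalities $\omega_1$. This requires careful use of density and closure properties of $\mathcal M_{G_\lambda}$ inside $N$: at successors, stationarity of Magidor models together with density in the generic filter; at limits of cofinality less than $\omega_1$, the continuity from \cref{continuity-Magidor}; at limits of cofinality $\omega_1$, taking unions and verifying the result remains a $\kappa$-Magidor model in $\mathcal M_{G_\lambda}$ by using that the generic filter is closed under the corresponding $\omega_1$-directed limit.
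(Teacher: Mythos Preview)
Your overall plan---decompose $N[G_\lambda]$ as an increasing chain of $\omega_1$-guessing models and invoke \cref{Magidor-guessing}---is the right idea, but the construction you outline cannot work as written, for a structural reason.

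Because $N$ is a $\lambda$-Magidor model, $N\cap V_\lambda = V_{\bar\lambda}$ where $\bar\lambda = N\cap\lambda$ (and $\pi$ is the identity there). Hence any $P\in\mathcal M_{G_\lambda}\cap N$ satisfies $P\subseteq V_{\bar\lambda}$ and $\pi[P]=P$. You then ask that $\pi[P_\xi]$ be cofinal in $V_{\bar\delta_\xi}$ for $\bar\delta_\xi>\bar\lambda$; this is impossible. More to the point, the union $\bigcup_\xi P_\xi[G_\lambda]$ is contained in $V_{\bar\lambda}[G_\lambda]$, whereas $N[G_\lambda]$ has elements of rank up to~$\mu$. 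So the chain you build cannot exhaust $N[G_\lambda]$. A second, smaller issue: \cref{U-strong} concerns $\kappa$-Magidor models (the class $\mathscr U$), not $\lambda$-Magidor models; the genericity of $G_\lambda\cap N$ over $\mathbb Q^\kappa_\lambda\cap N$ holds simply because $\mathbb Q^\kappa_\lambda\cap N=\mathbb Q^\kappa_{\bar\lambda}$ is a complete suborder.

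The paper's proof avoids these difficulties by passing immediately to the transitive collapse $\overline{N[G_\lambda]}=V_{\bar\gamma}[G_{\bar\lambda}]$ and then working \emph{outside}~$N$: it fixes a single level $\delta\in E$ with $\delta>\bar\gamma$, and uses the already-existing $\delta$-chain $\mathcal M_G^\delta$. The building blocks are $(M\cap V_{\bar\gamma})[G_{\bar\lambda}]$ for Magidor models $M\in\mathcal M_G^\delta$ with $\bar\lambda\in M$; by \cref{Magidor-guessing} each such $M[G_{\bar\lambda}]$ is $\omega_1$-guessing, the union over all such $M$ recovers $V_{\bar\gamma}$, and the continuity at $\omega_1$-limits comes directly from \cref{continuity-Magidor} applied once to $\mathcal M_G^\delta$. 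There is no recursion, no varying generics $G_{\eta(P_\xi)}$, and no need to produce Magidor models inside~$N$.
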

	
\begin{proof} Since $N$ is a $\lambda$-Magidor model, its transitive collapse $\overline{N}$  equals $V_{\bar\gamma}$, for some $\bar\gamma < \lambda$. Let $\bar\lambda = N\cap \lambda$, which is in $E$.  Note that $\cof(\bar\lambda)\geq \kappa$, and hence the transitive collapse $\overline{N[G_\lambda]}$  of $N[G_\lambda]$ equals $V_{\bar\gamma}[G_{\bar\lambda}]$. On the other hand,  the pair $(V[G_{\bar\lambda}],V[G_\lambda])$ has the $\omega_1$-approximation property by \cref{level by level approx}.  Therefore, $V_{\bar\gamma}[G_{\bar\lambda}]$ and  hence also $N[G_\lambda]$ remains an $\omega_1$-guessing model in $V[G_\lambda]$. To see that $V_{\bar\gamma}[G_{\bar\lambda}]$   is a strongly $\omega_1$-guessing model,  fix some $\delta \in E$ with $\delta > \bar\gamma$ and ${\rm cof}(\delta)<\kappa$.  Note that if $M\in \M^\delta_{G_\lambda}$ is a Magidor model with $\bar\lambda \in M$, then by \cref{Magidor-guessing} $M[G_{\bar\lambda}]$ is an $\omega_1$-guessing model. Moreover, if $M\in \M_{G_\lambda}^\delta$ is a limit of such Magidor models then by \cref{continuity-Magidor},
\[
M\cap V_{\delta} = \bigcup \{ Q\cap V_\delta : Q\in_\delta M \mbox{ and } Q\in \M_{G_\lambda}^\delta\}.
\]
Hence if we let $\mathcal G$ be the collection of the models $(M\cap V_{\bar\gamma})[G_{\bar\lambda}]$, for Magidor models $M\in \M_{G_\lambda}^\delta$ with $\bar\lambda \in M$, then $\mathcal G$ is an $\in$-increasing chain of length $\omega_2$ which is continuous at $\omega_1$-limits and whose union is $V_{\bar\gamma}[G_{\bar\lambda}]$. Note that every model of size $\omega_1$ in this chain is a continuous union of an I.C. sequence. Therefore  $N[G_\lambda]$ is a  strongly $\omega_1$-guessing model in $V[G_\lambda]$, as required. 
\end{proof}

The following corollary is immediate and concludes the proof of \cref{main-theorem}.
\begin{corollary}
The principle ${\rm SGM}^+(\omega_3,\omega_1)$ holds in $V[G_\lambda]$. 
\end{corollary}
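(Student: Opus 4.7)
The plan is to combine the two preceding lemmas with the specializing effect of the iteration, observing that the two ingredients of $\mathrm{SGM}^+$---strong $\omega_1$-guessing and indestructibility---now come essentially for free from separate features of the construction.

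For stationarity, I would work in $V$ and fix a sufficiently large $\mu>\lambda$. By \cref{Magidor-stationary} the set $\mathcal S$ of $\lambda$-Magidor models $N\prec V_\mu$ containing all the relevant parameters is stationary in $\mathcal P_{\lambda}(V_\mu)$. Since $\mathbb Q^\kappa_\lambda$ is $\lambda$-c.c.\ (see \cref{lambdacc-iteration}) and $\lambda$ becomes $\omega_3$ in the extension, this set remains stationary in $V[G_\lambda]$. By \cref{lambda-Magidor-guessing}, every $N\in\mathcal S$ gives rise to a strongly $\omega_1$-guessing model $N[G_\lambda]$ of size $\kappa=\omega_2$ in $V[G_\lambda]$ which is an elementary submodel of the relevant $H_\theta^{V[G_\lambda]}$. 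This produces a stationary family of strongly $\omega_1$-guessing submodels of size $\omega_2$.

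For indestructibility, I would use the standing property that the bookkeeping predicate $U$ enumerates every $\mathbb Q^\kappa_\lambda$-name for a tree of height $\omega_1$ without cofinal branches, so that such a tree is specialized at the appropriate stage of the iteration via Baumgartner's forcing. Consequently, in $V[G_\lambda]$ every tree of size and height $\omega_1$ without cofinal branches is special, and by Baumgartner's result (\cite[Theorem 7.5]{Baumgartnersurvey}, quoted in the Preliminaries) every tree of size and height $\omega_1$ with at most $\omega_1$ cofinal branches is weakly special. Now let $M=N[G_\lambda]$ and write it as the chain $\langle M_\xi : \xi<\omega_2\rangle$ of $\omega_1$-guessing models of size $\omega_1$ given by \cref{lambda-Magidor-guessing}. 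For each $\xi$ the tree $T(M_\xi)$ produced by \cref{tree-guessing} has at most $\omega_1$ cofinal branches, hence is weakly special in $V[G_\lambda]$; \cref{tree-guessing} then yields that $M_\xi$ is indestructibly $\omega_1$-guessing. Therefore $M$ is indestructibly strongly $\omega_1$-guessing, which is exactly what $\mathrm{SGM}^+(\omega_3,\omega_1)$ demands.

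The step I expect to require the most care is verifying that the chain witnessing strong guessing in \cref{lambda-Magidor-guessing} really produces intermediate models $M_\xi$ of size $\omega_1$ whose associated trees $T(M_\xi)$ were foreseen by the bookkeeping; this is a routine bookkeeping issue, but it is the only place where the two separate arguments (stationarity via Magidor models and indestructibility via tree specialization) must be glued together. Once that is observed, the corollary is immediate.
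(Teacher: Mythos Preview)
Your proposal is correct and follows essentially the same route as the paper: the corollary is recorded there as immediate from \cref{lambda-Magidor-guessing} together with the observation (stated at the start of \S6) that the bookkeeping plus the $\lambda$-c.c.\ makes every tree of size and height $\omega_1$ without cofinal branches special in $V[G_\lambda]$, so that indestructibility comes for free via \cref{tree-guessing}. One small clarification on your last paragraph: there is no need to check that the particular trees $T(M_\xi)$ were ``foreseen''---the point is simply that \emph{every} such tree in $V[G_\lambda]$ is special, so the gluing you flag is entirely trivial.
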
 
\begin{proof}[\nopunct]
\end{proof}

\begin{remark}
The decoration component in the forcing conditions plays no role in our  main theorem. However, without additional effort, one can easily show that the $\omega_1$-guessing models of size $\omega_1$ witnessing the truth of ${\rm SGM}^+(\omega_3,\omega_1)$ in the final model are internally club.   
\end{remark}

\subsubsection*{Acknowledgements} The authors are grateful to the reviewer for his/her comments and suggestions.
		
\bigskip
\bibliographystyle{abbrv}
\bibliography{output}
\end{document}